\documentclass[12pt,a4paper]{amsart}
\usepackage[top=35mm, bottom=32mm, left=30mm, right=30mm]{geometry}
\usepackage{mathptmx}
\usepackage{mathrsfs}

\usepackage{verbatim}
\usepackage{url}
\usepackage[all]{xy}
\usepackage{xcolor}
\usepackage[colorlinks=true,citecolor=blue]{hyperref}

\theoremstyle{plain}
\newtheorem{claim}{Claim}
\newtheorem{thm}{Theorem}[section]
\newtheorem{lem}[thm]{Lemma}
\newtheorem{prop}[thm]{Proposition}
\newtheorem{cor}[thm]{Corollary}

\theoremstyle{definition}

\newtheorem{exmp}[thm]{Example}
\newtheorem{rem}[thm]{Remark}
\newtheorem{ques}[thm]{Question}

\newcommand{\Z}{{\mathbb{Z}_+}}
\newcommand{\N}{\mathbb{N}}

\newcommand{\mZ}{\mathbb Z}

\newcommand{\ep}{\varepsilon}

\DeclareMathOperator{\diam}{diam}
\DeclareMathOperator{\supp}{supp}

\begin{document}
\title{Mean equicontinuity and mean sensitivity}
\author[J. Li]{Jian Li}
\address[J.~Li]{Department of Mathematics, Shantou University, Shantou, Guangdong 515063, P.R. China}
\email{lijian09@mail.ustc.edu.cn}

\author[S.~Tu]{Siming Tu}
\address[S.~Tu]{Wu Wen-Tsun Key Laboratory of Mathematics, USTC, Chinese Academy of Sciences and
School of Mathematics, University of Science and Technology of China,
Hefei, Anhui, 230026, P.R. China}
\email{tsming@mail.ustc.edu.cn}

\author[X.~Ye]{Xiangdong Ye}
\address[X.~Ye]{Wu Wen-Tsun Key Laboratory of Mathematics, USTC, Chinese Academy of Sciences and
School of Mathematics, University of Science and Technology of China,
Hefei, Anhui, 230026, P.R. China}
\email{yexd@ustc.edu.cn}

\date{\today}

\begin{abstract} Answering an open question affirmatively it is shown that
every ergodic invariant measure of a mean equicontinuous (i.e. mean-L-stable) system has discrete spectrum.
Dichotomy results related to mean equicontinuity and mean sensitivity are obtained when a dynamical
system is transitive or minimal.

Localizing the notion of mean equicontinuity,
notions of almost mean equicontinuity and almost Banach mean equicontinuity are introduced.
It turns out that a system with the former property may have positive entropy and meanwhile a
system with the later property must have zero entropy.
\end{abstract}
\keywords{Mean equicontinuity, mean sensitivity, Banach mean sensitivity, discrete spectrum}
\subjclass[2010]{54H20, 37B25, 37B05, 37B40}
\maketitle

\section{Introduction}
Let $X$ be a compact metric space with a metric $d$, and let $T$ be a continuous map from $X$ to itself.
The pair $(X,T)$ will be called a \emph{(topological) dynamical system}.

A dynamical system $(X,T)$ is called \emph{equicontinuous} if for every $\ep>0$ there is a $\delta>0$ such that
whenever $x,y\in X$ with $d(x,y)<\delta$, $d(T^nx,T^ny)<\ep$ for $n=0,1,2,\dotsc$,
that is, the family of maps $\{T^n\colon n\in \Z\}$ is uniformly equicontinuous.
Equicontinuous systems have simple dynamical behaviors.
It is well known that a dynamical system $(X,T)$ with $T$ being surjective is equicontinuous if and only if
there exists a compatible metric $\rho$ on $X$ such that $T$ acts on $X$ as an isometry, i.e.,
$\rho(Tx,Ty)=\rho(x,y)$ for any $x,y\in X$.
Moreover, a transitive equicontinuous system is conjugate to
a minimal rotation on a compact abelian metric group,
and $(X,T,\mu)$ has discrete spectrum, where $\mu$ is the unique Haar measure on $X$.

When studying dynamical systems with discrete spectrum, Fomin~\cite{F51} introduced a notion
called \emph{stable in the mean in the sense of Lyapunov} or simply \emph{mean-L-stable}.
A dynamical system $(X,T)$ is \emph{mean-L-stable}
if for every $\ep>0$, there is a $\delta>0$ such that $d(x,y)<\delta$
implies $d(T^nx,T^ny)<\ep$ for all $n\in\Z$ except a set of
upper density less than $\ep$.
Fomin proved that if a minimal system is mean-L-stable then it is uniquely ergodic.
Mean-L-stable systems are also discussed briefly by Oxtoby in~\cite{O52},
and he proved that each transitive mean-L-stable system is uniquely ergodic.
Auslander in~\cite{A59} systematically studied mean-L-stable systems, and provided new examples.
See Scarpellini \cite{S82} for a related work.
It is an open questions if every ergodic invariant measure on a mean-L-stable system has discrete spectrum~\cite{S82}.
We will give an affirmative answer to this question (Theorem \ref{thm:mean-eq-discrete-spectrum}).

We introduce equicontinuity in the mean sense, more precisely,
a dynamical system $(X,T)$ is called \emph{mean equicontinuous} if for every $\ep>0$,
there exists a $\delta>0$ such that whenever $x,y\in X$ with $d(x,y)<\delta$,
\[\limsup_{n\to\infty}\frac{1}{n}\sum_{i=0}^{n-1}d(T^ix,T^iy)<\ep.\]
We show that a dynamical system is mean equicontinuous if and only if it is mean-L-stable
(Lemma~\ref{lem:mean-equi-mean-L-stalbe}).
We also show each dynamical system admits a maximal mean equicontinuous factor (Theorem \ref{thm:max-mean-equi-factor}),
and mean equicontinuity is preserved by factor maps (Theorem \ref{thm:mean-equi-factor}).
We remark that studying dynamical properties in the mean sense is an
interesting topic, see \cite{OW} for the research on mean distality
and \cite{tomasz, HLY13} for the investigation on mean Li-Yorke chaos.

The notion of sensitivity was introduced when studying the complexity of a dynamical system,
and it is a part of the known definition of chaos in the Devaney sense.
We say that a dynamical system $(X,T)$ has \emph{sensitive dependence on initial condition}
or briefly $(X,T)$ is \emph{sensitive}
if there exists a $\delta>0$ such that for every $x\in X$
and every neighborhood $U$ of $x$, there exists $y\in U$ and $n\in\N$ such that $d(T^nx,T^ny)>\delta$.

When considering the opposite side of sensitivity the notion of equicontinuity at a point appears naturally, see \cite{GW93}.
That is a point $x\in X$ is called an \emph{equicontinuous point} (or $(X,T)$ is \emph{equicontinuous at $x$})
if for every $\ep>0$ there is a $\delta>0$ such that for every $y\in X$ with $d(x,y)<\delta$,
$d(T^nx,T^ny)<\ep$ for all $n\in\Z$.
If every point in $X$ is an equicontinuous point then by the compactness of $X$
the dynamical system $(X,T)$ is equicontinuous. A transitive system is called
\emph{almost equicontinuous} if there is at least one equicontinuous point.
Almost equicontinuous systems have been studied intensively and
have many applications. For example, the enveloping semigroup $E(X)$ is metrizable if and only if
$(X,T)$ is hereditarily almost equicontinuous \cite{GMU}.

We know that if $(X,T)$ is almost equicontinuous then the set of equicontinuous points coincides
with the set of all transitive points \cite{AAB}, it is uniformly rigid ~\cite{GW93} and thus has zero topological entropy ~\cite{GM89}.
We have the following dichotomy results.
If $(X,T)$ is minimal, then $(X,T)$ is either equicontinuous  or sensitive \cite{AY80};
and if $(X,T)$ is transitive, then $(X,T)$ is either almost equicontinuous or sensitive \cite{AAB}.

\medskip

Inspirited by the above ideas, we will introduce  notions of almost mean equicontinuity and mean sensitivity.
A point $x\in X$ is called \emph{mean equicontinuous} if for every $\ep>0$,
there exists a $\delta>0$ such that for every $y\in X$ with $d(x,y)<\delta$,
\[\limsup_{n\to\infty}\frac{1}{n}\sum_{i=0}^{n-1}d(T^ix,T^iy)<\ep.\]
A transitive system is called \emph{almost mean equicontinuous} if there is at least one mean equicontinuous point.
A dynamical system $(X,T)$ is called \emph{mean sensitive}
there exists a $\delta>0$ such that for every $x\in X$
and every neighborhood $U$ of $x$, there exists $y\in U$ and $n\in\N$ such that
\[\limsup_{n\to\infty}\frac{1}{n}\sum_{i=0}^{n-1}d(T^ix,T^iy)>\delta.\]

We show that if a dynamical system $(X,T)$ is minimal,
then $(X,T)$ is either  mean equicontinuous or mean sensitive (Corollary \ref{meorms}),
and if $(X,T)$ is transitive, then $(X,T)$ is either  almost mean equicontinuous or mean sensitive
(Theorem \ref{almostmeorms}).
Unlike the case of almost equicontinuous systems,
we show that for almost mean equicontinuous systems the set of transitive points is contained in the
set of all mean equicontinuous points and there are examples in which they do not coincide.
It is unexpected that there are almost
mean equicontinuous systems admitting positive topological entropy (Theorem \ref{p-entropy}),
while every almost equicontinuous system has zero topological entropy.

Thus it is natural to seek a class of mean equicontinuous systems for which the localized systems at least have zero entropy.
We find the class of {\it Banach mean equicontinuous systems} obtained by replacing small upper density with small
Banach density in the definition of mean-L-stable systems is the right one.
Namely we show that almost Banach mean equicontinuous systems
have zero topological entropy (Corollary \ref{entropyzero}),
and this implies that the almost mean equicontinuous systems admitting positive topological entropy
we constructed in Theorem \ref{p-entropy} are not almost Banach mean equicontinuous.
The deep reason of this is that in a transitive system
a transitive point can only approach a ``chaotic subsystem''
for time segments, which may result large Banach density and at the same time small upper density.

\medskip

\section{Preliminaries}
In this section we recall some notions and aspects of the theory of topological dynamical systems.

\subsection{Subsets of non-negative integers}
Denote by $\Z$ ($\N$, $\mZ$, respectively) the set of all non-negative integers (positive integers, integers, respectively).
Let $F\subset \Z$. We say that $F$ is an \emph{IP-set} if there is a subsequence $\{p_i\}$ of $\N$ such that
\[\{p_{i_1}+\dotsb+p_{i_k}\colon i_1<\dotsb<i_k, n\in\N\}\subset F;\]
$F$ is \emph{syndetic} if there is $k > 0$ such that
$[i,i+k]\cap F\neq\emptyset$ for every $i \in \N$.

We define the \emph{upper density} $\overline{D}(F)$ of $F$ by
\[ \overline{D}(F)=\limsup_{n\to\infty} \frac{\#(F\cap[0,n-1])}{n},\]
where $\#(\cdot)$ is the number of elements of a set.
Similar, $\underline{D}(F)$, the \emph{lower density} of $F$, is defined by
\[ \underline{D}(F)=\liminf_{n\to\infty} \frac{\#(F\cap[0,n-1])}{n}.\]
One may say $F$  has \emph{density} $D(F)$ if $\overline{D}(F)=\underline{D}(F)$,
in which case $D(F)$ is equal to this common value.
The \emph{upper Banach density} $BD^*(F)$ is defined by
\[BD^*(F)=\limsup_{N-M\to\infty} \frac{\#(F\cap[M,N])}{N-M+1}.\]
Similarly, we can define the \emph{lower Banach density} $BD_*(F)$ and \emph{Banach density} $BD(F)$.

\subsection{Compact metric spaces}
Let $(X,d)$ be a compact metric space.
For $x\in X$ and $\ep>0$, denote $B(x,\ep)=\{y\in X\colon d(x,y)<\ep\}$.
Denote by the product space $X\times X=\{(x,y)\colon x,y\in X\}$ and the diagonal $\Delta_X=\{(x,x)\colon x\in X\}$.
A subset of $X$ is called a \emph{$G_\delta$ set}
if it can be expressed as a countable intersection of open sets;
a \emph{residual set} if it contains the intersection of a countable collection of dense open sets.
By the Baire category theorem, a residual set is also dense in  $X$.

Let $C(X)$ be the set of continuous real functions on $X$  with the supremum norm
$\Vert f\Vert=\sup_{x\in X}|f(x)|$.
Let $M(X)$ be the set of regular Borel probability measures on $X$.
The \emph{support} of a measure $\mu\in M(X)$, denoted by $\supp(\mu)$,
is the smallest closed subset $C$ of $X$ such that $\mu(C)=1$.
We regard $M(X)$ as a closed convex subset of $C(X)^*$, the dual space of $C(X)$, equipped with the
weak$^*$ topology. Then $M(X)$ is a compact metric space.

\subsection{Topological dynamics}
Let $(X,T)$ be a dynamical system.
The \emph{orbit} of a point $x\in X$, $\{x,Tx,T^2x, \ldots,\}$, is denoted by $Orb(x,T)$.
The $\omega$-limit set of $x$ is the set of limit points of the orbit sequence
\[\omega(x,T)=\bigcap_{N\geq 0} \overline{\{T^nx\colon n\geq N\}}.\]

If $A$ is a non-empty closed subset of $X$ and $TA\subset A$, then $(A,T|_A)$ is called a \emph{subsystem} of $(X,T)$,
where $T|_A$ is the restriction of $T$ on $A$. If there is no ambiguity, we will use the notation $T$ instead of $T|_A$.

We say that a point $x\in X$ is \emph{recurrent} if $x\in\omega(x,T)$.
The system $(X,T)$ is called \emph{(topologically) transitive} if $\omega(x,T)=X$ for some $x\in X$, and
such a point $x$ is called a \emph{transitive point}. Denote by $Trans(X,T)$ the set of transitive points of $(X,T)$.
With a Baire category argument, one can show that
if $(X,T)$ is transitive then $Trans(X,T)$ is a dense $G_\delta$ subset of $X$.
If the product system $(X\times X,T\times T)$ is transitive, then we say that $(X,T)$ is \emph{weakly mixing}.

The system $(X,T)$ is said to be \emph{minimal} if every point of $X$ is a transitive point (i.e., $Trans(X,T)=X$).
A subset $Y$ of $X$ is called \emph{minimal} if $(Y,T)$ forms a minimal subsystem of $(X,T)$.
A point $x\in X$ is called \emph{minimal} if it is contained in a minimal set $Y$ or,
equivalently, if the subsystem $(\overline{Orb(x,T)},T)$ is minimal.

For $x\in X$ and $A\subset X$, let $N(x,A)=\{n\in\Z\colon T^nx\in A\}$.
If $U$ is a neighborhood of $x$, then the set $N(x,U)$ is called the set of \emph{return times} of the point $x$
to the neighborhood $U$.
The following result is well-known, see~\cite{F81} for example.
\begin{lem} \label{lem:rec-ip}
Let $(X,T)$ be a dynamical system and $x\in X$. Then
\begin{enumerate}
  \item $x$ is recurrent if and only if $N(x,U)$ contains an IP-set for every neighborhood $U$ of $x$;
  \item $x$ is minimal if and only if $N(x,U)$ is syndetic for every neighborhood $U$ of $x$.
\end{enumerate}
\end{lem}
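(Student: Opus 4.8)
The plan is to establish the two equivalences separately, in each case disposing quickly of the direction that follows from a compactness argument and concentrating on the direction requiring an explicit construction. Throughout I will use that the balls $B(x,\ep)$ form a neighborhood basis at $x$ and that finitely many of the maps $T^0,T^1,\dots,T^k$ are jointly uniformly continuous on the compact space $X$.

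For part (1), the implication that $N(x,U)$ containing an IP-set for every neighborhood $U$ forces recurrence is immediate, since an IP-set is infinite (its partial sums strictly increase), so $N(x,B(x,1/k))$ is infinite for each $k$ and yields a sequence $n_j\to\infty$ with $T^{n_j}x\to x$, i.e. $x\in\omega(x,T)$. The substance is the converse. Assuming $x$ recurrent and a neighborhood $U$ given, I would construct the generating sequence $\{p_i\}$ together with a nested family $U=U_0\supseteq U_1\supseteq U_2\supseteq\cdots$ by induction: having chosen $U_{n-1}$, recurrence supplies some $p_n\ge 1$, larger than $p_1,\dots,p_{n-1}$, with $T^{p_n}x\in U_{n-1}$, and continuity of $T^{p_n}$ supplies a neighborhood $U_n\subseteq U_{n-1}$ of $x$ with $T^{p_n}(U_n)\subseteq U_{n-1}$. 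The key point is the claim that $T^{p_{i_1}+\cdots+p_{i_k}}x\in U_{i_1-1}\subseteq U$ for all $i_1<\cdots<i_k$, proved by induction on $k$: the tail sum lands in $U_{i_2-1}\subseteq U_{i_1}$ by the inductive hypothesis, and applying $T^{p_{i_1}}$ then carries it into $U_{i_1-1}$. I expect this to be the main obstacle, namely getting the bookkeeping between the indices of the neighborhoods and of the summands exactly right so that every finite sum remains inside $U$.

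For part (2), I would first show that minimality implies syndeticity. Writing $M=\overline{Orb(x,T)}$ for the minimal subsystem and setting $V=U\cap M$, a nonempty relatively open subset of $M$, minimality makes every orbit meet $V$, so $\{T^{-i}V:i\ge 0\}$ covers $M$; compactness gives $M=\bigcup_{i=0}^{k}T^{-i}V$. Then for every $m\ge 0$ the point $T^m x$ lies in some $T^{-i}V$ with $0\le i\le k$, so $m+i\in N(x,U)$, whence every window $[m,m+k]$ meets $N(x,U)$. For the converse I would assume $N(x,U)$ syndetic for every $U$ and show $x\in\overline{Orb(y,T)}$ for each $y\in M$, which suffices since then $M=\overline{Orb(x,T)}\subseteq\overline{Orb(y,T)}\subseteq M$. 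Fixing $y$ and $\ep>0$, choose a gap bound $k$ for the syndetic set $N(x,B(x,\ep/2))$ and, by uniform continuity of $T^0,\dots,T^k$, a $\delta>0$ with $d(a,b)<\delta$ implying $d(T^ia,T^ib)<\ep/2$ for all $i\le k$. Taking $m$ with $d(T^m x,y)<\delta$ and a return time $p=m+j\in[m,m+k]$ with $T^p x\in B(x,\ep/2)$, the triangle inequality gives $d(T^j y,x)\le d(T^j y,T^p x)+d(T^p x,x)<\ep$, so $T^j y\in B(x,\ep)$. As $\ep$ is arbitrary this places $x$ in $\overline{Orb(y,T)}$ and completes the argument.
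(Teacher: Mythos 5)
The paper gives no proof of this lemma, stating it as well-known and citing \cite{F81}; your argument is correct and is precisely the standard one found there, with both halves complete: the nested-neighborhood construction $U=U_0\supseteq U_1\supseteq\cdots$ with $T^{p_n}(U_n)\subseteq U_{n-1}$ correctly yields $T^{p_{i_1}+\cdots+p_{i_k}}x\in U_{i_1-1}\subseteq U$ by induction on $k$, and the finite-subcover plus uniform-continuity argument for part (2) is the classical Gottschalk--Hedlund proof. Nothing to add.
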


A pair of points $(x,y)\in X\times X$ is said to be \emph{proximal}
if for any $\ep>0$, there exists a positive integer $n$ such that $d(T^nx,T^ny)<\ep$.
Let $P(X,T)$ denote the collection of all proximal pairs in $(X,T)$.
The dynamical system $(X,T)$ is called \emph{proximal} if any pair of two points in $X$ is proximal,
i.e., $P(X,T)=X\times X$.
If $(x,y)\in X\times X$ is not proximal, then it is said to be \emph{distal}.
A dynamical system $(X,T)$ is called \emph{distal} if any pair
of distinct points in $(X,T)$ is distal.

A pair of points $(x,y)\in X\times X$ is said to be \emph{Banach proximal}
if for any $\ep>0$, $d(T^nx,T^ny)<\ep$ for all $n\in \Z$ except a set of zero Banach density.
Let $BP(X,T)$ denote the collection of all Banach proximal pairs in $(X,T)$.
See \cite{LT13} for a detailed study on Banach proximality.

Recall that a pair of points $(x,y)$ is called \emph{regionally proximal}
if for every $\ep>0$, there exist two points  $x',y'\in X$
with $d(x,x')<\ep$ and $d(y,y')<\ep$, and a positive integer $n$ such that $d(T^nx',T^ny')<\ep$.
Let $Q(X,T)$ be the set of all regionally proximal pairs in $(X,T)$.
Clearly, $Q(X,T)\supset P(X,T)\supset BP(X,T)$.

When $(X,T)$ and $(Y,S)$ are two dynamical systems and $\pi\colon X\to Y$ is a continuous onto map which
intertwines the actions (i.e., $\pi\circ T=S\circ \pi$),
one says that $(Y, S)$ is a \emph{factor} of $(X,T )$ or $(X, T)$ is an \emph{extension} of $(Y, S)$,
and $\pi$ is a \emph{factor map}.
If $\pi$ is a homeomorphism, then we say that $\pi$ is a \emph{conjugacy} and
that the dynamical systems $(X,T)$ and $(Y,S)$ are \emph{conjugate}.
By the Halmos and von Neumann Theorem (see \cite[Theorem 5.18]{W82}),
a minimal system  is equicontinuous if and only if it is
conjugate to a minimal rotation on a compact abelian metric group.

If $\pi\colon (X,T)\to (Y,S)$ is a factor map, then $R_\pi=\{(x,x')\in X\times X\colon \pi(x)=\pi(x')\}$
is closed $T\times T$-invariant equivalence relation, that is $R_\pi$ is a closed subset of $X \times X$ and
if $(x,x')\in R_\pi$, then $(Tx,Tx')\in R_\pi$.
Conversely, if $R$ is a closed $T\times T$-invariant equivalence relation on $X$,
then the quotient space $X/R$ is a compact metric space
and $T$ naturally induces an action on $X/R$ by $T_R([x])=[Tx]$.
Then $(X/R,T_R)$ forms a dynamical system and the quotient map $\pi_R\colon X\to X/R$ is a factor map.
Hence there is a one-to-one correspondence between factors and closed invariant equivalence relations,
we will use them interchangeably.
A factor map  $\pi\colon (X,T)\to (Y,S)$ is called \emph{proximal} (resp. \emph{Banach proximal})
 if whenever $\pi(x)=\pi(y)$ the pair $(x,y)$ is proximal (resp. Banach proximal).

An equicontinuous factor of $(X,T)$
is maximal if any other equicontinuous factor of $(X,T)$ factors through it.
It is thus unique up to conjugacy and therefore referred to as the \emph{maximal equicontinuous factor}.
Let $\pi\colon(X,T)\to (Y,S)$ be the factor map to the maximal equicontinuous factor.
The equivalence relation $R_\pi$ is called the \emph{equicontinuous structure relation}.
Similarly, we can define \emph{the maximal distal factor} and the \emph{distal structure relation}.
It is shown in~\cite{EG60} that the equicontinuous structure relation is the smallest closed invariant equivalence relation
containing the regional proximal relation, and the distal structure relation
is the smallest closed invariant equivalence relation
containing the proximal relation.

We refer the reader to the textbook~\cite{W82} for information on topological entropy.
\subsection{Invariant measures}
Let $(X,T)$ be a dynamical system and $M(X,T)$ be the set of $T$-invariant regular Borel probability measures on $X$.
It is well known that any dynamical system $(X,T)$ admits at least one
$T$-invariant regular Borel probability measures. Moreover, we known that $M(X,T)$ is a compact metric space.
An invariant measure is ergodic if and only if it is an extreme point of $M(X,T)$.
The \emph{support} of a dynamical system $(X,T)$, denoted by $\supp(X,T)$,
is the smallest closed subset $C$ of $X$ such that $\mu(C)=1$ for all $\mu\in M(X,T)$.

The action of $T$ on $X$ induces an action on
$M(X)$ in the following way: for $\mu\in M(X)$ we define $T\mu$ by
\[\int_X f(x)\ dT\mu(x)= \int_X f(Tx)d\mu(x), \quad \forall f\in C(X).\]
Hence $(M(X),T)$ is also a topological dynamical system.
We say $(X,T)$ is \emph{strongly proximal} if $(M(X),T)$ is proximal.

Let $(X,T)$ be a dynamical system.
We call $(X,T)$ an \emph{$E$-system} if it is transitive and there exists $\mu\in M(X,T)$ such that $\supp(\mu)=X$.
We say that $(X,T)$ is \emph{uniquely ergodic} if $M(X,T)$ consists a single measure.
If $(X,T)$ is a minimal rotation on a compact abelian metric group, then it is uniquely ergodic and
the Haar measure is the only invariant measure.

For a dynamical system $(X,T)$, $f\in C(X)$ and $n\in\N$, let
$f_n(x)=\frac{1}{n}\sum_{i=0}^{n-1} f(T^ix).$ The following theorem is well known.

\begin{thm}[\cite{O52}] \label{thm:unique-ergodic}
Let $(X,T)$ be a dynamical system. Then the following conditions are equivalent:
\begin{enumerate}
  \item $(X,T)$ is uniquely ergodic;
  \item for each $f\in C(X)$, $\{f_n\}_{n=1}^\infty$ converges uniformly on $X$ to a constant;
\item for each $f\in C(X)$, there is a subsequence $\{f_{n_k}\}_{k=1}^\infty$
which converges pointwise on $X$ to a constant.
\item $(X,T)$ contains only one minimal set, and
for each $f\in C(X)$, $\{f_n\}_{n=1}^\infty$ converges uniformly on $X$.
\end{enumerate}
\end{thm}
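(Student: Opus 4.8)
My plan is to prove the chain $(1)\Rightarrow(2)\Rightarrow(3)\Rightarrow(1)$ and then to tie in $(4)$ by establishing $(1)\Rightarrow(4)$ and $(4)\Rightarrow(2)$, so that all four conditions become equivalent. The one genuinely substantial step is $(1)\Rightarrow(2)$, the uniform convergence theorem, and I expect $(4)\Rightarrow(2)$ to be the second trickiest point.

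For $(1)\Rightarrow(2)$ I would argue by contradiction. Let $\mu$ be the unique invariant measure and set $c=\int_X f\,d\mu$. If $\{f_n\}$ did not converge uniformly to $c$, there would be $\ep>0$, points $x_k\in X$ and integers $n_k\to\infty$ with $|f_{n_k}(x_k)-c|\ge \ep$. Form the empirical measures $\mu_k=\frac{1}{n_k}\sum_{i=0}^{n_k-1}\delta_{T^ix_k}\in M(X)$, so that $\int f\,d\mu_k=f_{n_k}(x_k)$, and pass to a weak$^*$ limit $\nu$ of a subsequence using compactness of $M(X)$. The telescoping estimate $\big|\int (g\circ T-g)\,d\mu_k\big|=\frac{1}{n_k}|g(T^{n_k}x_k)-g(x_k)|\le \frac{2\|g\|}{n_k}\to 0$ for each $g\in C(X)$ shows in the limit that $\nu$ is $T$-invariant. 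Unique ergodicity forces $\nu=\mu$, yet $\int f\,d\nu=\lim f_{n_k}(x_k)$ lies at distance $\ge \ep$ from $c=\int f\,d\mu$, a contradiction. The implication $(2)\Rightarrow(3)$ is then immediate, since uniform convergence of the whole sequence to a constant yields pointwise convergence of a subsequence to that constant.

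For $(3)\Rightarrow(1)$, take any $\mu\in M(X,T)$ and $f\in C(X)$. Invariance gives $\int f_n\,d\mu=\int f\,d\mu$ for every $n$, and choosing a subsequence $f_{n_k}\to c_f$ pointwise with $|f_{n_k}|\le \|f\|$, the bounded convergence theorem yields $\int f\,d\mu=\lim_k\int f_{n_k}\,d\mu=c_f$. Since $c_f$ is independent of $\mu$, any two invariant measures agree on all of $C(X)$ and hence coincide; because at least one invariant measure always exists, the system is uniquely ergodic.

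It remains to incorporate $(4)$. For $(1)\Rightarrow(4)$, the already-proved $(1)\Rightarrow(2)$ supplies the uniform convergence, and unique ergodicity forces a single minimal set: each minimal subsystem carries an invariant measure supported in it, and two distinct minimal sets are disjoint, which would produce two distinct invariant measures. For the delicate direction $(4)\Rightarrow(2)$, suppose $f_n\to g$ uniformly. Then $g\in C(X)$, and the identity $f_n\circ T-f_n=\frac{1}{n}(f\circ T^n-f)\to 0$ uniformly shows $g\circ T=g$. The nonempty closed set $\{x\in X: g(x)=\max_X g\}$ is $T$-invariant, hence contains a minimal set, which must be the unique minimal set $M$; since a continuous invariant function is constant on any minimal set, $\max_X g$ equals the common value of $g$ on $M$. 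Applying the same argument to the minimum shows $\min_X g$ equals that same value, so $g$ is constant and $(2)$ holds. This closes the equivalence.
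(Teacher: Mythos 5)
Your proof is correct. The paper itself gives no proof of this theorem --- it is stated as well known and attributed to Oxtoby --- so there is nothing internal to compare against; your argument (empirical measures and weak$^*$ compactness for $(1)\Rightarrow(2)$, bounded convergence for $(3)\Rightarrow(1)$, and the max/min-set argument on the invariant limit function for $(4)\Rightarrow(2)$) is the standard classical route and all steps check out.
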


Let $(X,T)$ be a dynamical system and $\mu$ be an invariant measure.
A complex number $\lambda$ is called an \emph{eigenvalue} of $(X,T,\mu)$ if there is $f\in L^2(\mu)$, with
$f$ not the zero function, satisfying $f(Tx)=\lambda f(x)$ for $\mu$-a.e. $x\in X$.
The function $f$ is called an \emph{eigenfunction} of $(X,T,\mu)$ corresponding the eigenvalue $\lambda$.
The measurable dynamical system $(X,T,\mu)$ has \emph{discrete spectrum} or \emph{pure point spectrum} if
there exists an orthonormal basis for $L^2(\mu)$ which consists of eigenfunctions of $(X,T,\mu)$.
If $\mu$ is ergodic, then $(X,T,\mu)$ has discrete spectrum if and only if it is conjugate to
an ergodic rotation on some compact abelian group (see \cite[Theorem~3.6]{W82}).

\section{Mean equicontinuous systems}
In this section, we study mean equicontinuous systems.
We obtain several equivalent conditions of mean equicontinuity.
We also show that every ergodic invariant measure on a mean equicontinuous system has discrete spectrum,
every dynamical system admits a maximal mean equicontinuous factor,
and mean equicontinuity is preserved by factor maps. We first observe that mean equicontinuity
is equivalent to mean-L-stability which was first introduced in~\cite{F51}.

\begin{lem} \label{lem:mean-equi-mean-L-stalbe}
A dynamical system is mean equicontinuous if and only if it is mean-L-stable.
\end{lem}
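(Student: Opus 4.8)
The plan is to observe that the two definitions are elementary reformulations of one another, differing only in whether one measures the \emph{size} of the displacement $d(T^ix,T^iy)$ via its Cesàro average or via the upper density of the times at which it is large. Write $M=\diam(X)$, and for a pair $x,y\in X$ and a threshold $\eta>0$ set
\[
A_\eta=A_\eta(x,y)=\{\,i\in\Z\colon d(T^ix,T^iy)\geq\eta\,\}.
\]
Then $0\leq d(T^ix,T^iy)\leq M$ for all $i$, and the whole proof reduces to two one-line estimates comparing $\frac1n\sum_{i=0}^{n-1}d(T^ix,T^iy)$ with $\frac1n\#\bigl(A_\eta\cap[0,n-1]\bigr)$, followed by taking $\limsup_{n\to\infty}$ and reading off the upper density $\overline{D}(A_\eta)$.

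First I would show that mean-L-stability implies mean equicontinuity, using boundedness of the metric. Given a target $\ep_0>0$, apply the mean-L-stable property with parameter $\ep=\ep_0/(M+1)$ to obtain $\delta>0$ such that $d(x,y)<\delta$ forces $\overline{D}(A_\ep)<\ep$. Splitting the sum according to whether $i\in A_\ep$ or not, bounding the former terms by $M$ and the latter by $\ep$, gives
\[
\frac1n\sum_{i=0}^{n-1}d(T^ix,T^iy)\leq M\cdot\frac{\#\bigl(A_\ep\cap[0,n-1]\bigr)}{n}+\ep .
\]
Taking $\limsup_{n\to\infty}$ and inserting $\overline{D}(A_\ep)<\ep$ bounds the left side by $M\ep+\ep=\ep_0$, which is exactly the mean equicontinuity inequality.

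For the converse I would use a Markov-type inequality. Given $\ep>0$, apply mean equicontinuity with parameter $\ep^2$ to get $\delta>0$ such that $d(x,y)<\delta$ implies $\limsup_{n\to\infty}\frac1n\sum_{i=0}^{n-1}d(T^ix,T^iy)<\ep^2$. Since every index $i\in A_\ep$ contributes at least $\ep$ to the sum,
\[
\ep\cdot\frac{\#\bigl(A_\ep\cap[0,n-1]\bigr)}{n}\leq\frac1n\sum_{i=0}^{n-1}d(T^ix,T^iy),
\]
so taking $\limsup_{n\to\infty}$ and dividing by $\ep$ yields $\overline{D}(A_\ep)\leq\ep^{-1}\cdot\ep^{2}=\ep$, in fact strictly less than $\ep$. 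Thus $d(x,y)<\delta$ implies $d(T^nx,T^ny)<\ep$ off a set of upper density below $\ep$, establishing mean-L-stability.

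There is no genuine obstacle here: the entire content is the recognition that boundedness governs one direction and Markov's inequality the other. The only point demanding care is the bookkeeping of the parameters — choosing $\ep_0/(M+1)$ in the first direction and $\ep^2$ in the second — so that the resulting constants land strictly below the prescribed thresholds; the case $M=0$ (a one-point space) is covered automatically by the $M+1$ in the denominator. Whether $A_\eta$ is defined with a strict or non-strict inequality is immaterial to the densities involved, so no boundary cases need separate treatment.
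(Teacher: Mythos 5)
Your proof is correct and follows essentially the same route as the paper's: the same Markov-type estimate with parameter $\ep^2$ for the forward direction, and the same split of the Ces\`aro sum with parameter $\ep/(\diam(X)+1)$ for the converse. Nothing to add.
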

\begin{proof}
Assume that $(X,T)$ is mean equicontinuous.
For every $\ep>0$ there exists a $\delta>0$ such that
\[\limsup_{n\to\infty}\frac{1}{n}\sum_{i=0}^{n-1}d(T^ix,T^iy)<\ep^2\]
for all $x,y\in X$ with $d(x,y)<\delta$.
Let $E=\{k\in\Z: d(T^kx,T^ky)\geq \ep\}$. One has
\begin{align*}
  \ep^2>\limsup_{n\to\infty} \frac{1}{n}\sum_{i=0}^{n-1}d(T^ix,T^iy)\geq
  \limsup_{n\to\infty} \frac{1}{n} (\ep\#([0,n-1]\cap E))=\ep\overline{D}(E),
    \end{align*}
and then $\overline{D}(E)<\ep$, which implies $(X,T)$ is mean-L-stable.

Conversely, assume that $(X,T)$ is mean-L-stable.
Fix a positive number $\ep>0$ and choose a positive number $\eta<\frac{\ep}{\diam(X)+1}$.
There is a $\delta>0$ such that $d(x,y)<\delta$
implies $d(T^nx,T^ny)<\eta$ for all $n\in\Z$ except a set of
upper density less than $\eta$.
Let $F=\{k\in\Z: d(T^kx,T^ky)\geq \eta\}$. One has $\overline{D}(F)<\eta$ and
\begin{align*}
  \limsup_{n\to\infty} \frac{1}{n}\sum_{i=0}^{n-1}d(T^ix,T^iy)&\leq
  \limsup_{n\to\infty} \frac{1}{n} (\diam(X)\#([0,n-1]\cap F)+\eta n)\\
&\leq \diam(X)\overline{D}(F)+\eta<\ep,
   \end{align*}
 which implies $(X,T)$ is mean equicontinuous.
\end{proof}

The following lemma is easy to verify.
\begin{lem} \label{lem:mean-prod}
Let $(X,T)$ and $(Y,S)$ be two dynamical systems.
Then $(X\times Y,T\times S)$ is mean equicontinuous if and only if
both $(X,T)$ and $(Y,S)$ are mean equicontinuous.
\end{lem}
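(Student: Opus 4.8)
The plan is to fix once and for all the product metric $\rho\bigl((x,y),(x',y')\bigr)=d_X(x,x')+d_Y(y,y')$ on $X\times Y$, where $d_X,d_Y$ denote the metrics on $X,Y$ respectively (one could equally use the maximum metric; since $X$ and $Y$ are compact, any two compatible metrics induce the same mean equicontinuity notion, so there is no loss in making a concrete choice). The entire argument then rests on the exact additive identity
\[
\frac{1}{n}\sum_{i=0}^{n-1}\rho\bigl((T\times S)^i(x,y),(T\times S)^i(x',y')\bigr)
=\frac{1}{n}\sum_{i=0}^{n-1}d_X(T^ix,T^ix')+\frac{1}{n}\sum_{i=0}^{n-1}d_Y(S^iy,S^iy'),
\]
valid for all $n$, together with the subadditivity of $\limsup$ and the coordinatewise bounds $d_X,d_Y\le\rho$.

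For the \emph{if} direction, I would assume both factors are mean equicontinuous. Given $\ep>0$, apply mean equicontinuity of $(X,T)$ and of $(Y,S)$ at scale $\ep/2$ to obtain $\delta_1,\delta_2>0$, and set $\delta=\min\{\delta_1,\delta_2\}$. If $\rho\bigl((x,y),(x',y')\bigr)<\delta$ then in particular $d_X(x,x')<\delta_1$ and $d_Y(y,y')<\delta_2$, so taking $\limsup_{n\to\infty}$ in the identity above and using subadditivity of $\limsup$ gives a bound by $\ep/2+\ep/2=\ep$. This yields mean equicontinuity of the product.

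For the \emph{only if} direction, I would assume the product is mean equicontinuous and recover each factor by the standard coordinate-fixing trick. Given $\ep>0$, choose $\delta>0$ witnessing mean equicontinuity of $(X\times Y,T\times S)$ at scale $\ep$. To handle $(X,T)$, fix an arbitrary point $y\in Y$ and, whenever $d_X(x,x')<\delta$, apply the product estimate to the pair $(x,y),(x',y)$: here the second coordinates coincide, so $\rho\bigl((x,y),(x',y)\bigr)=d_X(x,x')<\delta$ and the $Y$-average in the identity vanishes identically, forcing $\limsup_{n\to\infty}\frac1n\sum_{i=0}^{n-1}d_X(T^ix,T^ix')<\ep$. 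The same argument with roles reversed handles $(Y,S)$. I do not expect any genuine obstacle here—the lemma is of the routine ``separation of variables'' type—so the only points requiring care are the choice of product metric (and the remark that it is immaterial) and the observation that fixing one coordinate makes the corresponding summand identically zero, which is what makes the reduction to a single factor exact rather than merely an inequality.
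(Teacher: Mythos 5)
Your proof is correct. The paper states this lemma without proof (``the following lemma is easy to verify''), and your separation-of-variables argument --- the additive identity for the Ces\`aro averages under the sum metric, subadditivity of $\limsup$ for one direction, and the coordinate-fixing trick for the other --- is exactly the routine verification the authors intend.
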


We have the following characterization of mean equicontinuous systems which is implicit in~\cite{O52}.
For completeness we include a proof.
\begin{thm}\label{thm:mean-equi-unifon-conv}
Let $(X,T)$ be a dynamical system. Then the following conditions are equivalent:
\begin{enumerate}
  \item $(X,T)$ is mean equicontinuous;
  \item for each $f\in C(X\times X)$, the sequence $\{f_n\}_{n=1}^\infty$ is uniformly equicontinuous;
  \item for each $f\in C(X\times X)$, the sequence $\{f_n\}_{n=1}^\infty$ is uniformly convergent to a
  $T\times T$-invariant continuous  function $f^*\in C(X\times X)$.
\end{enumerate}
\end{thm}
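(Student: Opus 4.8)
The plan is to work throughout in the product system $(X\times X,T\times T)$ and to read the three conditions as statements about the Birkhoff averages $f_n(x,y)=\frac1n\sum_{i=0}^{n-1}f(T^ix,T^iy)$ of functions $f\in C(X\times X)$. I would prove the cycle $(1)\Rightarrow(2)\Rightarrow(3)\Rightarrow(1)$. The analytic core is the passage from equicontinuity of the averages to their uniform convergence; the dynamical input enters only through the single test function $f=d$.

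For $(2)\Rightarrow(3)$, fix $f$ and write $S=T\times T$. The sequence $\{f_n\}$ is bounded by $\|f\|$ and, by hypothesis, uniformly equicontinuous, so by the Arzel\`a--Ascoli theorem it is relatively compact in $C(X\times X)$. Every uniform limit point $h$ is $S$-invariant, since $\|f_n\circ S-f_n\|=\frac1n\|f\circ S^n-f\|\le\frac{2\|f\|}{n}\ra 0$. To see that the limit is unique I would use the symmetric double-average identity $(f_n)_m=(f_m)_n=\frac{1}{nm}\sum_{i<n,\,j<m}f\circ S^{i+j}$ together with the contraction property $\|(g)_m\|\le\|g\|$: if $f_{n_k}\ra h$ and $f_{m_j}\ra h'$ uniformly, then, using $(h)_m=h$ for the invariant $h$, one gets $\|(f_{n_k})_{m_j}-h\|\le\|f_{n_k}-h\|\ra0$ uniformly in $j$, while $\lim_j(f_{m_j})_{n_k}=(h')_{n_k}=h'$ for each $k$; the Moore--Osgood theorem then forces $h=h'$. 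Hence $f_n$ converges uniformly to an invariant $f^*\in C(X\times X)$, giving $(3)$.

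For $(3)\Rightarrow(1)$, apply $(3)$ to $f=d\in C(X\times X)$ and set $D_n(x,y)=\frac1n\sum_{i<n}d(T^ix,T^iy)$. Then $D_n\ra D^*$ uniformly with $D^*$ continuous and $D^*(x,x)=\lim_n D_n(x,x)=0$. Since $D^*$ is uniformly continuous on the compact set $X\times X$ and vanishes on the diagonal, for each $\ep>0$ there is $\delta>0$ with $D^*(x,y)=|D^*(x,y)-D^*(x,x)|<\ep$ whenever $d(x,y)<\delta$; as $\limsup_n D_n(x,y)=D^*(x,y)$, this is exactly mean equicontinuity. (The same statement also gives $(3)\Rightarrow(2)$ directly: finitely many $f_1,\dots,f_{N-1}$ are jointly uniformly continuous, and for $n\ge N$ the $f_n$ inherit a common modulus of continuity from the continuous limit $f^*$, so the structure is robust.)

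The substantive direction is $(1)\Rightarrow(2)$. Given $f$ and $\ep>0$, uniform continuity of $f$ gives $\gamma>0$ so that $d(x,x')<\gamma$ and $d(y,y')<\gamma$ imply $|f(x,y)-f(x',y')|<\ep/2$. Splitting $[0,n)$ into the ``good'' indices and the set $B=\{i<n: d(T^ix,T^ix')\ge\gamma\ \text{or}\ d(T^iy,T^iy')\ge\gamma\}$, and using the Markov-type bound $\gamma\,|B|\le\sum_{i<n}\big(d(T^ix,T^ix')+d(T^iy,T^iy')\big)$, one obtains
\[
|f_n(x,y)-f_n(x',y')|\le \frac{\ep}{2}+\frac{2\|f\|}{\gamma}\big(D_n(x,x')+D_n(y,y')\big)
\]
for every $n$, so everything reduces to controlling $D_n(x,x')$ near the diagonal. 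This is where I expect the main obstacle: mean equicontinuity, equivalently mean-L-stability by Lemma~\ref{lem:mean-equi-mean-L-stalbe}, controls $\limsup_n D_n(x,x')$ uniformly in the pair, whereas uniform equicontinuity of the full sequence $\{f_n\}$ demands a bound valid for \emph{every} $n$, including a possibly long transient range before the asymptotic density takes effect. The plan is to exploit the uniformity of $\delta$ in the definition together with mean-L-stability: the large-$n$ range is handled by the small upper density of the set of separation times, and the finitely many small-$n$ averages by their joint uniform continuity, the delicate point being to rule out that arbitrarily close pairs exhibit separation of positive frequency over arbitrarily long initial windows. Once $\sup_n D_n(x,x')$ is shown to be small for $d(x,x')$ small, the displayed estimate yields $(2)$.
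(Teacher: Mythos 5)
Your proposal establishes $(2)\Rightarrow(3)\Rightarrow(1)$ but leaves the return implication $(1)\Rightarrow(2)$ as a ``plan'', so the equivalence is not actually proved; that is the genuine gap. Your reduction of $(1)\Rightarrow(2)$ to controlling $\sup_n D_n(x,x')$ near the diagonal is correct, and the strategy you sketch is in fact the one the paper uses: pass to the mean-L-stable formulation via Lemma~\ref{lem:mean-equi-mean-L-stalbe}; given $f$ and $\ep$, take $\delta_1$ from uniform continuity of $f$, take $\eta<\min\{\delta_1,\ep/(8\Vert f\Vert+1)\}$ and $\delta_2$ so that $d(x,y)<\delta_2$ forces $d(T^ix,T^iy)<\eta$ off a set $E$ with $\overline{D}(E)<\eta$; then for every $n$ with $\#(E\cap[0,n-1])<2\eta n$ one gets $|f_n(x)-f_n(y)|\le 4\eta\Vert f\Vert+\ep/2<\ep$, and the remaining finite range $n\le N$ is absorbed by the joint uniform continuity of $f_1,\dots,f_N$ (equivalently of $T,\dots,T^N$) after shrinking $\delta$ once more. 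The ``delicate point'' you flag --- close pairs separating with positive frequency over long initial windows --- is exactly where this argument must do its work, and you should be aware that the paper's own write-up is terse there: the threshold $N$ beyond which $\#(E\cap[0,n-1])<2\eta n$ is chosen per pair, while the final $\delta$ must be uniform, so a careful version of this step needs either a uniform choice of $N$ over all $\delta_2$-close pairs or a restructuring so that the large-$n$ estimate uses only the limsup statement $\overline{D}(E)<\eta$ while a single $\delta_3$ covers a fixed finite range. In any case, announcing the obstacle is not the same as overcoming it, and without this step the cycle does not close.

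The parts you do prove are correct, and your $(2)\Rightarrow(3)$ is genuinely different from the paper's. The paper shows every uniform subsequential limit $f^*$ is $T\times T$-invariant, hence constant on each orbit closure, invokes the Oxtoby criterion (Theorem~\ref{thm:unique-ergodic}) to conclude each orbit closure is uniquely ergodic and that $f_n\to f^*$ pointwise everywhere, and then upgrades to uniform convergence using equicontinuity; your route via the symmetric double average $(f_n)_m=(f_m)_n$, the contraction $\Vert (g)_m\Vert\le\Vert g\Vert$, and Moore--Osgood is a clean, purely functional-analytic substitute that identifies the unique limit point without any ergodic-theoretic input. The trade-off is that the paper's version yields unique ergodicity of orbit closures as a by-product, which it records and reuses as Corollary~\ref{cor:mean-eq-erg}, so if you adopt your argument you would need to rederive that corollary directly from condition (3) (which is easy). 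Your $(3)\Rightarrow(1)$, via continuity of the limit $D^*$ and its vanishing on the diagonal, is essentially the paper's argument in a slightly cleaner packaging.
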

\begin{proof}
To make the idea of the proof clearer, when proving (1)$\Rightarrow$(2) and (2)$\Rightarrow$(3),
we assume $f\in C(X)$ instead of $f\in C(X\times X)$, because if $(X,T)$ is mean equicontinuous
then so is $(X\times X,T\times T)$.

(1)$\Rightarrow$(2)
Fix $f\in C(X)$. To show that $\{f_n\}_{n=1}^\infty$ is uniformly equicontinuous, it suffices to show that
for any $\ep>0$, there exists $\delta>0$ such that $x$, $y\in X$ with $d(x,y)<\delta$
implies $|f_n(x)-f_n(y)|<\ep$ for all $n\in\N$. By the definition of $f_n$, one has
\begin{align*}
|f_n(x)-f_n(y)|= \Bigl\vert\frac{1}{n}\sum_{i=0}^{n-1} f(T^ix)-\frac{1}{n}\sum_{i=0}^{n-1} f(T^iy)\Bigr\vert
\leq \frac{1}{n} \sum_{i=0}^{n-1}\bigl\vert f(T^ix)-f(T^iy)\bigr\vert.
\end{align*}

Fix a positive number $\ep>0$. By uniform continuity of $f$,
there exists $\delta_1>0$ such that if $x$, $y\in X$ with $d(x,y)<\delta_1$
then $|f(x)-f(y)|<\frac{\ep}{2}$.
As $(X,T)$ is mean equicontinuous, it is also mean-L-stable.
Choose a positive number $\eta<\min\{\delta_1,\frac{\epsilon}{8\Vert f\Vert+1}\}$.
There is $\delta_2\in(0,\delta_1)$ such that $d(x,y)<\delta_2$
implies $d(T^nx,T^ny)<\eta$
for all $n\in\Z$ except in a set $E$ with $\overline{D}(E)<\eta$.

Choose $N$ large enough such that $\frac{1}{n}\#(E\cap[0,n-1])<2\eta$ for all $n\geq N$.
Then for every $n\geq N$ and $x,y\in X$ with $d(x,y)<\delta_2$, one has

\begin{align*}
 \frac{1}{n} \sum_{i=0}^{n-1}|f(T^ix)-f(T^iy)| & \leq
 \frac{1}{n}\Bigl(\sum_{i\in E\cap [0,n-1]} 2 \Vert f\Vert+
 \sum_{i\in [0,n-1]\setminus E}\bigl\vert f(T^ix)-f(T^iy)\bigr\vert\Big)\\
 &\leq 4\eta \Vert f\Vert +\frac{\ep}{2}<\ep.
\end{align*}

By the compactness of $X$, there exists $\delta_3>0$ such that
for every $n\in\{1,2,\dotsc,N\}$ and $x,y\in X$ with $d(x,y)<\delta_3$,
$|f_n(x)-f_n(y)|<\ep$.
Choose $0<\delta<\min\{\delta_2,\delta_3\}$.
Then for every $n\in\N$ and $x,y\in X$ with $d(x,y)<\delta$,
$|f_n(x)-f_n(y)|<\ep$.
This shows that $\{f_n\}_{n=1}^\infty$ is uniformly equicontinuous.

(2)$\Rightarrow$(3)
Clearly, $\Vert f_n\Vert\leq \Vert f\Vert$ for every $n\in\N$.
Then the sequence $\{f_n\}_{n=1}^\infty$ is uniformly bounded.
Since $\{f_n\}_{n=1}^\infty$ is uniformly equicontinuous,
by the Arzela-Ascoli Theorem there exists a subsequence $\{f_{n_k}\}_{k=1}^\infty$
which is uniformly convergent to $f^*\in C(X)$.
Then $f^*$ is $T$-invariant, that is
for every $x\in X$
\begin{align*}
\vert f^*(x)-f^*(Tx)\vert &=\Bigl\vert\lim_{k\to\infty}
\frac{1}{n_k}\Bigl(\sum_{i=0}^{n_k-1}f(T^ix)-\sum_{i=0}^{n_k-1}f\bigl(T^i (Tx)\bigr)\Bigr)\Bigr\vert\\
&=\lim_{k\to\infty} \frac{1}{n_k}\bigl\vert f(T^{n_k}x)-f(x)\bigr\vert=0
\end{align*}
By the continuity of $f^*$, $f^*|_{\overline{Orb(x,T)}}$ is constant for every $x\in X$.
Then by Theorem~\ref{thm:unique-ergodic} $(\overline{Orb(x,T)},T)$ is uniquely ergodic.
Again by Theorem~\ref{thm:unique-ergodic} $f_n\to f^*$ as $n\to\infty$ uniformly on  $(\overline{Orb(x,T)},T)$.
Since $x$ is arbitrary, one has $f_n(x)\to f^*(x)$ as $n\to\infty$ pointwise for every $x\in X$.
Then $f_n(x)\to f^*(x)$  as $n\to\infty$ uniformly on $X$, since  $\{f_n\}_{n=1}^\infty$ is uniformly equicontinuous.

(3)$\Rightarrow$(1)
Recall that $d(\cdot,\cdot)$ is the metric on $X$ and is a continuous function on $X\times X$.
Then the sequence $\{d_n\}_{n=1}^\infty$ is uniformly equicontinuous, that is
for every $\ep>0$ there exists $\delta>0$ such that $d_n(x,y)<\frac{\ep}{2}$ for every $n\in\Z$ and
every $x,y\in X$ with $d(x,y)<\delta$. Then for every $x,y\in X$ with $d(x,y)<\delta$, one has
\[\limsup_{n\to\infty}\frac{1}{n}\sum_{i=0}^{n-1}d(T^ix,T^iy)\leq \sup_{n\in\N} d_n(x,y)<\ep,\]
which implies that $(X,T)$ is mean equicontinuous.
\end{proof}

By Theorems~\ref{thm:unique-ergodic} and \ref{thm:mean-equi-unifon-conv}, we have the following corollary.
\begin{cor}[\cite{O52}] \label{cor:mean-eq-erg}
Let $(X,T)$ be a dynamical system.
If $(X,T)$ is mean equicontinuous, then for every $x\in X$, $(\overline{Orb(x,T)},T)$ is uniquely ergodic.
In particular, if $(X,T)$ is also transitive, then $(X,T)$ is uniquely ergodic.
\end{cor}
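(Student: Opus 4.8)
The plan is to derive Corollary~\ref{cor:mean-eq-erg} directly from the machinery of Theorem~\ref{thm:unique-ergodic} and Theorem~\ref{thm:mean-equi-unifon-conv}, which together give us exactly the right bridge between mean equicontinuity and unique ergodicity. The key observation is that unique ergodicity is a property of a \emph{single} subsystem, so the natural strategy is to fix an arbitrary point $x\in X$, pass to the subsystem $(\overline{Orb(x,T)},T)$, and verify the uniform-convergence criterion there.

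\medskip

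First I would note that the property of mean equicontinuity is inherited by subsystems: if $(X,T)$ is mean equicontinuous, then for any point $x$ the restriction to the closed invariant set $Y=\overline{Orb(x,T)}$ is again mean equicontinuous, since the defining $\ep$--$\delta$ condition only involves the dynamics restricted to pairs of points already in $Y$. Next, applying Theorem~\ref{thm:mean-equi-unifon-conv}, specifically the implication (1)$\Rightarrow$(3), to the subsystem $(Y,T)$: for every $f\in C(Y)$ the averages $\{f_n\}_{n=1}^\infty$ converge uniformly on $Y$ to a continuous $T$-invariant limit $f^*$. In particular $\{f_n\}$ is uniformly convergent on $Y$. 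This is precisely condition (2) of Theorem~\ref{thm:unique-ergodic} up to the constancy of the limit, so the remaining work is to argue that $f^*$ is in fact constant on $Y$.

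\medskip

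The constancy of the limit on an orbit closure is where the transitivity of the subsystem does the work. Since $Y=\overline{Orb(x,T)}$ is by construction the orbit closure of $x$, the point $x$ is a transitive point of $(Y,T)$. The limit function $f^*$ is $T$-invariant and continuous, hence it is constant along the dense orbit $Orb(x,T)$ and therefore constant on all of $Y$ by continuity. Thus $\{f_n\}$ converges uniformly to a constant on $Y$ for every $f\in C(Y)$, which is condition (2) of Theorem~\ref{thm:unique-ergodic}, so $(Y,T)=(\overline{Orb(x,T)},T)$ is uniquely ergodic. For the final assertion, if $(X,T)$ is itself transitive with transitive point $x$, then $\overline{Orb(x,T)}=X$, so the argument above applied to this $x$ yields that $(X,T)$ itself is uniquely ergodic.

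\medskip

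I do not anticipate a serious obstacle here, since both ingredients are already in hand; the only point that requires a moment's care is the inheritance of mean equicontinuity by the subsystem and the passage from a $T$-invariant continuous function to a genuine constant, which rests on transitivity of the orbit closure rather than on any measure-theoretic input.
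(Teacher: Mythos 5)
Your proposal is correct and follows essentially the same route as the paper: the authors also obtain the corollary by combining Theorem~\ref{thm:mean-equi-unifon-conv} (the uniform convergence of the averages $f_n$ to a continuous $T$-invariant limit $f^*$) with the observation, made explicitly inside the proof of that theorem, that $f^*$ is constant on each orbit closure, and then invoking the unique ergodicity criterion of Theorem~\ref{thm:unique-ergodic}. The only cosmetic difference is that you restrict to the subsystem $\overline{Orb(x,T)}$ before applying the theorem (noting mean equicontinuity passes to subsystems), whereas the paper applies it on all of $X$ and restricts the limit function afterwards.
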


It is shown in~\cite{A59} that in a mean equicontinuous system a pair of points is proximal
if and only if it is persistently proximal (proximal with density one).
We can strengthen this result as follows.

\begin{thm}\label{thm:mean-eq-BP}
Let $(X,T)$  be a dynamical system. If $(X,T)$ is mean equicontinuous, then $Q(X,T)=P(X,T)=BP(X,T)$ and
it is a closed invariant equivalence relation.
\end{thm}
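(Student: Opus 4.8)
The plan is to use the continuous $T\times T$-invariant function furnished by mean equicontinuity to give one common description of all three relations. Since the metric $d$ lies in $C(X\times X)$, Theorem~\ref{thm:mean-equi-unifon-conv}(3) applied to $f=d$ shows that the averages $d_n(x,y)=\frac1n\sum_{i=0}^{n-1}d(T^ix,T^iy)$ converge uniformly on $X\times X$ to a continuous $T\times T$-invariant function $d^*$. Because $d^*=\lim_n d_n=\limsup_n d_n$, the defining inequality of mean equicontinuity reads: for every $\gamma>0$ there is $\delta>0$ with $d(u,v)<\delta\Rightarrow d^*(u,v)<\gamma$. As we always have $Q(X,T)\supset P(X,T)\supset BP(X,T)$, it suffices to prove the chain $Q(X,T)\subset\{d^*=0\}\subset BP(X,T)$; the set $\{(x,y):d^*(x,y)=0\}$ will then coincide with all three relations, and its structure is easy to read off from $d^*$.

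First I would show $Q(X,T)\subset\{d^*=0\}$. Fix $(x,y)\in Q(X,T)$ and $\gamma>0$. Mean equicontinuity gives $\delta>0$ as above, and continuity of $d^*$ at $(x,y)$ gives $\ep_1>0$ so that $d(x,x')<\ep_1$, $d(y,y')<\ep_1$ force $|d^*(x,y)-d^*(x',y')|<\gamma$. Applying regional proximality with $\ep=\min\{\ep_1,\delta\}$ produces $x',y'$ and $n$ with $d(x,x')<\ep$, $d(y,y')<\ep$ and $d(T^nx',T^ny')<\ep\le\delta$. Then $d^*(x',y')=d^*(T^nx',T^ny')<\gamma$ by invariance and the choice of $\delta$, while continuity gives $|d^*(x,y)-d^*(x',y')|<\gamma$; hence $d^*(x,y)<2\gamma$. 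Letting $\gamma\to0$ yields $d^*(x,y)=0$.

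The main step, and the place where uniform convergence (rather than mere Ces\`aro convergence) is essential, is the inclusion $\{d^*=0\}\subset BP(X,T)$. Suppose $d^*(x,y)=0$ and fix a threshold $\ep>0$; I must bound the Banach density of $F=\{i\in\Z:d(T^ix,T^iy)\ge\ep\}$. Given any $\gamma>0$, uniform convergence yields $N$ such that $|d_L(u,v)-d^*(u,v)|<\gamma$ for all $L\ge N$ and all $(u,v)\in X\times X$. For an arbitrary interval $[M,M+L-1]$ with $L\ge N$, applying this to $(u,v)=(T^Mx,T^My)$ and using $d^*(T^Mx,T^My)=d^*(x,y)=0$ gives $\frac1L\sum_{i=M}^{M+L-1}d(T^ix,T^iy)<\gamma$. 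A Markov-type estimate then bounds $\#(F\cap[M,M+L-1])$ by $L\gamma/\ep$, uniformly in $M$. Since $N$ depends only on $\gamma$ and not on $M$, taking the limsup as $L\to\infty$ gives $BD^*(F)\le\gamma/\ep$, and letting $\gamma\to0$ gives $BD^*(F)=0$. Thus $(x,y)\in BP(X,T)$, completing the equality $Q(X,T)=P(X,T)=BP(X,T)=\{d^*=0\}$.

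Finally I would verify the structural claims directly from $d^*$. Since $d(T^ix,T^iz)\le d(T^ix,T^iy)+d(T^iy,T^iz)$, averaging and passing to the limit shows $d^*$ is a symmetric pseudometric with $d^*(x,z)\le d^*(x,y)+d^*(y,z)$ that vanishes on the diagonal; hence $\{d^*=0\}$ is reflexive, symmetric and transitive, i.e.\ an equivalence relation. It is closed because $d^*$ is continuous, and $T\times T$-invariant because $d^*$ is. I expect the delicate point throughout to be the upgrade from (upper) density zero to Banach density zero in the second inclusion, which is exactly what the uniform, rather than merely pointwise, convergence $d_n\to d^*$ buys us.
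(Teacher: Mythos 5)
Your proof is correct, but it takes a genuinely different route from the paper on the decisive step. The paper splits the statement into two inclusions: for $Q(X,T)\subset P(X,T)$ it uses essentially your argument (regional-proximality witnesses plus the triangle inequality and mean equicontinuity), but for $P(X,T)\subset BP(X,T)$ it goes through measure theory --- it invokes Corollary~\ref{cor:mean-eq-erg} to get unique ergodicity of $\overline{Orb((x,y),T\times T)}$, observes that proximality forces the orbit closure to meet $\Delta_X$ so that the support of the unique invariant measure lies in $\Delta_X$, and then cites \cite{LT13} to conclude Banach proximality; the structural claims are likewise obtained by citing general facts ($Q$ is always closed, $BP$ is always an invariant equivalence relation). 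You instead package everything into the single continuous $T\times T$-invariant function $d^*=\lim_n d_n$ supplied by Theorem~\ref{thm:mean-equi-unifon-conv}(3), and your key observation --- that the \emph{uniform} convergence $d_n\to d^*$ lets you control the Ces\`aro average over an arbitrary window $[M,M+L-1]$ by evaluating $d_L$ at $(T^Mx,T^My)$ and using invariance of $d^*$, whence a Markov estimate upgrades density zero to \emph{Banach} density zero --- is exactly the right mechanism and makes the inclusion $\{d^*=0\}\subset BP(X,T)$ self-contained, with no appeal to unique ergodicity or to \cite{LT13}. Your derivation of the closed invariant equivalence relation structure from the fact that $d^*$ is a continuous invariant pseudometric is also cleaner and more informative than the paper's citation of general properties. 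The trade-off is that the paper's measure-theoretic detour reuses machinery (Corollary~\ref{cor:mean-eq-erg} and the results of \cite{LT13}) that it needs elsewhere anyway, whereas your argument is more elementary and quantitative; both are complete proofs.
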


\begin{proof} It is clear that $Q(X,T)\supset P(X,T)\supset BP(X,T)$. We are going to show that
$P(X,T)=BP(X,T)$ and $Q(X,T)=P(X,T)$.

Assume first that $(x,y)\in P(X,T)$. Since $(X\times X,T\times T)$ is also mean-L-stable,
by Corollary~\ref{cor:mean-eq-erg} $\overline{Orb((x,y),T\times T)}$ is uniquely ergodic.
The proximality of $(x,y)$ implies that
$\overline{Orb((x,y),T\times T)}\cap \Delta_X\neq\emptyset$, which again implies that
$\supp(\overline{Orb((x,y),T\times T)})\subset \Delta_X$ and thus $(x,y)\in BP(X,T)$ by \cite{LT13}.

Assume now that $(x,y)\in Q(X,T)$. As $(X,T)$ is mean equicontinuous, for every $\ep>0$, there exists a $\delta>0$
such that whenever $z_1,z_2\in X$ with $d(z_1,z_2)<\delta$,
\[\limsup_{n\to\infty}\frac{1}{n}\sum_{i=0}^{n-1}d(T^iz_1,T^iz_2)<\frac{\ep}{3}.\]
For this $\delta>0$,
there exist $x',y'\in X$ and $k\in\N$ such that $d(x,x')<\delta$, $d(y,y')<\delta$ and
$d(T^k x',T^ky')<\delta$. Then
\begin{align*}
  \limsup_{n\to\infty}\frac{1}{n}\sum_{i=0}^{n-1}d(T^ix,T^iy)
&\leq   \limsup_{n\to\infty}\frac{1}{n}\sum_{i=0}^{n-1}(d(T^ix,T^ix')+d(T^iy',T^iy)+d(T^ix',T^iy'))\\
&\leq \limsup_{n\to\infty}\frac{1}{n}\sum_{i=0}^{n-1} d(T^ix,T^ix')+
\limsup_{n\to\infty}\frac{1}{n}\sum_{i=0}^{n-1}d(T^iy',T^iy)\\
&\qquad +\limsup_{n\to\infty}\frac{1}{n}\sum_{i=0}^{n-1} d(T^i(T^kx'),T^i(T^ky'))\\
&< \frac{\ep}{3}+\frac{\ep}{3}+\frac{\ep}{3}=\ep.
\end{align*}
Since $\ep$ is arbitrary, one has
\[\limsup_{n\to\infty}\frac{1}{n}\sum_{i=0}^{n-1}d(T^ix,T^iy)=0,\]
which implies that $(x,y)$ is proximal.

In general $Q(X,T)$ is a closed relation and $BP(X,T)$ is an invariant equivalence relation,
which implies $Q(X,T)$ is a closed invariant equivalence relation when $(X,T)$ is mean equicontinuous.
\end{proof}

Theorem~\ref{thm:mean-eq-BP} has the following direct corollaries.
\begin{cor}\label{cor:mean-eq}
Let $(X,T)$ be a dynamical system.
\begin{enumerate}
  \item If $(X,T)$ is mean equicontinuous, then
\begin{enumerate}
  \item it is a proximal extension of its maximal
equicontinuous factor;
\item the maximal distal factor and the maximal
equicontinuous factor coincide.
\end{enumerate}
\item If $(X,T)$ is mean equicontinuous and distal, then it is equicontinuous.
\end{enumerate}
\end{cor}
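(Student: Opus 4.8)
The plan is to deduce all three assertions directly from Theorem~\ref{thm:mean-eq-BP}, which tells us that for a mean equicontinuous system the three relations coincide, $Q(X,T)=P(X,T)=BP(X,T)$, and that this common relation is a closed invariant equivalence relation. The key supporting facts, recalled in the preliminaries from~\cite{EG60}, are that the equicontinuous structure relation is the \emph{smallest} closed invariant equivalence relation containing $Q(X,T)$, and that the distal structure relation is the smallest closed invariant equivalence relation containing $P(X,T)$.

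For part (1)(a), I would argue that since $Q(X,T)$ is itself a closed invariant equivalence relation containing $Q(X,T)$, the minimality clause forces the equicontinuous structure relation to equal $Q(X,T)$. Writing $\pi\colon (X,T)\to (Y,S)$ for the factor map onto the maximal equicontinuous factor, we then have $R_\pi=Q(X,T)=P(X,T)$; hence $\pi(x)=\pi(y)$ implies $(x,y)\in P(X,T)$, i.e. $(x,y)$ is proximal, so $\pi$ is a proximal factor map. For part (1)(b), the same reasoning applied to $P(X,T)$ shows that the distal structure relation equals $P(X,T)$, and since Theorem~\ref{thm:mean-eq-BP} gives $P(X,T)=Q(X,T)$, the distal structure relation coincides with the equicontinuous structure relation; therefore the maximal distal factor and the maximal equicontinuous factor are the same.

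For part (2), if $(X,T)$ is additionally distal then $P(X,T)=\Delta_X$, and Theorem~\ref{thm:mean-eq-BP} forces $Q(X,T)=\Delta_X$ as well. Since the smallest closed invariant equivalence relation containing $\Delta_X$ is $\Delta_X$ itself, the equicontinuous structure relation is trivial, which means the maximal equicontinuous factor is (conjugate to) $(X,T)$; equivalently, $(X,T)$ is equicontinuous.

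I do not anticipate a serious obstacle, since each assertion is an immediate consequence of the identity $Q(X,T)=P(X,T)$ combined with the structure-relation descriptions of~\cite{EG60}. The only point requiring a little care is to confirm that $Q(X,T)$ being a closed invariant equivalence relation actually identifies it with the equicontinuous structure relation (and $P(X,T)$ with the distal structure relation) through the minimality clause, rather than merely being contained in it; this is precisely where the equivalence-relation conclusion of Theorem~\ref{thm:mean-eq-BP}, and not just the set-theoretic equalities, is used.
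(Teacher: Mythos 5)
Your proposal is correct and follows exactly the route the paper intends: the paper states this as a ``direct corollary'' of Theorem~\ref{thm:mean-eq-BP} without further argument, and your deduction---identifying the equicontinuous (resp.\ distal) structure relation with $Q(X,T)$ (resp.\ $P(X,T)$) via the minimality clause once Theorem~\ref{thm:mean-eq-BP} guarantees these are already closed invariant equivalence relations---is precisely the implicit argument. No gaps.
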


\begin{cor}\label{cor:mean-eq-wm}
Let $(X,T)$ be a dynamical system. Assume that $P(X,T)$ is dense in $X\times X$,
for example $(X,T)$ is weakly mixing or transitive with a fixed point \cite{HY02}.
Then it is mean equicontinuous if and only if it is strongly proximal.
\end{cor}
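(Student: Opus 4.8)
The plan is to reduce both implications to a single clean fact: under either hypothesis $(X,T)$ is uniquely ergodic with unique invariant measure a point mass $\delta_p$, and moreover $\frac1n\sum_{i=0}^{n-1} d(T^i x,p)\to 0$ uniformly in $x$. For the reverse direction, suppose $(X,T)$ is strongly proximal, so $(M(X),T)$ is proximal. An invariant measure of $(X,T)$ is exactly a fixed point of the induced map on $M(X)$, and any two fixed points of a proximal system must coincide (a proximal pair of fixed points is at distance $0$); hence $(X,T)$ is uniquely ergodic. Embedding $X$ into $M(X)$ by $x\mapsto\delta_x$ shows $(X,T)$ is itself proximal, so by the standard fact that a proximal system has a (necessarily unique) fixed point $p$, the measure $\delta_p$ is invariant and therefore is the unique invariant measure. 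Applying Theorem~\ref{thm:unique-ergodic} to $g(\cdot)=d(\cdot,p)\in C(X)$ gives $\frac1n\sum_{i=0}^{n-1} d(T^i x,p)\to\int g\,d\delta_p=0$ uniformly on $X$, and then $d(T^i x,T^i y)\le d(T^i x,p)+d(p,T^i y)$ forces $\limsup_n \frac1n\sum_{i=0}^{n-1} d(T^i x,T^i y)=0$ for all $x,y$, so $(X,T)$ is mean equicontinuous. Note this direction does not use the density of $P(X,T)$.

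For the forward direction, suppose $(X,T)$ is mean equicontinuous and $P(X,T)$ is dense in $X\times X$. By Theorem~\ref{thm:mean-eq-BP}, $P(X,T)=BP(X,T)$ is a closed relation; being also dense, it equals $X\times X$, so $(X,T)$ is proximal and again carries a unique fixed point $p$. For any $x\in X$ the pair $(x,p)$ is proximal, so the computation carried out in the proof of Theorem~\ref{thm:mean-eq-BP} yields $\limsup_n \frac1n\sum_{i=0}^{n-1} d(T^i x,p)=0$ (using $T^i p=p$). Since these averages $g_n(x):=\frac1n\sum_{i=0}^{n-1} d(T^i x,p)$ are nonnegative, this means $g_n(x)\to 0$ pointwise; as Theorem~\ref{thm:mean-equi-unifon-conv} guarantees that $\{g_n\}$ converges uniformly, its limit must be the zero function, so $g_n\to 0$ uniformly on $X$. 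This recovers exactly the key estimate of the reverse direction.

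It remains to promote the uniform control of $d(\cdot,p)$ on points to proximality of arbitrary measures, which is where the substance lies. Given $\mu,\nu\in M(X)$, I would integrate the estimate: setting $b_i=\int d(T^i x,p)\,d\mu(x)+\int d(T^i x,p)\,d\nu(x)\ge 0$, uniform convergence $g_n\to 0$ gives $\frac1n\sum_{i=0}^{n-1} b_i\to 0$, so some sequence $n_k\to\infty$ has $b_{n_k}\to 0$, whence $\int d(T^{n_k}x,p)\,d\mu\to 0$ and $\int d(T^{n_k}x,p)\,d\nu\to 0$ simultaneously. Testing against Lipschitz functions (which are dense in $C(X)$ and therefore determine weak$^*$ limits) then gives $T^{n_k}\mu\to\delta_p$ and $T^{n_k}\nu\to\delta_p$, so $(\mu,\nu)$ is a proximal pair in $M(X)$. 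As $\mu,\nu$ are arbitrary, $(M(X),T)$ is proximal, i.e.\ $(X,T)$ is strongly proximal.

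The main obstacle is precisely this last step: transferring the uniform Cesàro control of $d(\cdot,p)$ at the level of points to genuine proximality of two arbitrary measures. The decisive device is to combine the two integrals into a single nonnegative sequence $b_i$, so that one common subsequence $n_k$ simultaneously drives $T^{n_k}\mu$ and $T^{n_k}\nu$ to the shared limit $\delta_p$; without forming this common sequence one would only obtain separate subsequences for $\mu$ and $\nu$, which is not enough to witness proximality of the pair $(\mu,\nu)$.
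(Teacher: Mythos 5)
Your argument is correct, but it is a genuinely more self-contained route than the paper's, which disposes of the corollary in two lines by citation: for the forward direction the paper uses Theorem~\ref{thm:mean-eq-BP} to get $BP(X,T)=X\times X$ and then quotes \cite{LT13} for ``$BP(X,T)=X\times X$ implies strongly proximal''; for the reverse direction it quotes \cite{LT13} for ``strongly proximal iff $(X\times X,T\times T)$ is uniquely ergodic'' and applies Theorem~\ref{thm:mean-equi-unifon-conv}. You replace both black boxes by direct arguments organized around the unique fixed point $p$ of a proximal $\mathbb{Z}_+$-system: unique ergodicity with invariant measure $\delta_p$, the uniform Ces\`aro convergence $\frac1n\sum_{i=0}^{n-1}d(T^ix,p)\to 0$, and (the real content) the common-subsequence device that drives $T^{n_k}\mu$ and $T^{n_k}\nu$ to $\delta_p$ simultaneously, which is exactly what is needed to witness proximality of the pair $(\mu,\nu)$ in $M(X)$. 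What the paper's proof buys is brevity and the sharper information that the relation involved is Banach proximality; what yours buys is independence from \cite{LT13} and an explicit dynamical picture (everything is Ces\`aro-attracted to the fixed point). Two small points you should make explicit: the existence of the fixed point of a proximal system deserves a line of proof (take a minimal point $x$; proximality of $(x,Tx)$ gives $n_k$ with $d(T^{n_k}x,T^{n_k+1}x)\to 0$, and any limit point of $T^{n_k}x$ is fixed by continuity), and when you invoke Theorem~\ref{thm:mean-equi-unifon-conv} for the uniform convergence of $g_n$ you should view $d(\cdot,p)$ as the element $(x,y)\mapsto d(x,p)$ of $C(X\times X)$, since that is how the theorem is stated.
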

\begin{proof}
First assume that $(X,T)$ is mean equicontinuous. Then by Theorem \ref{thm:mean-eq-BP}
$BP(X,T)=X\times X$ which implies that $(X,T)$ is strongly proximal by \cite{LT13}.
Now assume that $(X,T)$ is strongly proximal. It is shown in that \cite{LT13} that $(X,T)$ is strongly proximal
if and only if $(X\times X,T\times T)$ is uniquely ergodic.
Thus $(X,T)$ is mean equicontinuous by Theorem~\ref{thm:mean-equi-unifon-conv}.
\end{proof}

In \cite{S82} the author asked the following question: does every ergodic invariant measure on
a mean equicontinuous system have discrete spectrum?
We show that this question has a positive answer. We note that the proof is inspired by~\cite[Theorem~4.4]{HLSY03}.

\begin{thm}\label{thm:mean-eq-discrete-spectrum}
Let $(X,T)$ be a dynamical system.
If $(X,T)$ is mean equicontinuous, then every ergodic invariant measure on $(X,T)$ has discrete spectrum
and hence the topological entropy of $(X,T)$ is zero.
\end{thm}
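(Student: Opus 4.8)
The plan is to show that every ergodic invariant measure $\mu$ on a mean equicontinuous system $(X,T)$ has discrete spectrum by producing an orthonormal basis of $L^2(\mu)$ consisting of eigenfunctions, and then to deduce zero topological entropy. The central idea, following the strategy of \cite[Theorem~4.4]{HLSY03}, is that the averaged functions $f_n(x)=\frac{1}{n}\sum_{i=0}^{n-1}f(T^ix)$ converge uniformly to continuous $T$-invariant limits (Theorem~\ref{thm:mean-equi-unifon-conv}), and this strong regularity forces the measure-theoretic behaviour to be almost periodic. First I would fix an ergodic measure $\mu$ and recall that $\mu$ is supported on $\overline{Orb(x,T)}$ for $\mu$-almost every $x$; by Corollary~\ref{cor:mean-eq-erg} each such orbit closure is uniquely ergodic, so $\mu$ is in fact the unique invariant measure on its support.

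The key technical step is to verify the \emph{almost periodicity} of each function $f\in L^2(\mu)$ in the sense needed for the Kronecker--Halmos--von Neumann criterion. I would argue that for every $f\in C(X)$ and every $\ep>0$, mean equicontinuity gives a $\delta>0$ controlling the averaged Birkhoff sums, and hence the orbit $\{U_T^n f\}_{n\in\Z}$ of $f$ under the Koopman operator $U_T$ is precompact in $L^2(\mu)$; equivalently, $f$ lies in the closure of the span of eigenfunctions. The natural route is to use the uniform convergence of $f_n$ together with the uniform equicontinuity of the family $\{f_n\}$ to show that the time-averages of the correlation functions $x\mapsto f(T^nx)\overline{g(x)}$ behave almost periodically in $n$, which is exactly the condition that the maximal spectral type of $(X,T,\mu)$ is discrete. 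Concretely, I expect to realize this by passing to the product system $(X\times X,T\times T)$, which is again mean equicontinuous by Lemma~\ref{lem:mean-prod}, and applying Theorem~\ref{thm:mean-equi-unifon-conv} to functions of the form $(x,y)\mapsto f(x)\overline{f(y)}$ so that the diagonal measure controls the off-diagonal correlations.

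The main obstacle will be translating the topological uniform convergence statement of Theorem~\ref{thm:mean-equi-unifon-conv} into a genuinely measure-theoretic spectral conclusion, since discrete spectrum is an $L^2(\mu)$ property and a priori $C(X)$ is only dense in $L^2(\mu)$. The delicate point is to show that the precompactness of Koopman orbits holds not just for continuous $f$ but extends to all of $L^2(\mu)$, and that the resulting eigenfunctions actually span. I would handle this by establishing precompactness on the dense subspace $C(X)\subset L^2(\mu)$ and then using that the set of almost periodic vectors (vectors with precompact orbit) is a closed $U_T$-invariant subspace, so once it is dense it is everything; the classical theorem that the almost periodic subspace coincides with the closed span of eigenfunctions (the Hilbert space decomposition into the Kronecker factor and its orthocomplement) then yields discrete spectrum.

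Finally, for the entropy statement, I would invoke the cited fact that an ergodic system with discrete spectrum is conjugate to a rotation on a compact abelian group and hence has zero measure-theoretic entropy; since this holds for \emph{every} ergodic invariant measure, the variational principle gives that the topological entropy of $(X,T)$ equals $\sup_{\mu}h_\mu(T)=0$.
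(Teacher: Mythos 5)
Your overall strategy --- reduce discrete spectrum to the claim that every $f\in C(X)$ is an almost periodic vector for the Koopman operator, then use that the almost periodic vectors form a closed $U_T$-invariant subspace containing the dense set $C(X)$ and coincide with the closed span of eigenfunctions --- is a legitimate and genuinely different route from the paper's. The paper instead disintegrates $\mu$ over the unique invariant measure $\nu$ of the maximal equicontinuous factor, uses Theorem~\ref{thm:mean-eq-BP} to see that every pair in a fiber of $\pi$ is Banach proximal, deduces $\supp(\mu\times_Y\mu)\subset\Delta_X$ and hence that almost every conditional measure $\mu_y$ is a point mass, so that $\pi$ is a measure isomorphism onto a group rotation. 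Your route avoids the maximal equicontinuous factor entirely, which is attractive; but as written it has a genuine gap at its central step.

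The gap: you assert that mean equicontinuity ``hence'' gives precompactness of $\{U_T^nf\}_{n\ge 0}$ in $L^2(\mu)$ for $f\in C(X)$, and your proposed handling of the ``main obstacle'' (establish precompactness on the dense subspace $C(X)$) simply restates this claim. These are different kinds of statements: mean equicontinuity compares the orbits of two \emph{nearby points} $x,y$, whereas precompactness of the Koopman orbit requires controlling $\Vert f\circ T^n-f\circ T^m\Vert_{L^2(\mu)}^2=\int|f(T^{n-m}x)-f(x)|^2\,d\mu(x)$, i.e.\ comparing two \emph{time translates along a single orbit}. Neither Theorem~\ref{thm:mean-equi-unifon-conv} nor the passage to $(X\times X,T\times T)$ converts the first into the second. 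The step can be repaired, but it needs real work that the proposal does not contain: introduce the Besicovitch pseudometric $\bar d(x,y)=\limsup_{n\to\infty}\frac1n\sum_{i=0}^{n-1}d(T^ix,T^iy)$; note that mean equicontinuity makes the identity $(X,d)\to(X,\bar d)$ uniformly continuous, so $(X,\bar d)$ is totally bounded and $T$ acts on it (and on its completion) as an isometry; use that every orbit closure of an isometry of a compact metric space is minimal to conclude that $\{n\colon \bar d(T^nx_0,x_0)<\varepsilon\}$ is syndetic for a point $x_0$ generic for $\mu$; and finally use unique ergodicity of $(\overline{Orb(x_0,T)},T)$ (Corollary~\ref{cor:mean-eq-erg}) together with uniform continuity of $f$ to convert the orbit average $\limsup_{N\to\infty}\frac1N\sum_{j=0}^{N-1}|f(T^{n+j}x_0)-f(T^jx_0)|^2$ into $\Vert U_T^nf-f\Vert_{L^2(\mu)}^2$, yielding a syndetic set of $\varepsilon$-almost periods and hence precompactness. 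Without some such argument the proof does not go through; the remaining reductions (closedness of the almost periodic subspace, the Kronecker decomposition, and zero entropy via the variational principle) are standard and fine.
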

\begin{proof}
Let $\mu$ be an ergodic invariant measure on $(X,T)$.
Without loss of generality, assume that $\supp(\mu)=X$ and
then $(X,T)$ is uniquely ergodic by Corollary~\ref{cor:mean-eq-erg}.
Let $(Y,S)$ be the maximal equicontinuous factor of $(X,T)$ and $\pi:(X,T)\to (Y,S)$ be the factor map.
Then $\pi$ is proximal by Theorem \ref{thm:mean-eq-BP}.

Let $\nu$ be the unique invariant measure on $(Y,S)$. We have $\pi(\mu)=\nu$.
We consider the disintegration of $\mu$ over $\nu$.
That is, for a.e. $y\in Y$ we have a measure $\mu_y$ on $X$ such that $\supp(\mu_y)\subset \pi^{-1}(y)$
and $\mu=\int_{y\in Y}\mu_y d\nu$.
Let $W=\{(x,y)\in X\times X: \pi(x)=\pi(y)\}$.
As $\supp(\mu_y)\subset \pi^{-1}(y)$, a.e. $y\in Y$
we have $\supp(\mu_y\times\mu_y)\subset \pi^{-1}(y)\times \pi^{-1}(y)\subset W$, a.e. $y\in Y$.
Let $\mu\times_Y\mu=\int_{y\in Y}\mu_y\times\mu_yd\nu$.
Then $\mu\times_Y\mu$ is an invariant measure on $(X\times X,T\times T)$.
Moreover,
\[\mu\times_Y\mu(W)=\int_{y\in Y} \mu_y\times\mu_y(W)d\nu=1,\]
then $\supp(\mu\times_Y\mu)\subset W$.
By Theorem~\ref{thm:mean-eq-BP} every point in $W$ is Banach proximal. Thus we have $\supp(\mu\times_Y\mu)\subset \Delta_X$ (otherwise, there
exists a point $z$ in $W\setminus \Delta_X$ which returns to any neighborhood $U$ of $z$ with positive Banach density),
and $\mu\times_Y\mu(\Delta_X)=1$. Since
\[\mu\times_Y\mu(\Delta_X)=\int_{y\in Y}\mu_y\times \mu_y(\Delta_X)d\nu=1,\]
we have $\mu_y\times\mu_y(\Delta_X)=1$ a.e. $y\in Y$. By Fubini's
theorem we get that for a.e. $y\in Y$, $\mu_y$ is a combination of
countably many  atomic measures. Using the fact
$\mu_y\times\mu_y(\Delta_X)=1$, we conclude that for a.e. $y\in Y$,
there exists a point $c_y\in \pi^{-1}(y)$ such that
$\mu_y=\delta_{c_y}$.

Let $Z_0$ be the collection of $y\in Y$ such that $\mu_y$ is not equal to $\delta_x$ for
any $x\in X$. Then $\nu(\cup_{i\in \Z}S^{-i}Z_0)=0$. Let $Y_0=Y\setminus \cup_{i\in \Z}S^{-i}Z_0$
and $X_0=\{c_y:y\in Y_0\}$.
Then $\nu(Y_0)=1$. Now we show that $X_0$ is a measurable set. In
fact, the map $y\mapsto\mu_y$ from $Y_0$ to $M(X)$ is measurable and
$x\mapsto \delta_x$ is an embedding. Since $Y_0$ is a measurable set and
maps are 1-1, it follows from Souslin's theorem that $X_0$ is a
measurable set, and it is clear that $\mu(X_0)=\mu(\pi^{-1}Y_0)=\nu(Y_0)=1$. By the same
argument, $\pi: X_0\to Y_0$ is an isomorphism. This
shows that $\mu$ is isomorphic to $\nu$, and thus has discrete
spectrum.
\end{proof}

\begin{rem} We have the following remarks.
\begin{enumerate}
\item If $(X\times X,T\times T)$ is uniquely ergodic,
then by Theorems~\ref{thm:unique-ergodic} and \ref{thm:mean-equi-unifon-conv} $(X,T)$ is mean equicontinuous.
It is shown in \cite{LT13} that a dynamical system is strongly proximal
if and only if $(X\times X,T\times T)$ is uniquely ergodic.
There exist some strongly mixing systems which are strongly proximal, and thus they are also mean equicontinuous.
Since there exists a uniquely ergodic minimal system with positive entropy \cite{Ha67},
unique ergodicity does not imply mean equicontinuity by Theorem~\ref{thm:mean-eq-discrete-spectrum}.

\item
The authors in~\cite{HLY} have studied equicontinuous subsets and equicontinuity with respect to an invariant measure.
Let $(X,T)$ be a dynamical system and $K$ be a subset of $X$.
We say that $K$ is \emph{equicontinuous} if for any $\ep>0$ there is $\delta>0$
such that when $x,y\in K$ with $d(x,y)<\delta$, then $d(T^nx,T^ny)<\ep$ for all $n\geq 0$.
Let $\mu\in M(X,T)$. We say that $(X,T)$ is {\it $\mu$-equicontinuous} if for any $\tau>0$ there is
a $T$-equicontinuous compact subset $K$ of $X$ satisfying $\mu(K)>1-\tau$.
It is shown in~\cite[Corollary 5.6]{HLY} that if $(X,T)$ is $\mu$-equicontinuous then
$\mu$ has discrete spectrum. Using the same idea, Garc\'ia-Ramos
in a recent preprint \cite{Felipe} defines the notion of {\it $\mu$-mean equicontinuous},
and shows that $(X,T,\mu)$ is $\mu$-mean equicontinuous if and only if $\mu$ has discrete spectrum.

\item The notion of {\it tightness} was defined by Furstenberg, see \cite{OW}. Since tightness is an
isomorphic invariant \cite[Proposition 3]{OW}, it is easy to see that
if $(X,T)$ is mean equicontinuous and $\mu\in M(X,T)$ is ergodic, then $(X,T,\mu)$ is tight.
\end{enumerate}
\end{rem}

In~\cite{EG60}, Ellis and Gottschalk proved the existence of a maximal equicontinuous factor in dynamical system.
It is easy to see that mean equicontinuity satisfies the Remarks 6 and 7 in~\cite{EG60},
i.e. mean equicontinuity is preserved under subsystems and products, so we have the following result.
\begin{thm}\label{thm:max-mean-equi-factor}
Each dynamical system admits a maximal mean equicontinuous factor.
\end{thm}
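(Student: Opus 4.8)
The plan is to follow the general scheme of Ellis and Gottschalk~\cite{EG60}: form the smallest closed invariant equivalence relation whose quotient is mean equicontinuous, and then verify that this quotient does retain mean equicontinuity. Concretely, let $\mathcal R$ be the collection of all closed $T\times T$-invariant equivalence relations $R$ on $X$ for which the quotient $(X/R,T_R)$ is mean equicontinuous. This collection is non-empty, since $R=X\times X$ yields the one-point system, which is trivially mean equicontinuous. Put $R_{me}=\bigcap_{R\in\mathcal R}R$. An intersection of closed sets is closed, an intersection of $T\times T$-invariant sets is $T\times T$-invariant, and an intersection of equivalence relations is an equivalence relation, so $R_{me}$ is again a closed invariant equivalence relation and determines a factor $\pi_{R_{me}}\colon(X,T)\to(X/R_{me},T_{R_{me}})$. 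Once I show that $(X/R_{me},T_{R_{me}})$ is mean equicontinuous, its minimality inside $\mathcal R$ makes it maximal: any mean equicontinuous factor corresponds to some $R\in\mathcal R$, hence to a relation containing $R_{me}$, so that factor factors through $X/R_{me}$.

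The crux is therefore to prove that $(X/R_{me},T_{R_{me}})$ is itself mean equicontinuous, and the first step is to replace the possibly uncountable intersection by a countable one. Since $X$ is compact metric, $X\times X$ is second countable and hence Lindel\"of; the open cover $\{(X\times X)\setminus R:R\in\mathcal R\}$ of the open set $(X\times X)\setminus R_{me}$ admits a countable subcover, yielding relations $R_1,R_2,\dotsc\in\mathcal R$ with $\bigcap_{k\in\N}R_k=R_{me}$. Now consider $\Phi\colon X\to\prod_{k\in\N}X/R_k$ defined by $\Phi(x)=(\pi_{R_k}(x))_{k\in\N}$. This is continuous and intertwines the actions, so its image $\Phi(X)$ is a closed invariant subsystem of the countable product $\prod_{k\in\N}(X/R_k,T_{R_k})$; moreover $\Phi(x)=\Phi(x')$ holds precisely when $(x,x')\in\bigcap_k R_k=R_{me}$, so $(X/R_{me},T_{R_{me}})$ is conjugate to $(\Phi(X),T)$.

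It remains to observe that mean equicontinuity passes to this image, i.e.\ that it is preserved under subsystems and under countable products. Preservation under subsystems is immediate, since the defining $\delta$--$\ep$ condition restricts verbatim to any closed invariant subset. For countable products, equip $\prod_k X/R_k$ with a metric $D((u_k),(v_k))=\sum_k 2^{-k}d_k(u_k,v_k)$, where each $d_k$ is a compatible metric on $X/R_k$ bounded by $1$ (mean equicontinuity is independent of the choice of compatible metric, all such metrics being uniformly equivalent on a compact space). Given $\ep>0$, I would truncate the tail $\sum_{k>N}2^{-k}$ below $\ep/2$, apply mean equicontinuity of each of the finitely many factors $X/R_k$ with $k\le N$ to select a single $\delta>0$ forcing those coordinates to satisfy their individual estimates, and then combine the subadditivity of $\limsup$ over the finite head with the uniform tail bound to get $\limsup_{n\to\infty}\frac1n\sum_{i=0}^{n-1}D(T^iu,T^iv)<\ep$ whenever $D(u,v)<\delta$; this is exactly the countable-product analogue of Lemma~\ref{lem:mean-prod}. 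Hence $\Phi(X)$, and therefore $(X/R_{me},T_{R_{me}})$, is mean equicontinuous, and the construction is complete. The only genuinely delicate point is this last countable-product step: finite products are covered by Lemma~\ref{lem:mean-prod}, but one must check that the averaged estimate survives passage to an infinite product, which is where the tail truncation and the subadditivity of $\limsup$ are needed.
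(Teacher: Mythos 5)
Your proof is correct and takes essentially the same route as the paper, whose entire argument is the citation of the Ellis--Gottschalk construction from \cite{EG60} together with the observation that mean equicontinuity is preserved under subsystems and products (cf.\ Lemma~\ref{lem:mean-prod}). You have simply written out the details that the paper leaves to the reference --- the intersection of all suitable closed invariant equivalence relations, the Lindel\"of reduction to a countable intersection, the embedding into a countable product, and the tail-truncation argument upgrading Lemma~\ref{lem:mean-prod} to countable products --- and each of these steps checks out.
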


To end the section we show that mean continuity is preserved by factor maps.

\begin{thm}\label{thm:mean-equi-factor}
Let $\pi:(X,T)\to (Y,S)$ be a factor map between two dynamical systems.
If $(X,T)$ is mean equicontinuous, then so is $(Y,S)$.
\end{thm}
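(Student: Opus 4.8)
The plan is to avoid working directly with the definition of mean equicontinuity on $(Y,S)$ and instead to use the functional characterization of Theorem~\ref{thm:mean-equi-unifon-conv}. The obstacle to a naive argument is clear: if $d(y_1,y_2)$ is small in $Y$, any preimages $x_1\in\pi^{-1}(y_1)$ and $x_2\in\pi^{-1}(y_2)$ may be far apart in $X$, since a factor map need not admit a continuous section. Thus one cannot simply lift a pair in $Y$ to a nearby pair in $X$ and transport the mean estimate. The characterization through uniform convergence of Birkhoff averages sidesteps this entirely, because it is phrased in terms of continuous functions, which pull back and push forward along $\pi$ without any metric comparison of preimages.

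Concretely, by Theorem~\ref{thm:mean-equi-unifon-conv} it suffices to show that for every $g\in C(Y\times Y)$ the averages $g_n$ converge uniformly on $Y\times Y$ to some $S\times S$-invariant continuous function. Given such a $g$, I would set $h=g\circ(\pi\times\pi)\in C(X\times X)$ and record the key identity
\[
h_n(x_1,x_2)=\frac{1}{n}\sum_{i=0}^{n-1} g\bigl(S^i\pi(x_1),S^i\pi(x_2)\bigr)=g_n\bigl(\pi(x_1),\pi(x_2)\bigr),
\]
which follows from the intertwining relation $\pi\circ T=S\circ\pi$. Since $(X,T)$ is mean equicontinuous, Theorem~\ref{thm:mean-equi-unifon-conv} gives that $h_n$ converges uniformly on $X\times X$ to a $T\times T$-invariant function $h^*\in C(X\times X)$.

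It remains to push this convergence down to $Y$. Because $\pi\times\pi$ is onto, for any $y_1,y_2\in Y$ one may choose preimages and use the identity above to obtain $|g_n(y_1,y_2)-g_m(y_1,y_2)|\le\Vert h_n-h_m\Vert$; hence $\{g_n\}$ is uniformly Cauchy and converges uniformly to some $g^*\in C(Y\times Y)$ with $g^*\circ(\pi\times\pi)=h^*$. Finally, the $T\times T$-invariance of $h^*$ together with surjectivity of $\pi\times\pi$ and the intertwining relation yields the $S\times S$-invariance of $g^*$: for preimages $x_i$ of $y_i$,
\[
g^*(Sy_1,Sy_2)=h^*(Tx_1,Tx_2)=h^*(x_1,x_2)=g^*(y_1,y_2).
\]
Applying the implication (3)$\Rightarrow$(1) of Theorem~\ref{thm:mean-equi-unifon-conv} to $(Y,S)$ then shows that $(Y,S)$ is mean equicontinuous. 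I expect the only subtle point to be the clean transfer of uniform convergence and of invariance across the surjection $\pi\times\pi$; everything else is formal.
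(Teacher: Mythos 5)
Your proof is correct, but it takes a genuinely different route from the paper's. The paper argues by contradiction at the level of the metric: assuming $(Y,S)$ is not mean equicontinuous, it extracts sequences $y_k,z_k\to y$ with $\limsup_n\frac1n\sum_{i=0}^{n-1}d(S^iy_k,S^iz_k)\ge 2\delta$, reduces via the triangle inequality to comparing $y_k$ with the limit $y$, lifts preimages $x_k$ of $y_k$ and passes to a convergent subsequence $x_k\to x$ with $\pi(x)=y$ (this is how it handles the obstruction you identify --- nearby preimages are produced by compactness, not by a section), and then transfers a small-upper-density estimate on the set of ``bad'' times from $X$ to $Y$ using uniform continuity of $\pi$ and the mean-L-stable reformulation of Lemma~\ref{lem:mean-equi-mean-L-stalbe}. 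Your argument instead routes everything through the functional characterization of Theorem~\ref{thm:mean-equi-unifon-conv}: the identity $h_n=g_n\circ(\pi\times\pi)$ for $h=g\circ(\pi\times\pi)$ is exact, surjectivity of $\pi\times\pi$ gives $\Vert g_n-g_m\Vert\le\Vert h_n-h_m\Vert$, and the limit $g^*$ inherits continuity and $S\times S$-invariance, so (3)$\Rightarrow$(1) applies to $(Y,S)$. All steps check out, and there is no circularity since Theorem~\ref{thm:mean-equi-unifon-conv} is established independently. What your approach buys is brevity and the complete avoidance of sequence extraction and density bookkeeping; what it costs is reliance on the full strength of Theorem~\ref{thm:mean-equi-unifon-conv} (Arzel\`a--Ascoli and unique ergodicity of orbit closures), whereas the paper's proof is elementary and self-contained modulo Lemma~\ref{lem:mean-equi-mean-L-stalbe}. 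One further difference worth noting: the paper's quantitative argument is what yields Remark~\ref{rem:mean-sensitive-point} (lifting mean sensitive points through factor maps), which does not fall out of your softer argument.
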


\begin{proof}
Assume the contrary that $(Y,S)$ is not mean equicontinuous.
Then there is $\delta>0$ and two sequences $\{y_k\},\{z_k\}$ of points in $Y$
 with $d(y_k,z_k)<\frac 1 k$ such that for each $k\geq 1$,
\[\limsup_{n\to\infty} \frac 1n\sum_{i=0}^{n-1}d(S^i(y_k),S^i(z_k))\geq 2\delta.\]
By the compactness of $Y$, we may assume that $\lim_{k\to\infty}y_k=\lim_{k\to\infty}z_k=y\in Y$.
For each $k\geq 1$,
\[\limsup\limits_{n\to\infty} \frac 1n\sum\limits_{i=0}^{n-1}d(S^i(y_k),S^i(y))+
\limsup\limits_{n\to\infty} \frac 1n\sum\limits_{i=0}^{n-1}d(S^i(z_k),S^i(y))\geq
\limsup\limits_{n\to\infty} \frac 1n\sum\limits_{i=0}^{n-1}d(S^i(y_k),S^i(z_k)). \]
Then either $\limsup\limits_{n\to\infty} \frac 1n\sum\limits_{i=0}^{n-1}d(S^i(y_k),S^i(y))\geq \delta$
or  $\limsup\limits_{n\to\infty} \frac 1n\sum\limits_{i=0}^{n-1}d(S^i(z_k),S^i(y))\geq \delta$.
Without loss of generality, assume that
\[\limsup\limits_{n\to\infty} \frac 1n\sum\limits_{i=0}^{n-1}d(S^i(y_k),S^i(y))\geq \delta\]
 holds for all $k\geq 1$.
Choose a sequence $\{x_k\}$ in $X$ with $\pi(x_k)=y_k$.
By the compactness of $X$, we can assume that $\lim_{k\to\infty} x_k=x$. Then $\pi(x)=y$.

Choose an $\eta>0$ such that $\eta<\delta/(1+\diam(Y))$.
By the continuity of $\pi$, there exists $\theta\in (0,\eta)$ such that if $d(u,v)<\theta$ then $d(\pi(u),\pi(v))<\eta$.
Since $(X,T)$ is mean equicontinuous,
there is $\ep>0$ such that
\[\limsup_{n\to\infty} \frac 1n\sum\limits_{i=0}^{n-1}d(T^iw,T^ix)<\theta^2\]
for all $w\in B(x,\ep)$.
Choose $x_j\in B(x,\ep)$.
Let $E=\{k\in\Z: d(T^kx_j,T^kx)\geq \theta\}$. One has
\begin{align*}
  \theta^2>\limsup_{n\to\infty} \frac{1}{n}\sum_{i=0}^{n-1}d(T^ix_j,T^ix)\geq
  \limsup_{n\to\infty} \frac{1}{n} (\theta\#([0,n-1]\cap E)=\theta\overline{D}(E),
    \end{align*}
and then $\overline{D}(E)<\theta$.
Let $F=\{k\in\Z: d(T^ky_j,T^ky)\geq \eta\}$.
By the choice of $\theta$, we have $F\subset E$. Then $\overline{D}(F)<\theta$ and
\begin{align*}
  \limsup_{n\to\infty} \frac{1}{n}\sum_{i=0}^{n-1}d(S^i(y_j),S^i(y))&\leq
  \limsup_{n\to\infty} \frac{1}{n} (\diam(Y)\#([0,n-1]\cap F)+\eta n)\\
&\leq \diam(Y)\overline{D}(F)+\eta\leq \diam(Y)\theta+\eta \\
&\leq (\diam(Y)+1)\eta<\delta,
   \end{align*}
which is a contradiction.
Thus $(Y,S)$ is mean equicontinuous.
\end{proof}

\section{Almost mean equicontinuity}
In this section, we study the localization of mean equicontinuity.
Let $(X,T)$ be a dynamical system. A point $x\in X$ is called \emph{mean equicontinuous}
if for every $\ep>0$, there is $\delta>0$ such that for every $y\in B(x,\delta)$,
\[\limsup_{n\to\infty}\frac 1n \sum_{i=0}^{n-1}d(T^ix,T^iy)<\ep.\]
By the compactness of $X$, $(X,T)$ is mean equicontinuous if and only if every point in $X$ is mean equicontinuous.
A transitive system $(X,T)$ is called \emph{almost mean equicontinuous}
if there is at least one mean equicontinuous point.
We show that for a transitive system $(X,T)$, the set of mean equicontinuous points is either empty or residual.
If in addition $(X,T)$ is almost mean equicontinuous, then every transitive point is mean equicontinuous.
While almost equicontinuous systems must have zero topological entropy,
we construct many almost mean equicontinuous systems which have positive topological entropy.

Similarly to Theorem~\ref{thm:mean-equi-unifon-conv},
we have the following characterization of mean equicontinuous points.

\begin{thm}\label{thm:mean-equi-point}
Let $(X,T)$ be a dynamical system and $x\in X$. Then the following conditions are equivalent:
\begin{enumerate}
  \item $x$ is a mean equicontinuous point in $(X,T)$;
  \item for each $f\in C(X\times X)$, the sequence $\{f_n\}_{n=1}^\infty$ is equicontinuous at $(x,x)$;
  \item for each $f\in C(X\times X)$, the sequence $\{f_n\}_{n=1}^\infty$ is convergent  at $(x,x)$;
 \item for each $f\in C(X\times X)$, $\bar{f}$ is continuous at $(x,x)$, where $\bar{f}(y)=\limsup\limits_{n\to\infty} f_n(y)$.
\end{enumerate}
\end{thm}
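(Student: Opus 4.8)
The plan is to follow the template of Theorem~\ref{thm:mean-equi-unifon-conv}, localizing every ``uniform'' statement to the single point $(x,x)$, and to establish the cycle $(1)\Rightarrow(2)\Rightarrow(3)\Rightarrow(4)\Rightarrow(1)$. The first thing I would record is a reduction: $x$ is a mean equicontinuous point of $(X,T)$ if and only if $(x,x)$ is a mean equicontinuous point of the product system $(X\times X,T\times T)$. Indeed, with the sum metric on $X\times X$ one has
\[\frac1n\sum_{i=0}^{n-1} d\big((T^iu,T^iv),(T^ix,T^ix)\big)=\frac1n\sum_{i=0}^{n-1}d(T^iu,T^ix)+\frac1n\sum_{i=0}^{n-1}d(T^iv,T^ix),\]
so mean equicontinuity at $x$ controls both summands once $u,v$ are close to $x$. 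This reduction lets me treat the functions $f\in C(X\times X)$ in (2)--(4) as continuous functions on the phase space of the product system, and in particular makes the metric $d\in C(X\times X)$ available as a test function.

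For $(1)\Rightarrow(2)$ I would transcribe the argument of $(1)\Rightarrow(2)$ in Theorem~\ref{thm:mean-equi-unifon-conv}, anchored at $(x,x)$. Fix $f$ and $\ep>0$; by uniform continuity of $f$ choose $\delta_1$ handling pairs within $\delta_1$, and by the localized form of Lemma~\ref{lem:mean-equi-mean-L-stalbe} applied at $(x,x)$ choose $\delta_2$ so that for $(u,v)\in B((x,x),\delta_2)$ the set of times $i$ with $d\big((T^iu,T^iv),(T^ix,T^ix)\big)\ge\eta$ has upper density below $\eta$, where $\eta$ is small relative to $\ep/(\Vert f\Vert+1)$. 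Splitting the average $f_n(u,v)-f_n(x,x)$ over this thin bad set (bounded by $2\Vert f\Vert$) and its complement (bounded by $\ep/2$) gives $|f_n(u,v)-f_n(x,x)|<\ep$ for all large $n$, and the finitely many remaining $n$ are absorbed by continuity of each $f_n$ at $(x,x)$; this is exactly equicontinuity of $\{f_n\}$ at $(x,x)$. The step $(3)\Rightarrow(4)$ is the soft part: once $\{f_n\}$ converges at $(x,x)$ the limit equals $\bar f(x,x)$, and combined with the equicontinuity already available along the cycle one reads off continuity of $\bar f$ at $(x,x)$ from the elementary inequality $|\bar f(u,v)-\bar f(x,x)|\le\limsup_{n\to\infty}|f_n(u,v)-f_n(x,x)|$.

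The implication $(4)\Rightarrow(1)$ is immediate and is the payoff of the reduction: apply (4) to the test function $f=d\in C(X\times X)$. Since $\bar d(x,x)=\limsup_{n\to\infty}\frac1n\sum_{i=0}^{n-1}d(T^ix,T^ix)=0$, continuity of $\bar d$ at $(x,x)$ says precisely that $\bar d(x,y)=\limsup_{n\to\infty}\frac1n\sum_{i=0}^{n-1}d(T^ix,T^iy)$ tends to $0$ as $y\to x$, which is mean equicontinuity at $x$.

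The main obstacle is $(2)\Rightarrow(3)$, the passage from equicontinuity to genuine convergence, exactly as in the global case where Arzela--Ascoli and unique ergodicity did the work. My plan is to pass to the orbit closure $Z=\overline{Orb((x,x),T\times T)}$ and argue that equicontinuity of $\{f_n\}$ at $(x,x)$ should propagate along the orbit and, by taking limits, to all of $Z$; extracting a uniformly convergent subsequence by Arzela--Ascoli, the limit is $T\times T$-invariant, hence constant on each orbit closure inside $Z$, so $Z$ is uniquely ergodic by Theorem~\ref{thm:unique-ergodic} (cf.\ Corollary~\ref{cor:mean-eq-erg}); unique ergodicity then upgrades the subsequential limit to full convergence of $f_n$ on $Z$, in particular at $(x,x)$. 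The delicate point I expect to fight with is the propagation of pointwise equicontinuity along the orbit, since $T\times T$ need not be open, so equicontinuity at $(x,x)$ only transfers along the image $(T\times T)^k\big(B((x,x),\delta)\big)$ rather than a full neighborhood of $(T^kx,T^kx)$; isolating the genuinely local information carried by equicontinuity at $(x,x)$ is where the argument must be carried out with care.
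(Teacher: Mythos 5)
The paper itself never writes this proof out (it is asserted to follow ``similarly to Theorem~\ref{thm:mean-equi-unifon-conv}''), so you are being judged against the honest localization of that argument, which is what you attempt. Your reduction of (1) to mean equicontinuity of $(x,x)$ in the product system, your implication $(1)\Rightarrow(2)$ (splitting the average over the thin bad set of times, absorbing the finitely many remaining $n$ by continuity of each $f_n$ at $(x,x)$), and your implication $(4)\Rightarrow(1)$ via the test function $f=d$ are all correct and are exactly the local analogues of the global proof. The problem is concentrated where you yourself flag it, but it is worse than ``delicate'': it is fatal to the route you propose. Since $(T\times T)^i(x,x)=(T^ix,T^ix)$ never leaves the diagonal, $f_n(x,x)=\frac1n\sum_{i=0}^{n-1}f(T^ix,T^ix)$, and as every $g\in C(X)$ has the form $z\mapsto f(z,z)$, condition (3) says precisely that the Birkhoff averages $\frac1n\sum_{i=0}^{n-1}g(T^ix)$ converge for every $g\in C(X)$, i.e.\ that $x$ is a generic point. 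Your plan for $(2)\Rightarrow(3)$ is to propagate equicontinuity to $Z=\overline{Orb((x,x),T\times T)}$ and deduce unique ergodicity of $Z$; but the orbit closure of a mean equicontinuous point need not be uniquely ergodic. Theorem~\ref{p-entropy} of this very paper produces $X=\overline{Orb(x,\sigma)}$ with $x$ mean equicontinuous and $X$ containing both the fixed point $0^\infty$ and a nontrivial minimal subshift, hence at least two ergodic measures. No amount of care in transferring equicontinuity along the orbit will produce a fact that is false, so $(2)\Rightarrow(3)$ must be extracted from the local hypothesis by an entirely different mechanism, and your proposal does not supply one.

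In fact condition (3) sits awkwardly in the equivalence in both directions, which explains why your cycle cannot close. On one side, if $x$ is an isolated point of $X$ (e.g.\ $x=2w$ in its orbit closure inside $\{0,1,2\}^{\N}$, with the symbol $2$ occurring only once and $w$ chosen with oscillating symbol frequencies), then (1), (2) and (4) hold vacuously while (3) forces genericity of $x$ and fails. On the other side, a generic point of the full shift $(\Sigma_2,\sigma)$ satisfies (3) but is certainly not a mean equicontinuous point, so (3) alone does not imply (4); this is why your step $(3)\Rightarrow(4)$ has to borrow ``the equicontinuity already available along the cycle,'' i.e.\ it secretly uses (2), which is not permitted in a cyclic proof of equivalence. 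The genuine content you can salvage is $(1)\Leftrightarrow(2)$, $(1)\Leftrightarrow(4)$, and $(1)\Rightarrow(3)$ \emph{for non-isolated points after further argument}; to recover a clean four-way equivalence one must strengthen (3) to something like convergence of $\{f_n\}$ on a neighborhood of $(x,x)$ together with continuity of the limit there, rather than convergence at the single diagonal point.
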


Let $\mathcal{E}$  denote the set of all mean equicontinuous points.
For every $\ep>0$, let
\[\mathcal{E}_{\ep}=\Bigl\{x\in X: \exists \delta>0,\forall y,z\in B(x,\delta),\limsup_{n\to\infty}\frac{1}{n}\sum_{i=0}^{n-1}d(T^iy,T^iz)<\ep\Bigr\}.\]

\begin{prop}\label{pro-weiss}
Let $(X,T)$ be a dynamical system and $\ep>0$.
Then $\mathcal{E}_{\ep}$ is open and inversely invariant, that is $T^{-1}\mathcal{E}_{\ep}\subset \mathcal{E}_{\ep}$.
Moreover, $\mathcal{E}=\bigcap_{m=1}^\infty \mathcal{E}_{\frac{1}{m}}$ is a $G_\delta$ subset of $X$.
\end{prop}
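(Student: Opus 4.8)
The plan is to verify the three assertions in turn, each by an elementary argument directly from the definition of $\mathcal{E}_{\ep}$.

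First I would show that $\mathcal{E}_{\ep}$ is open by a halving-of-radius argument. Suppose $x\in\mathcal{E}_{\ep}$ with witnessing $\delta>0$, so that $\limsup_{n\to\infty}\frac1n\sum_{i=0}^{n-1}d(T^iy,T^iz)<\ep$ for all $y,z\in B(x,\delta)$. I claim $B(x,\tfrac{\delta}{2})\subset\mathcal{E}_{\ep}$: for any $x'\in B(x,\tfrac{\delta}{2})$ the triangle inequality gives $B(x',\tfrac{\delta}{2})\subset B(x,\delta)$, so every pair $y,z\in B(x',\tfrac{\delta}{2})$ already lies in $B(x,\delta)$ and hence satisfies the required bound. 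Thus $\tfrac{\delta}{2}$ witnesses $x'\in\mathcal{E}_{\ep}$, and $\mathcal{E}_{\ep}$ is open.

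Next, to prove $T^{-1}\mathcal{E}_{\ep}\subset\mathcal{E}_{\ep}$, I would take $x\in X$ with $Tx\in\mathcal{E}_{\ep}$ and let $\delta>0$ be a witness for $Tx$. Since $T$ is uniformly continuous on the compact space $X$, there is $\delta'>0$ with $d(u,v)<\delta'\Rightarrow d(Tu,Tv)<\delta$. For $y,z\in B(x,\delta')$ we then have $Ty,Tz\in B(Tx,\delta)$, so $\limsup_{n\to\infty}\frac1n\sum_{i=0}^{n-1}d(T^{i+1}y,T^{i+1}z)<\ep$. The only point needing care is the index shift: since $\sum_{i=0}^{n-1}d(T^iy,T^iz)-\sum_{i=0}^{n-1}d(T^{i+1}y,T^{i+1}z)=d(y,z)-d(T^ny,T^nz)$ has absolute value at most $\diam(X)$, the two Cesàro averages differ by at most $\diam(X)/n\to 0$, so their $\limsup$s coincide. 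Hence $\limsup_{n\to\infty}\frac1n\sum_{i=0}^{n-1}d(T^iy,T^iz)<\ep$, and $\delta'$ witnesses $x\in\mathcal{E}_{\ep}$.

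Finally I would establish the identity $\mathcal{E}=\bigcap_{m=1}^\infty\mathcal{E}_{1/m}$ by a double inclusion. For $\bigcap_m\mathcal{E}_{1/m}\subset\mathcal{E}$: given $\ep>0$, choose $m$ with $1/m<\ep$; a witness $\delta$ for $x\in\mathcal{E}_{1/m}$ yields mean equicontinuity at $x$ by specializing to $z=x\in B(x,\delta)$, since then $\limsup_{n\to\infty}\frac1n\sum_{i=0}^{n-1}d(T^ix,T^iy)<1/m<\ep$ for all $y\in B(x,\delta)$. For $\mathcal{E}\subset\bigcap_m\mathcal{E}_{1/m}$: fix $m$ and apply mean equicontinuity at $x$ with tolerance $\tfrac{1}{2m}$ to get $\delta>0$; for $y,z\in B(x,\delta)$ the triangle inequality $d(T^iy,T^iz)\le d(T^iy,T^ix)+d(T^ix,T^iz)$ combined with subadditivity of $\limsup$ bounds the average by $\tfrac{1}{2m}+\tfrac{1}{2m}=\tfrac1m$, so $\delta$ witnesses $x\in\mathcal{E}_{1/m}$. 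Since each $\mathcal{E}_{1/m}$ is open by the first step, the intersection $\mathcal{E}$ is a $G_\delta$ set. No step here is deep; the single delicate point is the index-shift verification in the inverse-invariance argument, where one checks that a lone summand bounded by $\diam(X)$ is asymptotically negligible in the Cesàro mean so that the shifted and unshifted $\limsup$s agree.
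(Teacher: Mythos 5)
Your proof is correct and follows essentially the same route as the paper's: the same halving-of-radius argument for openness, the same use of continuity of $T$ for inverse invariance, and the same double inclusion via the triangle inequality for the identity $\mathcal{E}=\bigcap_{m}\mathcal{E}_{1/m}$. The one point where you go beyond the paper is the explicit verification that the index shift changes the Ces\`aro averages by at most $\diam(X)/n$, a step the paper leaves implicit; this is a welcome clarification rather than a different approach.
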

\begin{proof}
Let $x\in\mathcal{E}_\ep$.
Choose $\delta>0$ satisfying the condition from the definition of $\mathcal{E}_\ep$
for $x$. Fix $y\in B(x,\frac{\delta}{2})$.
If $z,w\in B(y,\frac{\delta}{2})$,
then $z,w\in B(x,\delta)$, so
\[\limsup_{n\to\infty}\frac{1}{n}\sum_{i=0}^{n-1}d(T^iz,T^iw)<\ep.\]
This shows that $B(x,\frac{\delta}{2})\subset \mathcal{E}_{\ep}$ and thus $\mathcal{E}_{\ep}$ is open.

Let $x\in X$ with $Tx\in\mathcal{E}_\ep$.
Choose $\delta>0$ satisfying the condition from the definition of $\mathcal{E}_\ep$ for $Tx$.
By the continuity of $T$, there exists $\eta>0$ such that
$d(Ty,Tz)<\delta$ for any $y,z\in B(x,\eta)$.
If $y,z\in B(x,\eta)$, then $Ty,Tz\in B(Tx,\delta)$. Thus
\[
\limsup_{n\to\infty}\frac{1}{n}\sum_{i=0}^{n-1}d(T^iy,T^iz)=
\limsup_{n\to\infty}\frac{1}{n}\sum_{i=0}^{n-1}d(T^i(Ty),T^i(Tz))<\ep,
\]
This implies $x\in \mathcal{E}_\ep$.

If $x\in X$ belongs to all $\mathcal{E}_{\frac{1}{m}}$, then clearly $x\in\mathcal{E}$.
Conversely, if $x\in \mathcal{E}$ and $m>0$, then there exists $\delta>0$ such that
\[\limsup_{n\to\infty}\frac{1}{n}\sum_{i=0}^{n-1}d(T^ix,T^iy)<\frac{1}{2m}\]
for all $y \in B(x,\delta)$.
If $y,z\in B(x,\delta)$,
\begin{align*}
  \limsup_{n\to\infty}\frac{1}{n}\sum_{i=0}^{n-1}d(T^iy,T^iz)&\leq
\limsup_{n\to\infty}\frac{1}{n}\sum_{i=0}^{n-1}(d(T^ix,T^iy)+d(T^ix,T^iz))\\
&\leq \limsup_{n\to\infty}\frac{1}{n}\sum_{i=0}^{n-1}d(T^ix,T^iy)+
\limsup_{n\to\infty}\frac{1}{n}\sum_{i=0}^{n-1}d(T^ix,T^iz))\\
&<\frac{1}{2m}+\frac{1}{2m}=\frac{1}{m}.
\end{align*}
Thus $x\in \mathcal{E}_{\frac{1}{m}}$. This ends the proof.
\end{proof}

\begin{prop}\label{prop:mean-equi-point}
Let $(X,T)$ be a transitive system.
\begin{enumerate}
\item The set of mean equicontinuous points is either empty or residual.
If in addition $(X,T)$ is almost mean equicontinuous,
then every transitive point is mean equicontinuous.
\item If $(X,T)$ is minimal and almost mean equicontinuous, then it is mean equicontinuous.
\end{enumerate}
\end{prop}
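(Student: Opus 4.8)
The plan is to deduce the entire statement from the two structural facts recorded in Proposition~\ref{pro-weiss}: each $\mathcal{E}_{\ep}$ is open and inversely invariant, i.e. $T^{-1}\mathcal{E}_{\ep}\subset\mathcal{E}_{\ep}$, and $\mathcal{E}=\bigcap_{m\geq 1}\mathcal{E}_{1/m}$ is $G_\delta$. The heart of the argument will be the following observation: \emph{if $\mathcal{E}_{\ep}\neq\emptyset$, then every transitive point of $(X,T)$ lies in $\mathcal{E}_{\ep}$.} To see this, let $x$ be a transitive point, so that $\omega(x,T)=X$ and hence the forward orbit $\{T^nx:n\geq 0\}$ is dense in $X$. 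Since $\mathcal{E}_{\ep}$ is a nonempty open set, there is some $n\geq 0$ with $T^nx\in\mathcal{E}_{\ep}$. Iterating the inclusion $T^{-1}\mathcal{E}_{\ep}\subset\mathcal{E}_{\ep}$ yields $T^{-k}\mathcal{E}_{\ep}\subset\mathcal{E}_{\ep}$ for every $k\geq 0$, so from $T^nx\in\mathcal{E}_{\ep}$ one pulls back to $x\in T^{-n}\mathcal{E}_{\ep}\subset\mathcal{E}_{\ep}$.

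Granting this, part (1) follows quickly. Suppose $\mathcal{E}\neq\emptyset$ and fix $m\geq 1$; since $\mathcal{E}\subset\mathcal{E}_{1/m}$ we have $\mathcal{E}_{1/m}\neq\emptyset$. By the observation, $\mathcal{E}_{1/m}$ contains $Trans(X,T)$, which is dense in $X$, so $\mathcal{E}_{1/m}$ is a dense open set. Consequently $\mathcal{E}=\bigcap_{m\geq 1}\mathcal{E}_{1/m}$ is a dense $G_\delta$ set, hence residual by the Baire category theorem; this establishes the dichotomy. The second assertion comes from the same observation: whenever $(X,T)$ is almost mean equicontinuous, that is $\mathcal{E}\neq\emptyset$, every transitive point lies in $\mathcal{E}_{1/m}$ for all $m$, and therefore in $\mathcal{E}$, so every transitive point is mean equicontinuous.

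For part (2), I would assume $(X,T)$ is minimal and almost mean equicontinuous. Minimality means every point is transitive, so by part (1) every point of $X$ is mean equicontinuous, i.e. $\mathcal{E}=X$. Since $(X,T)$ is mean equicontinuous precisely when every point of $X$ is mean equicontinuous (by the compactness of $X$), we conclude that $(X,T)$ is mean equicontinuous.

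The only genuinely delicate point is the observation in the first paragraph, and within it the use of the \emph{forward} orbit: one must invoke that $\omega(x,T)=X$ for a transitive point, not merely that its full orbit is dense, in order to guarantee that some forward iterate $T^nx$ enters the open set $\mathcal{E}_{\ep}$ and can then be pulled back. Once this is secured, inverse invariance does all the remaining work, exactly as in the classical treatment of almost equicontinuous systems; no fresh estimate on the Ces\`aro averages is required beyond what is already encoded in Proposition~\ref{pro-weiss}.
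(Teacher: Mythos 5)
Your proof is correct and follows essentially the same route as the paper: both rest on the facts from Proposition~\ref{pro-weiss} that each $\mathcal{E}_{\ep}$ is open and inversely invariant, pull a transitive point back into a nonempty $\mathcal{E}_{\ep}$ via a forward iterate, and then apply the Baire category theorem and the compactness characterization of mean equicontinuity. The only difference is organizational (you prove the transitive-point containment first and deduce density from it, whereas the paper states density first), which changes nothing of substance.
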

\begin{proof}
By the transitivity of $(X,T)$, every $\mathcal{E}_{\ep}$ is either empty or dense, since
$\mathcal{E}_{\ep}$ is open and inversely invariant.
Then $\mathcal{E}$ is either empty or residual by the Baire Category Theorem.

If $\mathcal{E}$ is residual, then every $\mathcal{E}_{\ep}$ is open and dense.
Let $x\in X$ be a transitive point and $\ep>0$.
Then there exists some $k\in\Z$ such that $T^kx\in \mathcal{E}_{\ep}$,
and since $\mathcal{E}_{\ep}$ is inversely invariant, $x\in \mathcal{E}_{\ep}$.
Thus $x\in \mathcal{E}$.
\end{proof}

\begin{rem} We have the following remarks.
\begin{enumerate}
\item It should be noticed that the set of mean equicontinuous points may not coincide with the set of transitive points.
For example, a weakly mixing strongly proximal system is mean equicontinuous,
but the set of non-transitive points is dense in the space.

\item Every almost equicontinuous system is almost mean equicontinuous.
There exists some almost equicontinuous systems which have more than
one fixed point (see~\cite{HY}) and are thus not uniquely ergodic. So those systems are
almost equicontinuous but not mean equicontinuous.

\item It is shown in~\cite{GW93} that factors of almost equicontinuous systems may not be almost equicontinuous.
In fact,  factors of almost equicontinuous systems may not even be almost mean equicontinuous.
Let $(Y,S)$ be a uniformly rigid weakly mixing minimal system.
By Corollary~\ref{cor:mean-eq-wm} $(Y,S)$ is not almost mean equicontinuous.
By \cite[Proposition~1.5]{GW93} there exists an extension $(X,T)$ of $(Y,S)$ which is  almost equicontinuous.
\end{enumerate}
\end{rem}

Let $\pi:X\to Y$ be a map.
The map $\pi$ is called \emph{open} if for every non-empty open subset $U$ of $X$, $\pi(U)$ is open in $Y$,
and \emph{semi-open} if $\pi(U)$ has non-empty interior in $Y$.
We say that $\pi$ is \emph{open at a point $x\in X$} for for every neighborhood $U$ of $x$,
$\pi(U)$ is a neighborhood of $\pi(x)$.

\begin{lem}\label{lem:mean-equi-point-open}
Let $\pi:(X,T)\to (Y,S)$ be a factor map between two dynamical systems..
Let $x\in X$ be a mean equicontinuous point and suppose that $\pi$ is open at $x$.
Then $y=\pi(x)$ is also a mean equicontinuous point.
\end{lem}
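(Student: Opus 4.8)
The plan is to localize the proof of Theorem~\ref{thm:mean-equi-factor}, with the openness of $\pi$ at $x$ playing the role that compactness and the free choice of preimages played in the global statement. Throughout I write $d$ for the metrics on both $X$ and $Y$, as in Theorem~\ref{thm:mean-equi-factor}. Fix $\ep>0$; I want to produce a $\delta'>0$ so that every $y'\in B(y,\delta')$ satisfies $\limsup_{n\to\infty}\frac1n\sum_{i=0}^{n-1}d(S^iy,S^iy')<\ep$. First I would choose $\eta>0$ with $(\diam(Y)+1)\eta<\ep$, and then, using the uniform continuity of $\pi$ on the compact space $X$, pick $\theta\in(0,\eta)$ such that $d(u,v)<\theta$ forces $d(\pi u,\pi v)<\eta$. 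Since $x$ is a mean equicontinuous point, there is $\alpha>0$ such that
\[\limsup_{n\to\infty}\frac1n\sum_{i=0}^{n-1}d(T^ix,T^iw)<\theta^2\qquad\text{for all }w\in B(x,\alpha).\]

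The crucial step is to transfer the neighborhood $B(x,\alpha)$ downstairs. Because $\pi$ is open at $x$, the image $\pi(B(x,\alpha))$ is a neighborhood of $y=\pi(x)$, so I can choose $\delta'>0$ with $B(y,\delta')\subset\pi(B(x,\alpha))$. Now given any $y'\in B(y,\delta')$, I pick a preimage $w\in B(x,\alpha)$ with $\pi(w)=y'$. Setting $E=\{k\in\Z:d(T^kx,T^kw)\geq\theta\}$, the estimate above gives $\theta\,\overline{D}(E)\leq\limsup_{n\to\infty}\frac1n\sum_{i=0}^{n-1}d(T^ix,T^iw)<\theta^2$, hence $\overline{D}(E)<\theta$. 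For the downstairs orbit, set $F=\{k\in\Z:d(S^ky,S^ky')\geq\eta\}$; since $\pi\circ T^k=S^k\circ\pi$, any $k\notin E$ satisfies $d(T^kx,T^kw)<\theta$ and therefore $d(S^ky,S^ky')=d(\pi T^kx,\pi T^kw)<\eta$, i.e.\ $k\notin F$. Thus $F\subset E$ and $\overline{D}(F)<\theta$, which yields
\[\limsup_{n\to\infty}\frac1n\sum_{i=0}^{n-1}d(S^iy,S^iy')\leq\diam(Y)\,\overline{D}(F)+\eta\leq\diam(Y)\,\theta+\eta\leq(\diam(Y)+1)\eta<\ep,\]
as required.

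I expect the only real subtlety to be this use of openness at $x$: it is exactly what guarantees that every $y'$ close to $y$ has a preimage $w$ close to $x$, so that the mean-equicontinuity estimate at $x$ can be applied to $w$. The remainder is the familiar ``small upper density plus small everywhere'' splitting already used in Lemma~\ref{lem:mean-equi-mean-L-stalbe} and Theorem~\ref{thm:mean-equi-factor}; in particular, passing from the $\theta^2$ bound to $\overline{D}(E)<\theta$ and then to the inclusion $F\subset E$ is the same mean-L-stable bookkeeping, and no compactness of $X$ beyond the uniform continuity of $\pi$ enters the argument.
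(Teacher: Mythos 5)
Your proof is correct, and it uses the same essential mechanism as the paper: openness of $\pi$ at $x$ supplies, for each $y'$ near $y$, a preimage $w$ near $x$, after which the mean-equicontinuity estimate at $x$ is transferred downstairs via the uniform continuity of $\pi$ and the standard ``small upper density plus small elsewhere'' splitting from Lemma~\ref{lem:mean-equi-mean-L-stalbe} and Theorem~\ref{thm:mean-equi-factor}. The only difference is packaging: the paper argues by contradiction with sequences $y_k\to y$ and lifts them to $x_k\to x$, whereas you give the direct, quantitative version producing an explicit $\delta'$; both are equally valid.
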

\begin{proof}
If $y$ is not a mean equicontinuous point,
then there exists $\delta>0$ and a sequence $y_k\to y$ such that for any $k$,
\[\limsup_{n\to\infty} \frac 1n \sum_{i=0}^{n-1}d(S^iy_k,S^iy)>\delta.\]
Then there exists $t>0$ such that for any $k\geq 1$,
the upper density of $\{n\in\Z: d(S^iy_k,S^i y)>t\}$ is greater than $t$.

By the openness of $\pi$ at $x$, there is a sequence $x_k\to x$ with $\pi(x_k)=y_k$.
Since $x$ is mean equicontinuous, for every $\ep>0$, for large enough $k$, one has
\[\limsup_{n\to\infty} \frac 1n \sum_{i=0}^{n-1}d(T^ix_k,T^ix)<\ep.\]
By the uniform continuity of $\pi$, there exists $s>0$ such that if $d(u,v)<s$, then $d(\pi(u),\pi(v))<t/2$.
For this $s$, there exists $k\geq 1$ such that
the upper density of $\{n\in\Z: d(T^ix_k,T^ix)>s\}$ is less than $t$,
then the upper density of $\{n\in\Z: d(S^iy_k,S^i y)>t\}$ is less than $t$.
This is a contradiction. Thus $y$ is a mean equicontinuous point.
\end{proof}

\begin{thm}\label{thm:almost-semi-open}
Let $\pi:(X,T)\to(Y,S)$ be a factor map between transitive systems.
Suppose that $(X,T)$ is almost mean equicontinuous and $\pi$ is semi-open.
Then $(Y,S)$ is also almost mean equicontinuous.
\end{thm}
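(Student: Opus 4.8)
The plan is to reduce the statement to Lemma~\ref{lem:mean-equi-point-open} by locating a single point $x\in X$ that is \emph{simultaneously} mean equicontinuous and a point at which $\pi$ is open. Once such an $x$ is found, Lemma~\ref{lem:mean-equi-point-open} immediately produces a mean equicontinuous point $\pi(x)$ in $Y$; since $(Y,S)$ is transitive by hypothesis, this shows $(Y,S)$ is almost mean equicontinuous, which is exactly the conclusion.

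To find $x$ I would intersect two residual subsets of $X$. On one side, since $(X,T)$ is transitive and almost mean equicontinuous, Proposition~\ref{prop:mean-equi-point} (together with the fact that $\mathcal{E}=\bigcap_m \mathcal{E}_{1/m}$ is $G_\delta$ and nonempty) guarantees that the set $\mathcal{E}$ of mean equicontinuous points is residual in $X$. On the other side, I would establish the auxiliary fact that the set $X_o=\{x\in X:\pi\text{ is open at }x\}$ is residual in $X$ whenever $\pi$ is semi-open. Granting both, $\mathcal{E}\cap X_o$ is a residual, hence nonempty, subset of the Baire space $X$, and any point in it works.

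The main work, and the step I expect to be the real obstacle, is the residuality of $X_o$. The approach is a Baire-category argument. First fix a countable base $\{U_n\}$ for the topology of $X$. Openness of $\pi$ at a point $x$ is equivalent to the condition that for every $n$ with $x\in U_n$ one has $\pi(x)\in\inte\pi(U_n)$, so the complement of $X_o$ is $\bigcup_n A_n$ with $A_n=U_n\cap\pi^{-1}(Y\setminus\inte\pi(U_n))$. It then suffices to show each $A_n$ is nowhere dense. Suppose not; then there is a nonempty open $W\subseteq U_n$ in which $A_n$ is dense. By semi-openness $G:=\inte\pi(W)$ is nonempty, and since $W\subseteq U_n$ we have $G\subseteq\inte\pi(U_n)$. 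The set $W\cap\pi^{-1}(G)$ is a nonempty open subset of $W$, so it meets the dense set $A_n$ at some point $x_1$; but $x_1\in A_n$ forces $\pi(x_1)\notin\inte\pi(U_n)$, while $x_1\in\pi^{-1}(G)$ forces $\pi(x_1)\in G\subseteq\inte\pi(U_n)$, a contradiction. Hence each $A_n$ is nowhere dense, $X\setminus X_o$ is meager, and $X_o$ is residual. Note that the semi-openness hypothesis enters the argument precisely here, where it upgrades ``$W$ nonempty open'' to ``$\inte\pi(W)$ nonempty''.

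Assembling the pieces, I would choose $x\in\mathcal{E}\cap X_o$: then $x$ is mean equicontinuous and $\pi$ is open at $x$, so Lemma~\ref{lem:mean-equi-point-open} yields that $y=\pi(x)$ is a mean equicontinuous point of $(Y,S)$, and the transitivity of $(Y,S)$ finishes the proof. The only routine checks remaining are that $X$ is a Baire space (it is, being compact metric) so that two residual sets meet, and that the characterization of openness at $x$ in terms of a base is correct, both of which are straightforward.
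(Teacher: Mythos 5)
Your proposal is correct and follows essentially the same route as the paper: both locate a point $x$ that is simultaneously mean equicontinuous and a point at which $\pi$ is open (each of these sets being residual in $X$), and then invoke Lemma~\ref{lem:mean-equi-point-open}. The only difference is that the paper cites \cite[Lemma~2.1]{G07} for the residuality of $\{x\in X:\pi\text{ is open at }x\}$ while you prove it directly with a correct Baire-category argument, and the paper selects $x$ among the transitive points rather than in $\mathcal{E}$, which is equivalent here since in an almost mean equicontinuous system every transitive point is mean equicontinuous.
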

\begin{proof}
Since $\pi$ is semi-open,  by \cite[Lemma~2.1]{G07}
the set $\{x\in X: \pi\text{ is open at }x\}$ is residual in $X$.
Pick a transitive point $x\in X$ such that $\pi$ is open at $x$.
Note that  $\pi(x)$ is also a transitive point in $Y$.
Since $(X,T)$ is almost mean equicontinuous, $x$ is a mean equicontinuous point.
By Lemma~\ref{lem:mean-equi-point-open}, $\pi(x)$ is a mean equicontinuous point,
which implies that $(Y,S)$ is  almost mean equicontinuous.
\end{proof}

Recall that an almost equicontinuous system is uniformly rigid and thus has zero topological entropy.
The following Theorem \ref{p-entropy} shows that an almost mean equicontinuous system
behaves quite differently.

To start with we need some preparation. Let $\Sigma_2=\{0,1\}^\N$
with the product topology. A metric inducing the topology is given by $d(x,y)=0$ if
$x=y$, and $d(x,y)=\frac{1}{i}$ if $x\not=y$ and $i=\min\{i:x_i\not= y_i\}$
when $x=x_1x_2\ldots$ and $y=y_1y_2\ldots$.
For $n\in \N$, we call $A\in \{0,1\}^n$ a {\it finite block with length $n$}
and denote $|A|=n$.
For two blocks $A=x_1\ldots x_n$ and $B=y_1\ldots y_m$ define $AB=x_1\ldots x_ny_1\ldots y_m$.
For a block $A$, let $[A]$ be the collection of $x\in\Sigma_2$ starting from $A$.
For $x\in \Sigma_2$ and $i<j$, $x_{[i,j]}$ stands for
the finite block $x_ix_{i+1}\ldots x_j$.

The \emph{shift map} $\sigma:\Sigma_2\to\Sigma_2$ is defined by
the condition that $\sigma(x)_n=x_{n+1}$ for $n\in\N$.
It is clear that $\sigma$ is a continuous surjection.
The dynamical system $(\Sigma_2,\sigma)$ is called the \emph{full shift}.
If $X$ is non-empty, closed, and $\sigma$-invariant (i.e. $\sigma(X)\subset X$),
then the dynamical system $(X, \sigma)$ is called a \emph{subshift}.

\begin{thm}\label{p-entropy}
In the full shift $(\Sigma_2,\sigma)$, every minimal subshift $(Y,\sigma)$ is contained
in an almost mean equicontinuous subshift $(X,\sigma)$.
\end{thm}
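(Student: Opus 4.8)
The plan is to produce a single point whose orbit closure does the job. It suffices to construct $x\in\Sigma_2$ with $Y\subseteq X:=\overline{Orb(x,\sigma)}$ and with $x$ a mean equicontinuous point of $(X,\sigma)$: then $(X,\sigma)$ is transitive (by construction $x$ is a transitive point of its own orbit closure), it contains $Y$, and it has a mean equicontinuous point, so by definition it is almost mean equicontinuous. For the companion statement about entropy one then simply takes $Y$ to be a minimal subshift of positive topological entropy, since $X\supseteq Y$ forces the entropy of $X$ to be at least that of $Y$.

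First I would reduce mean equicontinuity of $x$ to a purely combinatorial condition on the \emph{difference set} $D(x,z)=\{j\in\N\colon x_j\neq z_j\}$. In the shift metric one has $d(\sigma^ix,\sigma^iz)\leq \frac1M$ unless $x$ and $z$ differ at some coordinate in $[i+1,i+M]$; hence the number of $i\in[0,n)$ with $d(\sigma^ix,\sigma^iz)>\frac1M$ is at most $M\cdot\#\bigl(D(x,z)\cap[0,n+M)\bigr)$, so that
\[
\limsup_{n\to\infty}\frac1n\sum_{i=0}^{n-1}d(\sigma^ix,\sigma^iz)\leq \frac1M+M\,\overline{D}\bigl(D(x,z)\bigr).
\]
Consequently, given $\ep>0$, after fixing $M$ with $\frac1M<\frac\ep2$ it suffices to control $\overline{D}(D(x,z))$: the point $x$ is mean equicontinuous provided that for every $\rho>0$ there is an $L$ such that every $z\in X$ with $z_{[1,L]}=x_{[1,L]}$ satisfies $\overline{D}(D(x,z))<\rho$.

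The construction then interleaves long ``dip'' blocks carrying the minimal system with a rigid scaffold whose role is to force alignment of nearby points. Fix $y\in Y$ and a finite block $u$ that occurs in no point of $Y$ (such a block exists because $Y\subsetneq\Sigma_2$). I would write $x$ as a concatenation in which the dips are the words $y_{[1,\ell_k]}$ with $\ell_k\to\infty$, placed along a hierarchically organized, recognizable scaffold marked by copies of $u$, the complementary scaffold positions carrying a fixed pattern. The lengths are chosen so that the total length of the dips meeting $[0,n)$ is $o(n)$, i.e. the set $S$ of dip positions has $\overline{D}(S)=0$; at the same time $\ell_k\to\infty$ makes $BD^*(S)$ positive, which is precisely the feature responsible, later, for these systems failing to be almost Banach mean equicontinuous. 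Since $x$ contains $y_{[1,\ell_k]}$ for every $k$ with $\ell_k\to\infty$, the orbit closure contains $y$, whence $Y=\overline{Orb(y,\sigma)}\subseteq X$; and because $u$ occurs in no point of $Y$ we have $x\notin Y$, so in fact $X\supsetneq Y$.

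The heart of the argument, and the step I expect to be the main obstacle, is verifying the density condition of the second paragraph for this $x$. The point is that the marked, hierarchically organized scaffold is \emph{recognizable}: agreeing with $x$ on a long prefix forces any $z\in X$ (a limit of shifts $\sigma^{n_j}x$) to align its scaffold with that of $x$, the matching length pushing $n_j$ into a fixed residue class modulo an arbitrarily large scaffold period, a Toeplitz-type mechanism. Once the scaffolds are aligned, $z$ and $x$ can differ only inside the dips, whose contents are free; hence $D(x,z)\subseteq S$ up to a negligible boundary, and therefore $\overline{D}(D(x,z))\leq\overline{D}(S)=0<\rho$. The delicate part is to design the scaffold and the marker placement so that this recognizability genuinely propagates to all scales while the dips remain long enough to force $Y\subseteq X$ (and to keep $BD^*(S)$ positive) yet globally sparse enough to keep $\overline{D}(S)=0$; this is exactly the tension between large Banach density and small upper density emphasized in the introduction.
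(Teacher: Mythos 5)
Your reduction in the second paragraph (bounding the Ces\`aro averages by $\tfrac1M+M\,\overline{D}(D(x,z))$) is sound, and the overall plan --- sparse ``dips'' carrying $Y$ with $\overline{D}=0$ but $BD^*>0$ --- is the right shape. But the proof has a genuine gap exactly where you flag it: the recognizable scaffold is never constructed, and the alignment claim (``agreeing with $x$ on a long prefix forces any $z\in X$ to align its scaffold with that of $x$'') is the entire content of the theorem in your approach and is only asserted. It is also not quite the right statement even granting a good construction: a prefix of length $L$ can at best force alignment up to some finite scale $k(L)$, so you would get $D(x,z)\subseteq S\cup S_{>k(L)}$ where $S_{>k(L)}$ is the set of scaffold positions not yet determined at scale $k(L)$, and you would additionally have to arrange and prove that the density of $S_{>k}$ tends to $0$ --- a further Toeplitz-type condition your sketch does not address. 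You also need to rule out spurious occurrences of the marker pattern straddling dip boundaries; ``$u$ does not occur in $Y$'' controls occurrences strictly inside a dip but not these. None of this is impossible, but as written the heart of the argument is missing.

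The paper's proof shows that all of this machinery can be avoided. It takes the scaffold to be identically $0$: set $B_n=y_1\dotsm y_n$, $A_{n+1}=A_n0^{k_n}B_n0^{k_n}A_n$ with $k_n$ growing fast, and $x=\lim_nA_n0^\infty$. Instead of proving that a nearby $z$ agrees with $x$ off the dips (which is what forces you into recognizability), one compares both $z$ and $x$ to the fixed point $0^\infty$: two combinatorial claims show that any $z\in[A_n]\cap X$ has at most $\bigl(\tfrac{i}{|A_n|+2k_n+n}+1\bigr)(|A_n|+n)$ ones in its first $i$ coordinates, hence $\limsup_N\frac1N\sum_{i<N}d(T^iz,0^\infty)<\ep/2$, and the triangle inequality through $0^\infty$ gives $\limsup_N\frac1N\sum_{i<N}d(T^iz,T^ix)<\ep$. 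Since only a count of ones is needed, no alignment of occurrences is ever required. If you want to salvage your route, either carry out the Toeplitz construction and the recognizability argument in full, or switch to the paper's device of comparing to a fixed point rather than to $x$ itself.
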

\begin{proof}

If $Y=\{0^\infty\}$, the result is obvious. Now assume that $Y\neq \{0^\infty\}$.
Choose a point $y=y_1y_2\ldots\in Y$ starting with $1$.
For every $n\geq 1$, set $B_n=y_1\ldots y_n$.

Set $A_1=11, A_2=A_10^{k_1}B_10^{k_1}A_1$ and
\[A_{n+1}=A_n0^{k_n}B_n0^{k_n}A_n\]
for $n\geq 2$, where $k_{n}\gg3|A_n|$.
We will require the sequence $\{k_n\}$ to satisfy  some proper properties later.

Let $x=\lim_{n\to\infty}A_n0^\infty$ and $X=\overline{Orb(x,\sigma)}$.
It is clear that $(Y,\sigma)$ is a subsystem of $(X,\sigma)$.
We are going to show that $(X,\sigma)$ is almost mean equicontinuous.

For a finite block $A$ over $\{0,1\}$,
we denote by $\mathfrak{o}(A)$ the function which counts the number of ones in $A$.
Since $y$ is minimal, by Lemma~\ref{lem:rec-ip} the word $1$ appears in $y$ syndetically.
Then there is $N\in\N$ such that $A_N$ does not appear in $y$.
For any $n\geq N$, we can express $x$ as
\[x=\underline{A_n0^{k_n}B_n0^{k_n}}\ \underline{A_n0^{k_{n+1}}B_{n+1}0^{k_{n+1}}}\dotsb
\underline{A_n0^{k_m}B_{m}0^{k_m}}\ \underline{A_n0^{k_q}B_{q}0^{k_q}}\dotsb.\]

\begin{claim} \label{claim-1}
For any $m\geq n$ and $1\leq i\leq |A_n|+2k_m+m$, we have
\[ \mathfrak{o}(A_n0^{k_m}B_{m}0^{k_m}{}_{[1,i]})\leq \max\Bigl\{1, \frac{i}{|A_n|+2k_n+n}\Bigr\}\cdot(|A_n|+n).\]
\end{claim}
\begin{proof}
It is clear that we only need to prove the case $m>n$.
The result is obvious for $i\in[1,|A_n|+k_m]$.
If we require the sequence $\{k_n\}$ to satisfy
\[\frac{k_q}{|A_p|+2k_p+p}>\frac{q}{p}, \qquad \forall 1\leq p<q,\]
then $k_m>2k_n$ and $m<\frac{k_m \cdot n}{|A_n|+2k_n+n}.$
So for $i>|A_n|+k_m$, we have
\begin{align*}
  \mathfrak{o}(A_n0^{k_m}B_{m}0^{k_m}{}_{[1,i]})& \leq |A_n|+m  \leq |A_n|+\frac{k_m \cdot n}{|A_n|+2k_n+n}\\
  &\le \frac{(|A_n|+k_m)(|A_n|+n)}{ |A_n|+2k_n+n}\\
  &\leq \frac{i}{|A_n|+2k_n+n}(|A_n|+n). \qedhere
\end{align*}
\end{proof}

\begin{claim}\label{claim-2}
If $j-i>|A_n|$ and $x_{[i,i+|A_n|-1]}=A_n$, then we have
\[\mathfrak{o}(x_{[i,j-1]})\leq \left(\frac{j-i}{|A_n|+2k_n+n}+1\right)(|A_n|+n). \]
\end{claim}
\begin{proof}
Since $A_N$ does not appear in $y$ and $x_{[i,i+|A_n|-1]}$ is equal  to $A_n$,
$x_{[i,j-1]}$ can be expressed as
\[x_{[i,j-1]}=\underline{A_n0^{k_{i_1}}B_{i_1}0^{k_{i_1}}}\ \underline{A_n0^{k_{i_2}}B_{i_2}0^{k_{i_2}}}\dotsb
\underline{A_n0^{k_{i_m}}B_{i_m}0^{k_{i_m}}}\ \underline{A_n0^{k_{q}}B_{q}0^{k_{q}}[1,p]}\]
for $i_l\geq n$, $l=1,\ldots, m$  and for some $p<|A_n|+2k_{q}+q$.

Then by Claim~\ref{claim-1} we have
\[\mathfrak{o}(A_n0^{k_{i_l}}B_{i_l}0^{k_{i_l}})\le \frac{|A_n0^{k_{i_l}}B_{i_l}0^{k_{i_l}}|}{|A_n|+2k_n+n}(|A_n|+n)\]
for $l=1,\ldots, m$ and
\[\mathfrak{o}(A_n0^{k_{q}}B_{q}0^{k_{q}}{}_{[1,p]})\le \frac{p}{|A_n|+2k_n+n}(|A_n|+n)+|A_n|+n.\]

Thus
\begin{align*}
  \mathfrak{o}(x_{[i,j-1]})&\leq  \mathfrak{o}(A_n0^{k_{i_1}}B_{i_1}0^{k_{i_1}})
  +\dotsb+\mathfrak{o}(A_n0^{k_{i_m}}B_{i_m}0^{k_{i_m}})+
  \mathfrak{o}(A_n0^{k_q}B_{q}0^{k_q}{}_{[1,p]}) \\
  & \leq \left(\frac{j-i}{|A_n|+2k_n+n}+1\right)(|A_n|+n).  \qedhere
\end{align*}
\end{proof}

Fix a positive number $\ep$. There is $K\in\N$ such that for any $a,b\in \Sigma$ if $a[0,K-1]=b[0,K-1]$ then $d(a,b)<\ep/5$.
If we require the sequence $\{k_n\}$ to satisfy
\[\frac{|A_n|+n}{|A_n|+2k_n+n} \searrow 0  \text{ as } n\to\infty,\]
then we can choose a large enough integer $n$ such that
\begin{equation}\label{i-1}
\frac{2K(|A_n|+n)}{|A_n|+2k_n+n}<\ep/4.
\end{equation}

Fix a point $z\in [A_n]\cap X$. If there exists some $k\in\Z$ such that $T^kz=0^\infty$, then it is clear that
\[\limsup_{N\to\infty} \frac{1}{N}\sum_{i=0}^{N-1}d(T^iz,0^\infty)=0.\]

Now we assume that $T^kz\neq0^\infty$ for all $k\in\Z$.
For any $i\geq 1$, there is $m_i\in\N$ such that $z_{[0,i]}=x_{[m_i,m_i+i]}$.
Since $z$ starts with $A_n$, by Claim~\ref{claim-2} we have
\begin{equation}\label{i-2}
\mathfrak{o}(z_{[0,i-1]})\leq \left(\frac{i}{|A_n|+2k_n+n}+1\right)(|A_n|+n)\end{equation}
for all $i>|A_n|$.
By the choice of $K$, we have
\begin{align*}
\limsup_{N\to\infty} \frac{1}{N}\sum_{i=0}^{N-1}d(T^iz,0^\infty)&\leq
\frac{\ep}{5}\cdot\limsup_{N\to\infty} \frac{1}{N}{\#\{0\leq i\leq N-1\colon (T^iz)_{[0,K-1]}=0^K\}}\\
&\qquad +\limsup_{N\to\infty} \frac{1}{N}{\#\{0\leq i\leq N-1\colon (T^iz)_{[0,K-1]}\neq 0^K\}}\\
&< \ep/4 + \limsup_{N\to\infty} \frac{1}{N}{2K\cdot\mathfrak{o}(z_{[0,N-1]})} \le \ep/2,
\end{align*}
by (\ref{i-1}) and (\ref{i-2}). This implies that for any $z\in [A_n]\cap X$
\[\limsup_{N\to\infty} \frac{1}{N}\sum_{i=0}^{N-1}d(T^iz,T^ix)<\ep,\]
and thus $x$ is a mean equicontinuous point.
\end{proof}

Since it is well-known that there are many minimal subshifts of $(\Sigma_2,\sigma)$
with positive topological entropy,  an immediate corollary of Theorem~\ref{p-entropy} is the following result.
\begin{cor}\label{cor:almost-mean-equi-entropy}
There exist many almost mean equicontinuous systems which have positive topological entropy.
\end{cor}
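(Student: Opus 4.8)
The plan is to realize $(Y,\sigma)$ inside the orbit closure of a single point $x\in\Sigma_2$ whose density of ones is driven to zero by inserting enormously long blocks of zeros; this should simultaneously force $Y\subset X$ and make $x$ itself a mean equicontinuous point. First I would dispose of the trivial case $Y=\{0^\infty\}$ by taking $X=Y$. Otherwise, since $Y$ is minimal and distinct from $\{0^\infty\}$, it contains a point $y$ starting with $1$; write $B_n=y_{[1,n]}$ for its prefixes. I would then build finite blocks recursively, $A_1=11$ and $A_{n+1}=A_n0^{k_n}B_n0^{k_n}A_n$, with the zero-gap lengths $k_n$ chosen far larger than $|A_n|$. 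Since every $A_n$ is a prefix of $A_{n+1}$, the points $A_n0^\infty$ converge to a well-defined $x$, and I set $X=\overline{Orb(x,\sigma)}$.

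To see $Y\subset X$, observe that $B_n=y_{[1,n]}$ occurs in $x$ (inside $A_{n+1}$), so there are shifts $\sigma^{j_n}(x)$ agreeing with $y$ on their first $n$ symbols; hence $\sigma^{j_n}(x)\to y$, so $y\in X$, and minimality gives $Y=\overline{Orb(y,\sigma)}\subset X$. For the mean equicontinuity of $x$, the guiding idea is that the huge zero-gaps make the density of ones in $x$, and in every point of $X$ starting with a long $A_n$, tend to zero; such a point therefore sits, in the Ces\`aro-average sense, at iterates that begin with a long run of zeros and so lie within $\ep/5$ of $0^\infty$. Because $d(\sigma^i z,0^\infty)$ is small except when $\sigma^i z$ displays a $1$ among its first $K$ coordinates, and the number of such $i\le N-1$ is at most $2K$ times the number of ones in $z_{[0,N-1]}$, both $x$ and any nearby $z$ are mean-close to $0^\infty$, hence mean-close to one another; this is exactly what makes $x$ a mean equicontinuous point.

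The technical heart, and the step I expect to be the main obstacle, is the combinatorial estimate controlling the number of ones in an arbitrary window of $x$ that begins with $A_n$. Here I would use that $A_N$ fails to occur in $y$ for large $N$ --- a consequence of the syndetic occurrence of the symbol $1$ in the minimal point $y$ (Lemma~\ref{lem:rec-ip}), which forbids long zero-runs in $y$ --- so that inside $x$ the only copies of $A_n$ sit at the positions dictated by the recursion. This lets me decompose such a window into consecutive patterns $A_n0^{k_m}B_m0^{k_m}$ with $m\ge n$, bound the ones in each piece by induction on the recursive structure, and sum to obtain that the number of ones in $z_{[0,i-1]}$ is at most $\bigl(\frac{i}{|A_n|+2k_n+n}+1\bigr)(|A_n|+n)$; thus the ones occur with density essentially $\frac{|A_n|+n}{|A_n|+2k_n+n}$.

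What remains is to pin down the growth of $k_n$ so that all estimates cooperate: the gaps must be large enough that $\frac{|A_n|+n}{|A_n|+2k_n+n}\searrow 0$, and must satisfy a cross-scale inequality of the form $\frac{k_q}{|A_p|+2k_p+p}>\frac{q}{p}$ for $p<q$, which is what makes the inductive summation in the window estimate close up. Granting this, I would fix $\ep>0$, choose $K$ so that agreement on the first $K$ coordinates forces distance below $\ep/5$, and then pick $n$ so large that $\frac{2K(|A_n|+n)}{|A_n|+2k_n+n}<\ep/4$; combining this with the density bound yields $\limsup_{N\to\infty}\frac1N\sum_{i=0}^{N-1}d(\sigma^i z,\sigma^i x)<\ep$ for every $z\in[A_n]\cap X$, which exhibits $x$ as the required mean equicontinuous point and completes the proof.
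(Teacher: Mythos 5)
Your proposal is correct and takes essentially the same route as the paper: it reconstructs the proof of Theorem~\ref{p-entropy} (embedding an arbitrary minimal subshift $Y$ into the almost mean equicontinuous orbit closure $X=\overline{Orb(x,\sigma)}$ via the blocks $A_{n+1}=A_n0^{k_n}B_n0^{k_n}A_n$, the two counting claims, and the growth conditions on $k_n$), which is exactly how the paper derives this corollary. The only step you leave implicit is the final instantiation --- take $Y$ to be one of the well-known minimal subshifts of positive entropy and use that a subsystem cannot have larger entropy, so $h(X)\geq h(Y)>0$ --- which is immediate from your setup.
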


\section{Mean sensitivity}
In this section we study the opposite side of mean equicontinuity---mean sensitivity.
It turns out that if a dynamical system $(X,T)$ is minimal then
$(X,T)$ is either mean equicontinuous or mean sensitive,
and if $(X,T)$ is transitive then $(X,T)$ is either almost mean equicontinuous or mean sensitive.

A dynamical system $(X,T)$ is \emph{mean sensitive}
if there exists $\delta>0$ such that for every $x\in X$ and every $\ep>0$
there is $y\in B(x,\ep)$ satisfying
\[\limsup_{n\to\infty}\frac{1}{n}\sum_{i=0}^{n-1}d(T^ix,T^iy)>\delta.\]

\begin{prop}\label{prop:mean-sensitive-equi}
Let $(X,T)$ be a dynamical system. Then the following conditions are equivalent:
\begin{enumerate}
  \item\label{enum:mean-sensitive} $(X,T)$ is mean sensitive;
  \item\label{enum:mean-sensitive-open} there exists $\delta>0$ such that
for every non-empty open subset $U$ of $X$
there are $x,y\in U$ satisfying
\[\limsup_{n\to\infty}\frac{1}{n}\sum_{i=0}^{n-1}d(T^ix,T^iy)>\delta;\]
\item\label{enum:x-cell} there exists $\eta>0$ such that
for every $x\in X$, the set
\[D_\eta(x)=\bigl\{y\in X\colon\limsup_{n\to\infty}\frac{1}{n}\sum_{i=0}^{n-1}d(T^ix,T^iy)\geq \eta\bigr\}\]
is a dense $G_\delta$ subset of $X$;
 \item\label{enum:XxX} there exists $\eta>0$ such that
\[D_\eta=\bigl\{(x,y)\in X\times X\colon\limsup_{n\to\infty}\frac{1}{n}\sum_{i=0}^{n-1}d(T^ix,T^iy)\geq \eta\bigr\}\]
is a dense $G_\delta$ subset of $X\times X$.
\end{enumerate}
\end{prop}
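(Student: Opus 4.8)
The plan is to prove the four characterizations of mean sensitivity equivalent by establishing a cycle of implications. The natural order is $(\ref{enum:mean-sensitive})\Rightarrow(\ref{enum:mean-sensitive-open})\Rightarrow(\ref{enum:XxX})\Rightarrow(\ref{enum:x-cell})\Rightarrow(\ref{enum:mean-sensitive})$, although some arrows will be nearly immediate while one requires a Baire-category argument. Throughout I will write $D(x,y)=\limsup_{n\to\infty}\frac{1}{n}\sum_{i=0}^{n-1}d(T^ix,T^iy)$ to keep the displays manageable.

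First I would handle $(\ref{enum:mean-sensitive})\Rightarrow(\ref{enum:mean-sensitive-open})$, which is essentially a restatement: given the mean sensitivity constant $\delta$, for any non-empty open $U$ pick $x\in U$ and a small enough $\ep$ so that $B(x,\ep)\subset U$; the witness $y\in B(x,\ep)$ from mean sensitivity lies in $U$ and satisfies $D(x,y)>\delta$. The implication $(\ref{enum:XxX})\Rightarrow(\ref{enum:x-cell})$ is also soft: from $D_\eta$ being a dense $G_\delta$ in $X\times X$, I would use the fact that $D_\eta(x)$ is the $x$-section of $D_\eta$, and a section of a dense $G_\delta$ in a product of complete metric spaces is a dense $G_\delta$ in the fiber for a residual (hence in particular for every, after adjusting $\eta$ slightly downward) set of $x$. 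More directly, I would argue that $D_{\eta}(x)$ is always a $G_\delta$ set because $y\mapsto D(x,y)$ is a $\limsup$ of the continuous functions $(d_n)(x,\cdot)$, so $\{y:D(x,y)\geq\eta\}=\bigcap_{m}\{y: \sup_{n\geq m} d_n(x,y)> \eta-\tfrac1k\}$ type intersections yield a $G_\delta$; density is the substantive point and is what one extracts from the product statement.

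The implication $(\ref{enum:x-cell})\Rightarrow(\ref{enum:mean-sensitive})$ is immediate: if each $D_\eta(x)$ is dense, then every neighborhood $B(x,\ep)$ meets $D_\eta(x)$, producing a point $y$ with $D(x,y)\geq\eta$, and taking $\delta$ slightly below $\eta$ gives the strict inequality required by the definition of mean sensitivity (or one simply allows the non-strict version, which is equivalent up to shrinking the constant). The genuinely substantive step, and the one I expect to be the main obstacle, is $(\ref{enum:mean-sensitive-open})\Rightarrow(\ref{enum:XxX})$. Here the strategy is a standard but careful Baire-category construction on $X\times X$: I would show that the set $D_\eta$ (for a suitable $\eta$ proportional to $\delta$) is a dense $G_\delta$.

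For the $G_\delta$ part, I would write $D_\eta=\bigcap_{k\geq1}\bigcap_{m\geq1} U_{k,m}$ where $U_{k,m}=\{(x,y): \exists n\geq m,\ d_n(x,y)>\eta-\tfrac1k\}$, each $U_{k,m}$ being open by continuity of $d_n=\frac1n\sum_{i=0}^{n-1}d\circ(T^i\times T^i)$ on $X\times X$; the intersection realizes the condition $\limsup_n d_n(x,y)\geq\eta$. For density, given any non-empty open box $V\times W\subset X\times X$ I must find $(x,y)\in V\times W$ with $D(x,y)\geq\eta$. The idea is to exploit condition $(\ref{enum:mean-sensitive-open})$ inside the open set $U=V\cap W$ when it is non-empty, or to first use a transitivity-free spreading argument; since no transitivity is assumed here, I would instead apply $(\ref{enum:mean-sensitive-open})$ directly to a suitably chosen open set and then argue that one can perturb the resulting pair into $V\times W$. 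The delicate point is that $(\ref{enum:mean-sensitive-open})$ gives a pair in a \emph{single} open set, whereas $(\ref{enum:XxX})$ demands a pair with prescribed \emph{independent} coordinates; bridging this gap is where the real work lies, and I expect to resolve it by showing that each $U_{k,m}$ is not merely open but dense in $X\times X$, using $(\ref{enum:mean-sensitive-open})$ together with the openness of the finite-time averages to propagate the sensitivity witnesses across the product, after which the Baire Category Theorem delivers the dense $G_\delta$ conclusion.
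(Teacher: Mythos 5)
Your cycle leaves its hardest link unproved. For $(\ref{enum:mean-sensitive-open})\Rightarrow(\ref{enum:XxX})$ you correctly identify the delicate point --- condition $(\ref{enum:mean-sensitive-open})$ produces a pair inside a \emph{single} open set, while density of $D_\eta$ requires a pair in an arbitrary box $V\times W$ --- but your proposed resolution (``propagate the sensitivity witnesses across the product'' using openness of the finite-time averages $d_n$) does not work: perturbing a witness pair changes the finite-time averages continuously but gives no control over $\limsup_n d_n$, which is exactly the quantity that must stay large. The idea you are missing is the triangle inequality for the averaged metric, $D(y,z)\le D(x,y)+D(x,z)$ where $D(x,y)=\limsup_n\frac1n\sum_{i=0}^{n-1}d(T^ix,T^iy)$. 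The paper uses it to prove $(\ref{enum:mean-sensitive})\Rightarrow(\ref{enum:XxX})$ directly by contradiction: if $D_\eta$ (with $\eta=\delta/2$) misses some box $U\times V$, fix $x\in U$, $z\in V$ and a ball $B(x,\ep)\subset U$; then every $y\in B(x,\ep)$ satisfies $D(x,y)\le D(x,z)+D(y,z)<\eta+\eta=\delta$, contradicting mean sensitivity at $x$. This is the step your outline needs and does not contain.

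The same missing idea undermines your $(\ref{enum:XxX})\Rightarrow(\ref{enum:x-cell})$ step. The Kuratowski--Ulam section argument only yields that $D_\eta(x)$ is comeager for a \emph{residual} set of $x$, and ``adjusting $\eta$ slightly downward'' does not upgrade this to every $x$: the exceptional meager set of fibers is simply not controlled by that theorem. The correct argument is again the triangle inequality: given any $x$ and any non-empty open $U$, density of $D_{2\eta}$ gives $y,z\in U$ with $2\eta\le D(y,z)\le D(x,y)+D(x,z)$, forcing $y$ or $z$ into $D_\eta(x)$, so $D_\eta(x)$ is dense for \emph{every} $x$ (with $\eta$ half the constant from $(\ref{enum:XxX})$). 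Your $G_\delta$ decompositions and the soft implications $(\ref{enum:mean-sensitive})\Rightarrow(\ref{enum:mean-sensitive-open})$, $(\ref{enum:x-cell})\Rightarrow(\ref{enum:mean-sensitive})$ are fine, but without the averaged triangle inequality the two substantive arrows are open.
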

\begin{proof}
\eqref{enum:mean-sensitive}$\Rightarrow$\eqref{enum:XxX}
Since $(X,T)$ is mean sensitive, there exists $\delta>0$ such that for every $x\in X$ and every $\ep>0$
there is $y\in B(x,\ep)$ satisfying
\[\limsup_{n\to\infty}\frac{1}{n}\sum_{i=0}^{n-1}d(T^ix,T^iy)>\delta.\]
Let $\eta=\frac{\delta}{2}$. It is not hard to check that
\[D_\eta=\bigcap_{m=1}^{\infty}\bigcap_{\ell=1}^{\infty}
\Bigl(\bigcup_{n\ge \ell} \bigl\{(x,y)\in X\times X:
\frac{1}{n}\sum_{i=0}^{n-1}d(T^ix,T^iy)>\eta-\tfrac{1}{m}\bigr\}\Bigr).\]
Then $D_\eta$ is a $G_\delta$ subset of $X\times X$.
If $D_\eta$ is not dense in $X\times X$, then there exist two non-empty open subsets $U$ and $V$ of $X$
such that
\[U\times V\subset \bigl\{(x,y)\in X\times X: \limsup_{n\to\infty}\frac{1}{n}\sum_{i=0}^{n-1}d(T^ix,T^iy)<\eta\bigr\}.\]
Pick $x\in U$, $z\in V$  and $\ep>0$ such that $B(x,\ep)\subset U$.
Then for every $y\in B(x,\ep)$, one has
\begin{align*}
\limsup_{n\to\infty}\frac{1}{n}\sum_{i=0}^{n-1}d(T^ix,T^iy)
&\leq\limsup_{n\to\infty}\frac{1}{n}\sum_{i=0}^{n-1}(d(T^ix,T^iz)+d(T^iy,T^iz))\\
&\leq\limsup_{n\to\infty}\frac{1}{n}\sum_{i=0}^{n-1}d(T^ix,T^iz)+
\limsup_{n\to\infty}\frac{1}{n}\sum_{i=0}^{n-1}d(T^iy,T^iz)\\
&<\eta+\eta=\delta,
\end{align*}
which is a contradiction. Thus $D_\eta$ is a dense $G_\delta$ subset of $X\times X$.

\eqref{enum:XxX}$\Rightarrow$\eqref{enum:x-cell}
Assume that there exists $\eta>0$ such that $D_{2\eta}$ is a dense $G_\delta$ subset of $X\times X$.
Note that $D_\eta(x)$ is a $G_\delta$ subset of $X$. Thus it suffices to show that $D_\eta(x)$ is dense.
For every a non-empty open subset $U$ of $X$, there exist two points $y, z\in U$ with $(y,z)\in D_{2\eta}$,
that is
\[\limsup_{n\to\infty}\frac{1}{n}\sum_{i=0}^{n-1}d(T^iy,T^iz)>2\eta.\]
Then
\begin{align*}
2\eta< \limsup_{n\to\infty}\frac{1}{n}\sum_{i=0}^{n-1}d(T^iy,T^iz)
&\leq\limsup_{n\to\infty}\frac{1}{n}\sum_{i=0}^{n-1}(d(T^ix,T^iy)+d(T^ix,T^iz))\\
&\leq\limsup_{n\to\infty}\frac{1}{n}\sum_{i=0}^{n-1}d(T^ix,T^iy)+ \limsup_{n\to\infty}\frac{1}{n}\sum_{i=0}^{n-1}d(T^ix,T^iz).
\end{align*}
Thus, either
\[\limsup_{n\to\infty}\frac{1}{n}\sum_{i=0}^{n-1}d(T^ix,T^iy)>\eta\]
or
\[\limsup_{n\to\infty}\frac{1}{n}\sum_{i=0}^{n-1}d(T^ix,T^iz)>\eta,\]
that is either $y\in D_\eta(x)$ or $z\in D_\eta(x)$.

\eqref{enum:x-cell}$\Rightarrow$\eqref{enum:mean-sensitive-open} is obvious.

\eqref{enum:mean-sensitive-open}$\Rightarrow$\eqref{enum:mean-sensitive}
For any $x\in X$ and $\ep>0$, there are $y,z\in B(x,\ep)$
satisfying
\[\limsup_{n\to\infty}\frac{1}{n}\sum_{i=0}^{n-1}d(T^iy,T^iz)>\delta.\]
Then either
\[\limsup_{n\to\infty}\frac{1}{n}\sum_{i=0}^{n-1}d(T^ix,T^iy)>\frac{\delta}{2}\]
or
\[\limsup_{n\to\infty}\frac{1}{n}\sum_{i=0}^{n-1}d(T^ix,T^iz)>\frac{\delta}{2},\]
which implies that $(X,T)$ is mean sensitive.
\end{proof}

A point $x\in X$ is \emph{mean sensitive} if there exists $\delta>0$ such that for every $\ep>0$
there is $y\in B(x,\ep)$ satisfying
\[\limsup_{n\to\infty}\frac{1}{n}\sum_{i=0}^{n-1}d(T^ix,T^iy)>\delta.\]
It is clear that a point is either mean equicontinuous or mean sensitive.
The following example indicates that although in a dynamical system every point is mean sensitive,
the dynamical system may not be sensitive.
\begin{exmp}
Let $T\colon [0,1]\to [0,1]$ be the standard tent map, that is $T(x)=1-|1-2x|$.
Let
\[Y=\{(x,0)\colon x\in[0,1]\}\cup\bigcup_{k=1}^\infty \{-\tfrac{1}{k}\}\times[0,\tfrac{1}{k}],\]
be endowed with metric induced by the Euclidean metric.
For $x\in [0,1]$, put $S(x,0)=S(Tx,0)$, and for $k\geq 1$ and $x\in[0, \tfrac{1}{k}]$, put
$S(-\tfrac{1}{k},x)=(-\tfrac{1}{k},\tfrac{1}{k} T(kx))$.
It is not hard to verify that every point in $(Y,S)$ is mean sensitive, but $(Y,S)$ is not sensitive.
\end{exmp}

\begin{prop}\label{prop:mean-sensitive-point}
Let $(X,T)$ be a transitive system.
If there exists a transitive point which is mean sensitive, then $(X,T)$ is mean sensitive.
\end{prop}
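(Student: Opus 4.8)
The plan is to reduce the statement to the open-set reformulation of mean sensitivity provided by Proposition~\ref{prop:mean-sensitive-equi}: the system $(X,T)$ is mean sensitive if and only if there is a constant $\delta>0$ such that every non-empty open subset of $X$ contains a pair of points whose orbits have mean distance exceeding $\delta$. Accordingly, let $x_0$ denote the given transitive point and let $\delta>0$ be the constant witnessing its mean sensitivity. It then suffices to prove that every non-empty open set $U\subset X$ contains two points with mean distance larger than $\delta$, and the conclusion follows by invoking Proposition~\ref{prop:mean-sensitive-equi} once more.

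The key elementary fact I would isolate first is that the quantity $\limsup_{n\to\infty}\frac1n\sum_{i=0}^{n-1}d(T^ix,T^iy)$ is invariant under applying $T^k$ to both coordinates. Indeed, writing $a_j=d(T^jx,T^jy)$, the two Ces\`aro averages $\frac1n\sum_{i=0}^{n-1}a_{i+k}$ and $\frac1n\sum_{i=0}^{n-1}a_i$ differ only in at most $k$ leading and $k$ trailing terms, each bounded by $\diam(X)$, so their difference is at most $\frac{2k\,\diam(X)}{n}\to 0$. Hence
\[\limsup_{n\to\infty}\frac1n\sum_{i=0}^{n-1}d\bigl(T^i(T^kx),T^i(T^ky)\bigr)=\limsup_{n\to\infty}\frac1n\sum_{i=0}^{n-1}d(T^ix,T^iy).\]

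Now, given a non-empty open set $U$, I would use that $x_0$ is a transitive point to pick $k\geq 0$ with $T^kx_0\in U$, and then choose $r>0$ with $B(T^kx_0,r)\subset U$. By continuity of $T^k$ there is $\ep>0$ so that $d(y,x_0)<\ep$ forces $d(T^ky,T^kx_0)<r$, hence $T^ky\in U$. Applying the mean sensitivity of $x_0$ to this particular $\ep$ yields a point $y\in B(x_0,\ep)$ with $\limsup_{n\to\infty}\frac1n\sum_{i=0}^{n-1}d(T^ix_0,T^iy)>\delta$. By the shift-invariance established above, the pair $T^kx_0,\,T^ky\in U$ then satisfies $\limsup_{n\to\infty}\frac1n\sum_{i=0}^{n-1}d(T^i(T^kx_0),T^i(T^ky))>\delta$, which is exactly the required property for $U$.

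No step here poses a genuine difficulty; the conceptual content is simply that transitivity lets one transport the single ``sensitive pair'' guaranteed at $x_0$ into an arbitrary open set, while the Ces\`aro average tolerates the finite time-shift that this transport introduces. The only points demanding care are the verification of the shift-invariance of the limsup average and the order of the quantifiers, namely that $k$ must be fixed (and $r$ chosen) before extracting $\ep$ from the continuity of $T^k$ and only then invoking the mean sensitivity of $x_0$.
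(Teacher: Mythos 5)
Your argument is correct and follows essentially the same route as the paper: use transitivity to find $k$ and $\ep$ with $T^k(B(x_0,\ep))\subset U$, transport the sensitive pair $(x_0,y)$ into $U$ via $T^k$, note that the Ces\`aro limsup is unchanged under this finite time-shift, and conclude via the open-set characterization in Proposition~\ref{prop:mean-sensitive-equi}. Your explicit verification of the shift-invariance is a detail the paper leaves implicit, but the substance is identical.
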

\begin{proof}
Let $x\in X$ be a mean sensitive transitive point, that is
there exists $\delta>0$ such that for every $\ep>0$
there is $y\in B(x,\ep)$ satisfying
\[\limsup_{n\to\infty}\frac{1}{n}\sum_{i=0}^{n-1}d(T^ix,T^iy)>\delta.\]
Fix a non-empty open subset $U$ of $X$.
As $x$ is a transitive point, there exist $\ep>0$ and $k\in\Z$ such that
$T^k(B(x,\ep))\subset U$. There exists $y\in B(x,\ep)$ satisfying
\[\limsup_{n\to\infty}\frac{1}{n}\sum_{i=0}^{n-1}d(T^ix,T^iy)>\delta.\]
Let $u=T^kx$ and $v=T^ky$. Then $u,v\in U$ and
\begin{align*}
\limsup_{n\to\infty}\frac{1}{n}\sum_{i=0}^{n-1}d(T^iu,T^iv)&=
\limsup_{n\to\infty}\frac{1}{n}\sum_{i=0}^{n-1}d(T^i(T^kx),T^i(T^ky))\\
&=\limsup_{n\to\infty}\frac{1}{n}\sum_{i=0}^{n-1}d(T^ix,T^iy)
>\delta.
\end{align*}
Therefore $(X,T)$ is mean sensitive by Proposition~\ref{prop:mean-sensitive-equi}.
\end{proof}

Now we have the following dichotomy for transitive systems.
\begin{thm}\label{almostmeorms}
Let $(X,T)$ be a dynamical system.
If $(X,T)$ is transitive, then $(X,T)$ is either almost mean equicontinuous or mean sensitive.
\end{thm}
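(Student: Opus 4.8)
The plan is to reduce the dichotomy to the two alternatives for the set $\mathcal{E}$ of mean equicontinuous points: either $\mathcal{E}\neq\emptyset$, in which case $(X,T)$ is almost mean equicontinuous by definition and there is nothing more to prove, or $\mathcal{E}=\emptyset$, in which case I must produce mean sensitivity. So the real content is the implication $\mathcal{E}=\emptyset\Rightarrow(X,T)$ is mean sensitive, and I would obtain it by chaining together Propositions~\ref{pro-weiss}, \ref{prop:mean-equi-point} and \ref{prop:mean-sensitive-equi}.

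First I would exploit the structure of $\mathcal{E}$ recorded in Proposition~\ref{pro-weiss}, namely $\mathcal{E}=\bigcap_{m=1}^{\infty}\mathcal{E}_{1/m}$ with each $\mathcal{E}_{\ep}$ open and inversely invariant. As noted in the proof of Proposition~\ref{prop:mean-equi-point}, transitivity forces every such open inversely invariant set $\mathcal{E}_{\ep}$ to be either empty or dense. Hence if every $\mathcal{E}_{1/m}$ were dense (and open), the Baire Category Theorem would make $\mathcal{E}=\bigcap_{m}\mathcal{E}_{1/m}$ residual and in particular nonempty, contradicting $\mathcal{E}=\emptyset$. Therefore there must exist some $m_0$ with $\mathcal{E}_{1/m_0}=\emptyset$; isolating a genuinely empty level set is the crux of the argument.

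Next I would unwind the meaning of $\mathcal{E}_{1/m_0}=\emptyset$ and match it to the open-set characterization of mean sensitivity, namely condition~\eqref{enum:mean-sensitive-open} of Proposition~\ref{prop:mean-sensitive-equi}. By the very definition of $\mathcal{E}_{\ep}$, the emptiness of $\mathcal{E}_{1/m_0}$ says precisely that for every $x\in X$ and every $\delta>0$ there exist $y,z\in B(x,\delta)$ with $\limsup_{n\to\infty}\frac{1}{n}\sum_{i=0}^{n-1}d(T^iy,T^iz)\geq 1/m_0$. Given any non-empty open set $U$, I would choose $x\in U$ and $\delta>0$ with $B(x,\delta)\subset U$; the preceding sentence then yields $y,z\in B(x,\delta)\subset U$ with mean distance at least $1/m_0>1/(2m_0)$. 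Thus condition~\eqref{enum:mean-sensitive-open} holds with sensitivity constant $1/(2m_0)$, and Proposition~\ref{prop:mean-sensitive-equi} gives that $(X,T)$ is mean sensitive.

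I expect no serious analytic difficulty here: once the empty level set $\mathcal{E}_{1/m_0}$ is found, the translation of its negation into the open-set form of mean sensitivity is routine. The only step demanding care is the ``empty or dense'' alternative for the sets $\mathcal{E}_{\ep}$ together with the Baire-category passage from $\mathcal{E}=\emptyset$ to the emptiness of a single $\mathcal{E}_{1/m_0}$. (One could additionally remark that the two alternatives are mutually exclusive, since a mean equicontinuous point $x$ is incompatible with a mean-sensitivity constant $\delta_0$: applying the mean equicontinuity of $x$ with $\ep=\delta_0/2$ would contradict the existence of points near $x$ whose mean distance from $x$ exceeds $\delta_0$; but this exclusivity is not needed for the stated dichotomy.)
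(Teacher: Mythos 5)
Your proof is correct, but it takes a different route from the paper's. The paper's argument is purely pointwise: it fixes a transitive point $x$, observes that $x$ is either mean sensitive or mean equicontinuous as a point, and in the sensitive case invokes Proposition~\ref{prop:mean-sensitive-point} (which pushes the sensitivity of a transitive point forward along its orbit to every open set) to conclude mean sensitivity of the system; in the other case $x$ itself witnesses almost mean equicontinuity. You instead work with the level sets $\mathcal{E}_{1/m}$: using their openness and inverse invariance from Proposition~\ref{pro-weiss}, the ``empty or dense'' alternative under transitivity, and the Baire Category Theorem, you extract from $\mathcal{E}=\emptyset$ a single empty level set $\mathcal{E}_{1/m_0}$, whose negated definition is exactly condition~\eqref{enum:mean-sensitive-open} of Proposition~\ref{prop:mean-sensitive-equi} with constant $1/(2m_0)$. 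Both arguments are short and lean on machinery already established in the paper; yours bypasses Proposition~\ref{prop:mean-sensitive-point} entirely and makes the uniformity of the sensitivity constant transparent (it comes from the single index $m_0$), while the paper's is slightly more economical in that it never needs the Baire argument in the sensitive branch. Your closing remark on mutual exclusivity of the two alternatives is accurate and, as you say, not needed for the statement.
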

\begin{proof}
Let $x\in X$ be a transitive point.
If $x$ is mean sensitive, then $(X,T)$ is mean sensitive by Proposition~\ref{prop:mean-sensitive-point}.
If $x$ is not mean sensitive, then it is mean equicontinuous.
So $(X,T)$ is almost mean equicontinuous by Proposition~\ref{prop:mean-equi-point}.
\end{proof}

\begin{cor}\label{meorms}
Let $(X,T)$ be a minimal system.
Then $(X,T)$ is either mean equicontinuous or mean sensitive.
\end{cor}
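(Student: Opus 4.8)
The plan is to deduce the minimal dichotomy directly from the transitive dichotomy already established in Theorem~\ref{almostmeorms}, upgrading the ``almost'' in the equicontinuous alternative to a global statement by means of minimality.

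First I would observe that every minimal system is in particular transitive, so Theorem~\ref{almostmeorms} applies verbatim: $(X,T)$ is either almost mean equicontinuous or mean sensitive. The second alternative is already one of the two cases appearing in the conclusion, so nothing further is required in that branch.

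It then remains to treat the case where $(X,T)$ is almost mean equicontinuous. Here I would invoke minimality through Proposition~\ref{prop:mean-equi-point}(2), which asserts precisely that a minimal almost mean equicontinuous system is mean equicontinuous. Conceptually this holds because, by part~(1) of that proposition, the set of mean equicontinuous points contains every transitive point; since in a minimal system every point is transitive, the entire space $X$ consists of mean equicontinuous points, and by the compactness remark following the definition of a mean equicontinuous point in Section~4 this forces $(X,T)$ itself to be mean equicontinuous. Combining the two branches yields the desired dichotomy.

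Because both cited results carry the real weight of the argument, there is no genuine obstacle to overcome; the only point worth flagging is that the two alternatives are in fact mutually exclusive (a mean equicontinuous system cannot be mean sensitive, since given the sensitivity constant $\delta$ one may apply the mean-equicontinuity modulus at scale $\delta$ to any pair of sufficiently close points and obtain a contradiction), so the statement is a true dichotomy rather than a mere disjunction.
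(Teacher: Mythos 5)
Your argument is correct and is exactly the one the paper intends: the corollary is stated immediately after Theorem~\ref{almostmeorms} with no separate proof, the implicit reasoning being that a minimal system is transitive, so the transitive dichotomy applies, and Proposition~\ref{prop:mean-equi-point}(2) upgrades ``almost mean equicontinuous'' to ``mean equicontinuous'' under minimality. Your closing remark on mutual exclusivity is a harmless (and correct) bonus not required for the disjunction as stated.
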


\begin{rem}\label{rem:mean-sensitive-point}
In fact, from the proof of Theorem~\ref{thm:mean-equi-factor} we have that
if $\pi:(X,T)\to (Y,S)$ is a factor map and $y\in Y$ is a mean sensitive point,
then there exists a mean sensitive point $x\in \pi^{-1}(y)$.
\end{rem}

By Lemma~\ref{lem:mean-equi-point-open}, we have the following result.
\begin{lem}
Let  $\pi:(X,T)\to (Y,S)$ be a factor map.
Assume that $y\in Y$ is a mean sensitive point and $x\in \pi^{-1}(y)$.
If $\pi$ is open at $x$, then $x$ is also a mean sensitive point.
\end{lem}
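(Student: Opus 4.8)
The plan is to prove the contrapositive of the key implication in Lemma~\ref{lem:mean-equi-point-open}: there, if $x$ is mean equicontinuous and $\pi$ is open at $x$, then $y=\pi(x)$ is mean equicontinuous. We want the ``sensitive'' analogue, so I would argue that if $x\in\pi^{-1}(y)$ were \emph{not} mean sensitive, then $x$ would be mean equicontinuous, and combined with openness of $\pi$ at $x$ this would force $y$ to be mean equicontinuous, contradicting that $y$ is mean sensitive. This reduces the statement entirely to Lemma~\ref{lem:mean-equi-point-open} together with the basic dichotomy (observed just before Proposition~\ref{prop:mean-sensitive-point}) that every point is either mean equicontinuous or mean sensitive.

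First I would invoke that dichotomy: suppose for contradiction that $x$ is not mean sensitive. Then by definition $x$ is a mean equicontinuous point of $(X,T)$. Next I would apply Lemma~\ref{lem:mean-equi-point-open} directly, using the hypothesis that $\pi$ is open at $x$: its conclusion is precisely that $y=\pi(x)$ is a mean equicontinuous point of $(Y,S)$. But $y$ being mean equicontinuous contradicts the assumption that $y$ is a mean sensitive point, since no point can be both (a mean equicontinuous point satisfies $\limsup_{n\to\infty}\frac1n\sum_{i=0}^{n-1}d(S^iy,S^iy')<\ep$ for all $y'$ in a small ball, whereas a mean sensitive point admits, for some fixed $\delta>0$, arbitrarily nearby $y'$ with the same average exceeding $\delta$). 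This contradiction shows $x$ must be mean sensitive.

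I do not expect any genuine obstacle here, since the result is a short logical consequence of an already-established lemma; the only point requiring care is verifying cleanly that ``mean equicontinuous point'' and ``mean sensitive point'' are mutually exclusive, which is immediate from comparing the two definitions. One subtlety worth stating explicitly is that the notion of mean equicontinuity used in Lemma~\ref{lem:mean-equi-point-open} is phrased for the point $x$ and its images, so I would make sure the mean equicontinuity of $x$ as a point of $(X,T)$ is exactly the hypothesis consumed by that lemma, which it is. Thus the proof is essentially a two-line reduction.

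\begin{proof}
Suppose, for contradiction, that $x$ is not a mean sensitive point.
Since every point is either mean equicontinuous or mean sensitive,
$x$ is a mean equicontinuous point of $(X,T)$.
As $\pi$ is open at $x$, Lemma~\ref{lem:mean-equi-point-open} implies that
$y=\pi(x)$ is a mean equicontinuous point of $(Y,S)$.
This contradicts the assumption that $y$ is a mean sensitive point,
because a point cannot be simultaneously mean equicontinuous and mean sensitive.
Therefore $x$ is a mean sensitive point.
\end{proof}
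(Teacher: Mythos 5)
Your proof is correct and is exactly the paper's intended argument: the paper states this lemma with the single phrase ``By Lemma~\ref{lem:mean-equi-point-open}'', leaving precisely your contrapositive-plus-dichotomy reduction to the reader. Nothing more is needed.
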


Similarly to Theorem~\ref{thm:almost-semi-open}, we have

\begin{thm}
Let  $\pi:(X,T)\to (Y,S)$ be a factor map between transitive systems.
Assume that $(Y,S)$ is mean sensitive. If $\pi$ is semi-open,  then $(X,T)$ is mean sensitive.
\end{thm}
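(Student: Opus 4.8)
The plan is to prove the contrapositive-flavored statement by combining the two immediately preceding results: the point-wise lifting lemma (if $y \in Y$ is mean sensitive and $\pi$ is open at $x \in \pi^{-1}(y)$, then $x$ is mean sensitive) together with the semi-openness dichotomy on the set of openness points. Since $(Y,S)$ is mean sensitive and transitive, by Proposition~\ref{prop:mean-sensitive-equi} the set of mean sensitive points of $Y$ is large; in particular I can find a \emph{transitive} mean sensitive point $y \in Y$. This is the $Y$-side analogue of how Theorem~\ref{thm:almost-semi-open} selected a transitive mean equicontinuous point.

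Next I would use the semi-openness of $\pi$. By \cite[Lemma~2.1]{G07}, since $\pi$ is semi-open the set $\{x \in X : \pi \text{ is open at } x\}$ is residual in $X$. The fiber $\pi^{-1}(y)$ over my chosen transitive point need not meet this residual set directly, so the main technical step is to arrange a point $x \in X$ that is simultaneously (i) a point where $\pi$ is open, and (ii) sitting over a mean sensitive point of $Y$. The clean way is to pick $x$ to be a transitive point of $X$ at which $\pi$ is open (the set of such $x$ is residual, being the intersection of the residual transitive set with the residual openness set). Then $\pi(x)$ is a transitive point of $Y$, and since mean sensitivity of $Y$ makes every transitive point of $Y$ mean sensitive (by the inverse-invariance and density arguments underlying Proposition~\ref{prop:mean-sensitive-equi} and Proposition~\ref{prop:mean-sensitive-point}, dually to Proposition~\ref{prop:mean-equi-point}), $\pi(x)$ is mean sensitive.

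Having secured such an $x$, I apply the preceding Lemma: $\pi(x)$ is mean sensitive and $\pi$ is open at $x$, so $x$ itself is a mean sensitive point of $(X,T)$. Finally, $x$ is a transitive point of $X$, so by Proposition~\ref{prop:mean-sensitive-point} the existence of a mean sensitive transitive point forces $(X,T)$ to be mean sensitive. This closes the argument.

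The step I expect to be the main obstacle is justifying that one may choose $x$ transitive \emph{and} lying over a mean sensitive point \emph{and} at which $\pi$ is open, all at once. Transitivity and openness are each residual and hence their intersection is residual; the remaining point is that a transitive point of $X$ projects to a transitive point of $Y$ (immediate from $\pi \circ T = S \circ \pi$ and surjectivity of $\pi$), and that every transitive point of the mean sensitive system $Y$ is mean sensitive. This last fact is the dual of Proposition~\ref{prop:mean-equi-point}(1) and follows the same inverse-invariance reasoning; if it is not stated in the cited form, I would instead invoke Remark~\ref{rem:mean-sensitive-point} to lift mean sensitivity fiberwise and then patch with the openness residual set, but the transitive-point route above is the most economical and mirrors the proof of Theorem~\ref{thm:almost-semi-open} exactly.
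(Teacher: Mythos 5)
Your proposal is correct and follows essentially the same route the paper intends (it proves this theorem by the same template as Theorem~\ref{thm:almost-semi-open}: intersect the residual set of transitive points with the residual set of points where $\pi$ is open, project down, lift mean sensitivity of $\pi(x)$ to $x$ via the preceding lemma, and conclude with Proposition~\ref{prop:mean-sensitive-point}). The only simplification available is that the step you flag as the "main obstacle" is immediate: since $(Y,S)$ is mean sensitive as a system, \emph{every} point of $Y$ is a mean sensitive point directly from the definition, so no density or inverse-invariance argument on the $Y$ side is needed.
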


\begin{prop}
Let $\pi:(X,T)\to (Y,S)$ be a factor map between transitive systems.
If $(Y,S)$ is mean sensitive, then there is a mean sensitive subsystem $(Z,T)$ of $(X,T)$.
\end{prop}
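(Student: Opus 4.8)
The plan is to remove the semi-openness hypothesis of the preceding Theorem by replacing $(X,T)$ with a suitable transitive subsystem that still maps onto $Y$. First I would fix a transitive point $y_0$ of $(Y,S)$, choose any point $x_0\in\pi^{-1}(y_0)$, and set $Z=\overline{Orb(x_0,T)}$. Then $(Z,T)$ is a subsystem of $(X,T)$ in which $x_0$ has dense orbit, so $(Z,T)$ is transitive; moreover $\pi(Z)=\overline{Orb(y_0,S)}=Y$ because $y_0$ is transitive, so the restriction $\phi:=\pi|_Z\colon (Z,T)\to(Y,S)$ is a factor map between transitive systems. If one insists on a genuine transitive point in the sense $\omega(x,T)=Z$, one may instead take $Z$ to be minimal among the closed invariant subsets of $X$ mapping onto $Y$ (obtained by Zorn's lemma, using that a nested intersection of closed invariant sets mapping onto $Y$ still maps onto $Y$ by compactness of the fibers); for such a $Z$ any point $x\in\pi^{-1}(y_0)\cap Z$ has $\omega(x,T)$ closed, invariant and mapping onto $\omega(y_0,S)=Y$, whence $\omega(x,T)=Z$ by minimality.

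The heart of the argument is to show that $\phi$ is semi-open. This should follow from transitivity of $Z$ alone: for a nonempty open $U\subseteq Z$, the set $\bigcup_{n\ge 0}T^{-n}U$ is open and contains the dense orbit of the transitive point, hence is dense in $Z$; applying the continuous surjection $\phi$ and using $\phi(T^{-n}U)\subseteq S^{-n}\phi(U)$ shows that $\bigcup_{n\ge 0}S^{-n}\phi(U)$ is dense in $Y$, and a Baire-category argument then forces $\phi(U)$ to have nonempty interior. This is the standard fact that a factor map out of a transitive system is semi-open, and it is exactly the place where transitivity of $Z$ is used. I expect this to be the main technical obstacle, and I would either carry out the category argument in detail or quote it together with \cite[Lemma~2.1]{G07}.

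Once $\phi$ is known to be semi-open the conclusion is immediate: $\phi\colon(Z,T)\to(Y,S)$ is a semi-open factor map between transitive systems and $(Y,S)$ is mean sensitive, so by the preceding Theorem $(Z,T)$ is mean sensitive, giving the required mean sensitive subsystem of $(X,T)$. As an alternative route avoiding the category argument, one could instead invoke Remark~\ref{rem:mean-sensitive-point} to produce a mean sensitive point $x_0\in\pi^{-1}(y_0)$, again set $Z=\overline{Orb(x_0,T)}$, and try to verify the open-set criterion \eqref{enum:mean-sensitive-open} of Proposition~\ref{prop:mean-sensitive-equi} directly inside $Z$. There the difficulty reappears as the need to locate the mean-sensitivity witnesses for $x_0$ inside $Z$ rather than merely in $X$, which once more reduces to openness of $\phi$ at $x_0$ via Lemma~\ref{lem:mean-equi-point-open}; so the semi-openness step is unavoidable and is the genuine content of the proof.
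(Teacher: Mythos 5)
There is a genuine gap at the step you yourself identify as the heart of the argument: it is \emph{not} true that a factor map out of a merely transitive system is semi-open, and your Baire-category sketch does not establish it. What your argument actually shows is that $\bigcup_{n\ge 0}S^{-n}\phi(U)$ is dense in $Y$; but a countable union of nowhere dense sets can perfectly well be dense (it is merely meager), so density forces none of the pieces to have nonempty interior. Concretely, let $Y=\Sigma_2$ and let $X\subset\{0,1,2\}^{\N}$ be the orbit closure of a point obtained from a transitive point of $\Sigma_2$ by inserting blocks $2^k0^{m_k}$ with $m_k$ growing rapidly, and let $\pi$ be the one-block code sending $2\mapsto 0$: then the image under $\pi$ of the (open) set of points of $X$ beginning with the symbol $2$ is a countable closed subset of $\Sigma_2$, hence nowhere dense, so $\pi$ is not semi-open even though both systems are transitive and $\pi$ is onto. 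The standard semi-openness fact needs minimality, which your $Z$ does not have and cannot be given (a minimal subsystem of $X$ need not map onto $Y$ unless $Y$ itself is minimal); taking $Z$ merely minimal among closed invariant sets with $\pi(Z)=Y$ does not rescue the category argument.

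The paper's proof sidesteps semi-openness entirely, and it is essentially your ``alternative route'' with the order of operations reversed. First use Zorn's lemma to choose $Z$ minimal among the closed invariant subsets of $X$ with $\pi(Z)=Y$. Then apply Remark~\ref{rem:mean-sensitive-point} to the \emph{restricted} factor map $\pi|_Z\colon(Z,T)\to(Y,S)$ --- not to $\pi$ itself --- over a transitive point $y$ of $Y$ (every point of a mean sensitive system is a mean sensitive point). This produces $x\in Z\cap\pi^{-1}(y)$ that is mean sensitive as a point of the subsystem $(Z,T)$, so the witnesses you were worried about automatically live in $Z$. Finally, $\omega(x,T)$ is a closed invariant subset of $Z$ with $\pi(\omega(x,T))=\omega(y,S)=Y$, hence $\omega(x,T)=Z$ by the minimality of $Z$; thus $x$ is a transitive point of $(Z,T)$, and Proposition~\ref{prop:mean-sensitive-point} yields that $(Z,T)$ is mean sensitive. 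So the semi-openness step is not ``unavoidable'': the correct move is to restrict the factor map first and only then produce the mean sensitive point.
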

\begin{proof}
Using Zorn's Lemma and the compactness of $X$, we can find a subsystem  $(Z,T)$ of $(X,T)$ such that
$\pi(Z)=Y$ and $Z$ is minimal with respect to this property.
Let $y\in Y$ be a transitive point which is also a mean sensitive point.
By Remark~\ref{rem:mean-sensitive-point} there is a mean sensitive point $x\in (Z,T)$ with $\pi(x)=y$.
Let $Z'=\omega(x,T)$. Then $(Z',T)$ is a subsystem of $(Z,T)$ with $\pi(Z')=Y$.
The minimality of $Z$ implies that $Z'=Z$.
Then $x$ is a transitive point of $(Z,T)$ and $(Z,T)$ is mean sensitive.
\end{proof}

\section{Banach mean equicontinuity and Almost Banach mean equicontinuity}
Globally speaking a mean equicontinuous system is `simple', since it is a Banach proximal extension of
an equicontinuous system and each of its ergodic measures has discrete spectrum.
Unfortunately, the local version does not behave so well, as Theorem~\ref{p-entropy} shows.
In this section we introduce the notion of Banach mean equicontinuity,
whose local version has the better behavior that we are looking for. 

Let $(X,T)$ be a dynamical system. We say that $(X,T)$ is \emph{Banach mean equicontinuous} ({\bf BME} for short)
if for every $\ep>0$,  there exists $\delta>0$ such that whenever $x,y\in X$ with $d(x,y)<\delta$,
\[\limsup_{M-N\to\infty}\frac{1}{M-N}\sum_{i=N}^{M-1}d(T^ix,T^iy)<\ep.\]

A point $x\in X$ is called \emph{{\bf BME}}
if for every $\ep>0$, there is $\delta>0$ such that for every $y\in B(x,\delta)$,
\[\limsup_{M-N\to\infty}\frac{1}{M-N}\sum_{i=N}^{M-1}d(T^ix,T^iy)<\ep.\]
We say that a transitive system $(X,T)$ is \emph{almost Banach mean equicontinuous} ({\bf ABME} for short)
if there is at least one {\bf BME} point.

By the compactness of $X$, $(X,T)$ is {\bf BME} if and only if every point in $X$ is {\bf BME}.
Following the proofs of Propositions~ \ref{pro-weiss} and ~\ref{prop:mean-equi-point}
we know that the set of {\bf BME} points is a $G_\delta$ set, and if $(X,T)$
is {\bf ABME}, then every transitive point is {\bf BME}.

A dynamical system $(X,T)$ is \emph{Banach mean sensitive} ({\bf BMS} for short)
if there exists $\delta>0$ such that for every $x\in X$ and every $\ep>0$
there is $y\in B(x,\ep)$ satisfying
\[\limsup_{M-N\to\infty}\frac{1}{M-N}\sum_{i=N}^{M-1}d(T^ix,T^iy)>\delta.\]

We list several properties whose proofs are similar to ones in the previous sections.

\begin{prop}\label{property-6}
Let $(X,T)$ be a dynamical system.
\begin{enumerate}
\item $(X,T)$ is {\bf BME} if and only if for every $\ep>0$,
there is a $\delta>0$ such that $d(x,y)<\delta$ implies $d(T^nx,T^ny)<\ep$ for all $n\in\Z$ except a set of
upper Banach density less than $\ep$.
\item $(X,T)$ is {\bf BMS} if and only if
  there is a $\delta>0$ such that
  for every $x\in X$ and every neighborhood $U$ of $x$, there is $y\in U$ such that
  $d(T^nx,T^ny)>\delta$ in a set of upper Banach density larger than $\delta$.
\item Every dynamical system has a maximal {\bf BME} factor.
\item Let $\pi:(X,T)\to (Y,S)$ be a factor map.
If $(X,T)$ is {\bf BME}, then so is $(Y,S)$.
\item If $(X,T)$ is a transitive system,
then $(X,T)$ is either {\bf ABME} or {\bf BMS}; and if $(X,T)$ is a minimal system, then $(X,T)$ is either  {\bf BME} or {\bf BMS}.
\item Let $\pi\colon(X,T)\to (Y,S)$ be a factor map between transitive systems. Suppose that $\pi$ is semi-open.
\begin{enumerate}
  \item If $(X,T)$ is {\bf ABME}, then so is $(Y,S)$.
  \item If $(Y,S)$ is {\bf BMS}, then so is $(X,T)$.
\end{enumerate}
\end{enumerate}
\end{prop}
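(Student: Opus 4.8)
The plan is to carry each item over from the corresponding statement of the previous two sections, replacing throughout the Cesàro average $\frac1n\sum_{i=0}^{n-1}$ by the Banach average $\frac{1}{M-N}\sum_{i=N}^{M-1}$ and the upper density $\overline D$ by the upper Banach density $BD^*$. The two features that make this substitution harmless are, first, that $\limsup(a_n+b_n)\le\limsup a_n+\limsup b_n$ holds for the limit along long intervals exactly as for initial segments, so every triangle-inequality estimate of the previous sections (in which the three averages are taken over the \emph{same} window $[N,M-1]$) goes through verbatim; and second, that $BD^*$ is monotone, so the pointwise set inclusions used before are unaffected. I would begin with (1), the Banach analogue of Lemma~\ref{lem:mean-equi-mean-L-stalbe}: writing $E=\{k\in\Z:d(T^kx,T^ky)\ge\ep\}$ one has $\frac{1}{M-N}\sum_{i=N}^{M-1}d(T^ix,T^iy)\ge \ep\,\frac{\#(E\cap[N,M-1])}{M-N}$, whose $\limsup$ is $\ep\,BD^*(E)$, giving one direction; the converse uses the $\diam(X)$ bound on the complement exactly as before. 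Item (2) is the same bookkeeping applied to the definition of {\bf BMS}, translating ``large Banach average'' into ``bad set of positive upper Banach density''.

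For (3) I would invoke the Ellis--Gottschalk criterion from \cite{EG60} as in Theorem~\ref{thm:max-mean-equi-factor}: it suffices that {\bf BME} be preserved under subsystems, which is immediate, and under products, which is the Banach analogue of Lemma~\ref{lem:mean-prod} and follows because the Banach average of the product metric is controlled by the sum of the two coordinate Banach averages. Item (4) reproduces the contradiction argument of Theorem~\ref{thm:mean-equi-factor}: assuming $(Y,S)$ is not {\bf BME} one extracts sequences $y_k,z_k\to y$ with large Banach average, lifts $y_k$ to $x_k\to x$ with $\pi(x_k)=y_k$, and uses {\bf BME} of $(X,T)$ together with the uniform continuity of $\pi$; here the set $F=\{k:d(S^ky_j,S^ky)\ge\eta\}$ satisfies $F\subset E=\{k:d(T^kx_j,T^kx)\ge\theta\}$ by the same pointwise reasoning, and since $BD^*$ is monotone the chain of estimates $\le(\diam(Y)+1)\eta<\delta$ closes exactly as before.

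Items (5) and (6) rest on three structural lemmas, each the Banach copy of one already proved: that the set of {\bf BME} points is a $G_\delta$ and is inversely invariant (as in Proposition~\ref{pro-weiss}), so that in a transitive system it is empty or residual and every transitive point is {\bf BME} once one exists (as in Proposition~\ref{prop:mean-equi-point}); that a transitive {\bf BMS} point forces the whole system to be {\bf BMS} (as in Proposition~\ref{prop:mean-sensitive-point}), using the shift-invariance of the Banach average, which is in fact cleaner here since translating the window $[N,M-1]$ to $[N+k,M+k-1]$ leaves the $\limsup$ unchanged; and that $\pi$ being open at a {\bf BME} point pushes that point to a {\bf BME} point of the factor (as in Lemma~\ref{lem:mean-equi-point-open}). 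The dichotomy (5) then follows as in Theorem~\ref{almostmeorms} and Corollary~\ref{meorms} by testing a transitive point (resp. an arbitrary point, in the minimal case) against the alternative ``{\bf BME} or {\bf BMS}''; and (6) follows as in Theorem~\ref{thm:almost-semi-open}, using that a semi-open map is open on a residual set \cite{G07} to select a transitive point at which $\pi$ is open and then transporting the relevant property through it.

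I expect no genuine obstacle beyond diligence: the only point deserving attention is that every estimate be arranged so that the competing averages share one common window $[N,M-1]$ before the $\limsup$ is taken, since subadditivity of $\limsup$ fails if the windows are allowed to drift apart. Once this is respected, each proof is a faithful transcription of its Cesàro predecessor.
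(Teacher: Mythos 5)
Your proposal is correct and follows exactly the route the paper intends: the paper gives no written proof of Proposition~\ref{property-6}, stating only that ``the proofs are similar to ones in the previous sections,'' and your item-by-item transcription (with the caveat that all competing averages must be taken over a common window $[N,M-1]$ before applying subadditivity of $\limsup$, and that the Banach average is invariant under shifting the window) is precisely that similarity made explicit. No gaps.
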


The main result in this section is:
\begin{thm}\label{thm:pe-bms}
Let $(X,T)$ be a transitive system.
If the topological entropy of $(X,T)$ is positive, then $(X,T)$ is Banach mean sensitive.
\end{thm}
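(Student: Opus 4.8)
The plan is to show that positive topological entropy forces the existence of an ``independence set'' for two disjoint closed sets, and then to exploit the combinatorial richness of this set—together with transitivity—to manufacture, around an arbitrary point, a nearby point whose Ces\`aro-type average of distances is bounded below along a Banach-density-large set of times. The mechanism is that a transitive point visits a neighborhood of a positive-entropy subsystem along a syndetic-like structure, and entropy guarantees that within such visits one finds many independent coordinates where orbits can be driven apart.

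\medskip

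First I would invoke the local entropy theory: positive topological entropy of $(X,T)$ yields an \emph{entropy pair} (equivalently, by the combinatorial characterization of entropy pairs, a pair $(u,v)$ with disjoint closed neighborhoods $U_0 \ni u$, $U_1 \ni v$ having a positive-density—indeed positive upper Banach density—independence set). Concretely, there exist disjoint closed sets $U_0, U_1$ and an infinite set $J \subset \N$ of positive upper Banach density that is an independence set for $(U_0,U_1)$: for every finite $F \subset J$ and every $s\colon F \to \{0,1\}$, the set $\bigcap_{i\in F} T^{-i}U_{s(i)}$ is non-empty. Let $\delta = \tfrac12 d(U_0,U_1) > 0$; this will be the sensitivity constant. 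The key quantitative consequence I would extract is: if a point $w$ satisfies $T^i w \in U_0$ and another point $w'$ satisfies $T^i w' \in U_1$ for all $i$ in some set $L$, then $\tfrac{1}{\#(L\cap[N,M))}\sum_{i\in L\cap[N,M)} d(T^iw,T^iw') \ge 2\delta$, so a Banach-density-large $L$ forces a large Banach average.

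\medskip

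Next I would fix an arbitrary $x \in X$ and neighborhood $B(x,\ep)$ and, using the criterion \eqref{property-6}(2), aim to produce $y \in B(x,\ep)$ with $d(T^n x, T^n y) > \delta$ on a set of positive upper Banach density. The strategy is to choose, using the independence property along long blocks inside $J$, a point $y$ whose orbit is forced into the opposite set from $x$ (i.e.\ $T^i y \in U_{1-s_x(i)}$, where $s_x(i)$ records which of $U_0,U_1$ the point $T^i x$ lies in) on a long block of indices, while keeping $y$ close to $x$. Transitivity lets me transport a single such construction near a transitive point and then pull it back to an arbitrary $x$ by openness/semi-openness considerations, or more simply I would work directly with the equivalent open-set formulation of Banach mean sensitivity, producing two points in each non-empty open set $U$ that are driven $2\delta$ apart on a Banach-density-large set of times. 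The independence set of positive upper Banach density is exactly what supplies, inside arbitrarily placed long windows $[N,M)$, a uniformly positive fraction of indices on which the two chosen points can be forced into $U_0$ and $U_1$ respectively.

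\medskip

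\textbf{The main obstacle} I anticipate is the coordination between smallness in space and largeness in Banach density of the separating times. Producing \emph{some} pair that separates is easy from independence; producing a pair with one member inside a prescribed small ball $B(x,\ep)$ is the delicate part, since independence sets give existence of points realizing prescribed symbolic patterns but not automatically points close to a designated $x$. I expect to resolve this by combining the residual/transitive structure (a transitive point realizes every finite pattern arbitrarily far out, so its iterates enter $B(x,\ep)$ infinitely often) with the Banach-density lower bound: the positive upper Banach density of $J$ survives restriction to the long windows along which a returning orbit segment of the transitive point sits near $x$, and the independence property lets me perturb within such a window to flip the symbolic coordinates on a positive-Banach-density subset. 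Verifying that the resulting $\limsup_{M-N\to\infty}$ Banach average genuinely exceeds $\delta$—rather than merely the ordinary upper density—is where positivity of the \emph{upper Banach} density of the independence set (as opposed to mere positive upper density) is essential, and is the step I would check most carefully.
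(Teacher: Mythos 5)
There is a genuine gap, and you have in fact located it yourself: the coordination between ``$y$ close to $x$'' and ``separation on a set of positive upper Banach density'' is exactly the hard part, and your proposed resolution does not work. The independence property of $J$ for $(U_0,U_1)$ only asserts that for each finite $F\subset J$ and each $s\colon F\to\{0,1\}$ \emph{some} point realizes the pattern; it gives no control whatsoever on where that point lies, so you cannot place it in $B(x,\ep)$, and ``perturbing within a window to flip the symbolic coordinates'' of a point near $x$ is not something independence provides. Moreover the orbit of $x$ need not visit $U_0\cup U_1$ at times in $J$ at all, so the pattern $s_x(i)$ you want to negate is not even defined for most $i$. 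The fallback of ``openness/semi-openness considerations'' is also not available: there is no factor map in this statement.

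The paper's proof supplies two ideas that are absent from your sketch. First, the two nearby points are not a given point and a perturbation of it, but two iterates $T^{l_1}x$, $T^{l_2}x$ of the transitive point $x$ itself, with $l_1,l_2$ taken from the IP-set of return times $N(x,U)$ (Lemma~\ref{lem:rec-ip}); membership in $U$ is then automatic and the problem of locating a pattern-realizing point inside $U$ disappears. Second, and crucially, one needs the combinatorial Lemma~\ref{lem:pubd-ip} (proved via a Gillis-type intersection lemma and the Furstenberg correspondence principle, Proposition~\ref{prop:BD-delta}): the return times $l_1=q_1-q$, $l_2=q_2-q$ can be chosen inside the IP-set so that $(S-l_1)\cap(S-l_2)$ still has upper Banach density at least $\tfrac14(BD^*(S))^4$. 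This is what lets a single independence-realizing point $y$ simultaneously satisfy $T^{q_1+n}y\in U_1$ and $T^{q_2+n}y\in U_2$ for a Banach-dense set of $n$; approximating $T^qy$ by $T^{m_t}x$ then transfers the $\delta$-separation to the pair $(T^{l_1}x,T^{l_2}x)$ on a set of upper Banach density bounded below independently of the window. Without this intersection lemma (or a substitute for it), your argument cannot produce two points of $U$ whose orbits are forced into opposite sets on a common set of positive upper Banach density, and the proof does not close.
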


To prove the above theorem we need some preparation. The following result is implicit in~\cite{Gi36},
see also \cite[Proposition 5.8]{HLY} for this version.
\begin{prop}
Let $(X,\mathcal{B},\mu)$ be a probability space, and
$\{E_i\}_{i=1}^\infty$ be a sequence of measurable sets with $\mu(E_i)\geq a>0$
for some constant $a$ and any $i\in\N$.
Then for any $k\geq 1$ and $\ep>0$, there is $N=N(a,k,\ep)$ such that
for any tuple $\{s_1<s_2<\dotsb<s_n\}$ with $n\geq N$ there exist $1\leq t_1<t_2<\dotsb<t_k\leq n$
with
\[\mu(E_{s_{t_1}}\cap E_{s_{t_2}}\cap\dotsb\cap E_{s_{t_k}})\geq a^k-\ep.\]
\end{prop}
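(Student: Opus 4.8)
The plan is to reduce this measure-theoretic statement to a purely asymptotic counting estimate by integrating a single auxiliary function and then invoking convexity. First I would fix the tuple $\{s_1<\dotsb<s_n\}$ and set $f=\sum_{j=1}^{n}\mathbf{1}_{E_{s_j}}$, a nonnegative integer-valued function on $X$. Since $\mu$ is a probability measure and $t\mapsto t^k$ is convex on $[0,\infty)$, Jensen's inequality gives $\int f^k\,d\mu\ge\bigl(\int f\,d\mu\bigr)^k$; and because each $\mu(E_{s_j})\ge a$ we have $\int f\,d\mu\ge na$. Combining these yields $\int f^k\,d\mu\ge (na)^k$.

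The second step is to expand $f^k$ combinatorially. Writing $f^k=\sum_{\mathbf{j}\in\{1,\dotsc,n\}^k}\mathbf{1}_{E_{s_{j_1}}\cap\dotsb\cap E_{s_{j_k}}}$ and integrating term by term, $\int f^k\,d\mu$ equals the sum over all ordered $k$-tuples $\mathbf{j}$ of the intersection measures $\mu(E_{s_{j_1}}\cap\dotsb\cap E_{s_{j_k}})$. I would separate the tuples with pairwise distinct entries from those containing a repetition. The number of repeated-index tuples is $n^k-n(n-1)\cdots(n-k+1)$, a polynomial in $n$ of degree $k-1$, so (using $\mu(\cdot)\le 1$) their total contribution is at most $C_k n^{k-1}$ for a constant $C_k$ depending only on $k$. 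Each unordered $k$-subset $\{t_1<\dotsb<t_k\}$ arises from exactly $k!$ ordered tuples with the same intersection, so setting $S=\sum_{t_1<\dotsb<t_k}\mu(E_{s_{t_1}}\cap\dotsb\cap E_{s_{t_k}})$ gives $k!\,S\ge (na)^k-C_k n^{k-1}$.

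The final step is an averaging argument. Dividing by $\binom{n}{k}$ shows that the maximum of $\mu(E_{s_{t_1}}\cap\dotsb\cap E_{s_{t_k}})$ over all $k$-subsets is at least the average
\[
\frac{S}{\binom{n}{k}}\ge\frac{(na)^k-C_k n^{k-1}}{n(n-1)\cdots(n-k+1)}.
\]
As $n\to\infty$ the right-hand side tends to $a^k$, since the numerator is asymptotic to $a^k n^k$ and the denominator to $n^k$. Crucially this lower bound is a function of $a$, $k$ and $n$ alone, independent of the particular sets $E_i$ and of the indices $s_i$; hence I can choose $N=N(a,k,\ep)$ so that the displayed quantity exceeds $a^k-\ep$ for every $n\ge N$. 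For such $n$, some $k$-subset realizes a value at least the average, giving the required indices $t_1<\dotsb<t_k$.

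I expect the only delicate point to be the bookkeeping in the second step: confirming that the repeated-index tuples contribute only $O(n^{k-1})$ and, above all, verifying that the resulting threshold $N$ depends on $a,k,\ep$ alone and not on the configuration of the sets. Everything else reduces to Jensen's inequality together with the elementary observation that a maximum dominates an average.
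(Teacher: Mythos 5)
Your argument is correct. The paper does not prove this proposition at all---it cites it from Gillis~\cite{Gi36} and \cite[Proposition 5.8]{HLY}---and your proof (Jensen's inequality applied to $f=\sum_j\mathbf{1}_{E_{s_j}}$, expansion of $\int f^k\,d\mu$ into ordered $k$-tuples, discarding the $O(n^{k-1})$ repeated-index terms, and comparing the maximum over $k$-subsets to the average) is precisely the standard proof of that cited result, with the threshold $N$ visibly depending only on $a$, $k$, $\ep$ as required.
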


By the well-known Furstenberg corresponding principle~\cite{F81}, we have

\begin{prop}\label{prop:BD-delta}
Let $S$ be a subset of $\Z$ with $BD^*(S)>0$.
Then for any $k\geq 1$ and $\ep>0$, there is $N=N(a,k,\ep)$ such that
for any tuple $\{s_1<s_2<\dotsb<s_n\}$ with $n\geq N$ there exist $1\leq t_1<t_2<\dotsb<t_k\leq n$
with
\[BD^*((S-s_{t_1})\cap (S-s_{t_2})\cap\dotsb\cap (S-s_{t_k}))\geq (BD^*(S))^k-\ep.\]
\end{prop}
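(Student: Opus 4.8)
The plan is to deduce Proposition~\ref{prop:BD-delta} from the preceding measure-theoretic statement (call it the combinatorial intersection lemma) by applying the Furstenberg correspondence principle, which converts a positive-upper-Banach-density subset of $\Z$ into a dynamical system carrying an invariant measure so that density statements about $S$ translate into measure statements about return-time sets. First I would recall the precise form of the correspondence: given $S\subset\Z$ with $BD^*(S)>0$, one forms the orbit closure $X$ of the indicator $\mathbf{1}_S\in\{0,1\}^\Z$ under the shift $\sigma$, together with the clopen cylinder $A=\{\omega\in X:\omega(0)=1\}$, and produces a shift-invariant Borel probability measure $\mu$ on $X$ with $\mu(A)=BD^*(S)$, chosen along a sequence of intervals $[M_j,N_j]$ with $N_j-M_j\to\infty$ realizing the upper Banach density. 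The key translation is that for any integer $s$ one has $\sigma^{-s}A=\{\omega:\omega(s)=1\}$, and $\mu\big(\bigcap_{j}\sigma^{-s_{t_j}}A\big)$ is exactly the normalized upper Banach density of $\bigcap_j(S-s_{t_j})$ along the same averaging sequence, which is bounded above by $BD^*\big(\bigcap_j(S-s_{t_j})\big)$.

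With the correspondence in hand, the argument is a direct substitution. I would set $a=BD^*(S)$ and apply the intersection lemma to the single measurable set $E:=A$, taking $E_i=\sigma^{-i}A$ for $i\in\N$ (or more directly to the constant sequence $E_i=A$ reindexed appropriately), so that each $E_i$ has measure $\mu(A)=a>0$. The lemma then supplies, for any $k\geq1$ and $\ep>0$, a threshold $N=N(a,k,\ep)$ such that for every increasing tuple $\{s_1<\dotsb<s_n\}$ with $n\geq N$ there are indices $1\leq t_1<\dotsb<t_k\leq n$ with $\mu\big(E_{s_{t_1}}\cap\dotsb\cap E_{s_{t_k}}\big)\geq a^k-\ep$. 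Interpreting $E_{s}=\sigma^{-s}A$ and using the density translation above yields
\[
BD^*\big((S-s_{t_1})\cap\dotsb\cap (S-s_{t_k})\big)\;\geq\;\mu\big(\sigma^{-s_{t_1}}A\cap\dotsb\cap\sigma^{-s_{t_k}}A\big)\;\geq\;a^k-\ep,
\]
which is exactly the claimed bound with $a=BD^*(S)$.

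The main obstacle, and the only point demanding care, is making the density-to-measure translation quantitatively correct rather than merely qualitatively. The subtlety is that the invariant measure $\mu$ is constructed as a weak-$*$ limit of empirical measures along a \emph{fixed} averaging sequence $[M_j,N_j]$ realizing $BD^*(S)$; one must verify that for the \emph{finite} intersection $\bigcap_j\sigma^{-s_{t_j}}A$, whose indicator is a continuous (cylinder) function, the value $\mu\big(\bigcap_j\sigma^{-s_{t_j}}A\big)$ genuinely equals the limiting frequency of the set $\bigcap_j(S-s_{t_j})$ along that sequence, and that this limiting frequency is dominated by the upper Banach density of the intersection set. Because $A$ is clopen, its indicator is continuous and the weak-$*$ convergence gives exact evaluation on such cylinders, so no approximation loss occurs and the shifts $s_{t_j}$ cause no difficulty since finitely many shifts of a cylinder are again a cylinder. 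Once this bookkeeping is recorded, the proposition follows immediately; I would therefore devote the bulk of the write-up to stating the correspondence cleanly and verifying $\mu(A)=BD^*(S)$ together with the cylinder-evaluation identity, after which the application of the intersection lemma is purely formal.
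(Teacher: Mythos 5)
Your proposal is correct and follows exactly the route the paper takes: the paper derives Proposition~\ref{prop:BD-delta} from the preceding Gillis-type intersection lemma by a bare citation of the Furstenberg correspondence principle, and your write-up simply supplies the standard details (orbit closure of $\mathbf{1}_S$, the clopen cylinder $A$ with $\mu(A)=BD^*(S)$ along a realizing sequence of intervals, invariance of the weak-$*$ limit, and the inequality $\mu\bigl(\bigcap_j\sigma^{-s_{t_j}}A\bigr)\leq BD^*\bigl(\bigcap_j(S-s_{t_j})\bigr)$ for clopen sets). The quantitative bookkeeping you flag is handled correctly, so nothing further is needed.
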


\begin{lem} \label{lem:pubd-ip}
If $S$ has positive upper Banach density and $W$ is an IP-set,
then there are $q,q_1,q_2\in S$ and
$l_1,l_2\in W$ with $l_i=q_i-q$ for each $i=1,2$ and
\[BD^*((S-l_1)\cap(S-l_2))\geq \frac 14(BD^*(S))^4.\]
\end{lem}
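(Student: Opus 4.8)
The plan is to derive the configuration from the combinatorial recurrence in Proposition~\ref{prop:BD-delta} applied twice, using the IP structure of $W$ to guarantee that both shifts $l_1,l_2$ land in $W$. Write $a=BD^*(S)>0$ and let $\{p_i\}$ be a sequence of positive integers witnessing that $W$ is an IP-set, so that every finite sum $p_{i_1}+\dotsb+p_{i_r}$ with $i_1<\dotsb<i_r$ lies in $W$. The key device is to feed Proposition~\ref{prop:BD-delta} not an arbitrary tuple but the tuple of partial sums $s_j=p_1+\dotsb+p_j$: then any difference $s_{t'}-s_t=p_{t+1}+\dotsb+p_{t'}$ (for $t<t'$) is a sum of distinct generators, hence automatically an element of $W$.

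First I would run Proposition~\ref{prop:BD-delta} with $k=2$ and a small $\ep_1$ on the tuple $s_1<s_2<\dotsb<s_n$ for $n\geq N(a,2,\ep_1)$. This produces indices $t_1<t_2$ with $BD^*((S-s_{t_1})\cap(S-s_{t_2}))\geq a^2-\ep_1$. Since $BD^*$ is translation invariant, setting $l_1=s_{t_2}-s_{t_1}\in W$ gives $BD^*(S_1)\geq a^2-\ep_1$ for $S_1:=S\cap(S-l_1)$. I would then apply Proposition~\ref{prop:BD-delta} a second time, now to the set $S_1$ (which still has positive upper Banach density when $\ep_1<a^2$), with $k=2$ and a small $\ep_2$, using a partial-sum tuple built from generators $p_i$ of sufficiently large index. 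This yields a shift $l_2\in W$ with
\[ BD^*(S_1\cap(S_1-l_2))\geq (BD^*(S_1))^2-\ep_2\geq (a^2-\ep_1)^2-\ep_2. \]
Choosing $\ep_1,\ep_2$ small enough that $(a^2-\ep_1)^2-\ep_2\geq \tfrac14 a^4$ completes the density estimate, since the right-hand side tends to $a^4>\tfrac14 a^4$ as $\ep_1,\ep_2\to 0$.

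To read off the desired points, note the set identity
\[ S_1\cap(S_1-l_2)=S\cap(S-l_1)\cap(S-l_2)\cap(S-l_1-l_2)\subset (S-l_1)\cap(S-l_2), \]
so $BD^*((S-l_1)\cap(S-l_2))\geq BD^*(S_1\cap(S_1-l_2))\geq \tfrac14 a^4$, which is the claimed bound. As this set has positive upper Banach density it is nonempty, so I may pick $q\in S_1\cap(S_1-l_2)$; then $q\in S$, $q+l_1\in S$ and $q+l_2\in S$, and putting $q_i=q+l_i$ yields $q,q_1,q_2\in S$ with $l_i=q_i-q\in W$, as required.

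The part requiring care is the simultaneous membership $l_1,l_2\in W$: an unrestricted use of Proposition~\ref{prop:BD-delta} would produce shifts with no reason to lie in $W$. This is exactly what the partial-sum tuples fix, since a difference of partial sums of $\{p_i\}$ is an IP-sum. The only remaining bookkeeping is to take the generators in the second application with indices beyond those used in the first, which forces $l_2\neq l_1$ (hence $q_1\neq q_2$) while keeping $l_2\in W$; and the loss from $a^2$ to $(a^2-\ep_1)^2$ together with the two error terms is comfortably absorbed by the generous factor $\tfrac14$ appearing in the statement.
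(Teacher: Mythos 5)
Your proof is correct and follows essentially the same route as the paper: both apply Proposition~\ref{prop:BD-delta} twice to tuples of partial sums of the IP generators (the second time with generators of larger index), so that the resulting shifts are IP-sums and hence lie in $W$, and then pick $q$ in the doubly intersected set. The only cosmetic differences are that the paper reports the pair $(r_1, r_1+r_2)$ where you report $(r_1,r_2)$ --- both are valid, since $S\cap(S-r_1)\cap\bigl((S\cap(S-r_1))-r_2\bigr)$ is contained in $(S-l_1)\cap(S-l_2)$ for either choice --- and your explicit handling of the $\ep$'s actually lands the constant $\tfrac14$ more cleanly than the paper's fixed $\tfrac12$-losses do.
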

\begin{proof}
Let $W$ be the IP-set generated by $\{p_i\}_{i=1}^\infty$.
Without loss of generality, assume that $\sum_{i=1}^n p_i<p_{n+1}$.
For the sequence $\{\sum_{i=1}^n p_i\}_{i=1}^\infty$,
by Proposition~\ref{prop:BD-delta}, there exist $n_1<n_2$ such that
\[BD^*((S-\sum_{i=1}^{n_1}p_i)\cap (S-\sum_{i=1}^{n_2}p_i))\geq \frac 12BD^*(S)^2.\]
Let $r_1=\sum_{i=n_1}^{n_2} p_i$ and $S_1=S\cap (S-r_1)$.
Then $r_1\in W$ and $BD^*(S_1)\geq \frac 12BD^*(S)^2$.
Let $W_1$ be the IP-set generated by $\{p_i\}_{i=n_2+1}^\infty$.
Again by Proposition~\ref{prop:BD-delta},
there is $r_2\in W$ such that $S_2=S_1\cap (S_1-r_2)$ satisfying
$BD^*(S_2)\geq \frac 12BD^*(S_1)^2$.
Let $l_1=r_1$ and $l_2=r_1+r_2$. Choose $q\in S_2$.
Then $q_1=q+r_1\in (S_1+r_1)\subset S$,
\[q_2=q+r_1+r_2\in (S_2+r_2)+r_1\subset (S_1+r_1)\subset S\]
and
\[BD^*((S-l_1)\cap(S-l_2))\geq BD^*(S_2) \geq \frac 12 BD^*(S_1)^2 \geq  \frac 14(BD^*(S))^4. \qedhere\]
\end{proof}

The notion of entropy pair was introduced by Blanchard in \cite{B2}.
It is known that if the topological entropy of a dynamical system is
positive then there exist some entropy pairs.
We have the following characterization of entropy pairs.

\begin{prop}[\cite{HY06,KL07}] \label{thm:entropy-pair}
Let $(X,T)$ be a dynamical system.
Then $(x_1,x_2)\in X\times X$ is an entropy pair if and only if for each neighborhood $U_i$ of $x_i$ for $i=1,2$,
there is an independence set of positive density for $(U_1,U_2)$, i.e.
there is a subset $J\subset \Z$ with positive density such that for any non-empty finite subset $I\subset J$,
we have
\[\bigcap_{i\in I}T^{-i}U_{s(i)}\neq\emptyset\]
for any $s\in \{1,2\}^I$.
\end{prop}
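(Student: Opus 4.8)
The plan is to prove both implications of the equivalence, using as the working definition Blanchard's: a pair $(x_1,x_2)$ with $x_1\neq x_2$ is an \emph{entropy pair} if for every pair of disjoint closed neighborhoods $U_1\ni x_1$ and $U_2\ni x_2$ the open cover $\mathcal{U}=\{U_1^c,U_2^c\}$ has positive topological entropy $h(T,\mathcal{U})>0$. (When $x_1=x_2$ both sides fail, so we assume $x_1\neq x_2$.) The easy direction is a direct subcover count, while the reverse direction rests on a density form of the Sauer--Shelah lemma together with the local variational principle.

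For the direction ``independence set of positive density $\Rightarrow$ entropy pair'', fix disjoint closed neighborhoods $U_1,U_2$ and let $J\subset\Z$ be an independence set for $(U_1,U_2)$ with $\overline{D}(J)=d>0$. For each of the infinitely many $n$ with $|J\cap[0,n-1]|\geq \tfrac{d}{2}n$ write $I=J\cap[0,n-1]$. By independence, for each $s\in\{1,2\}^I$ there is a point $z_s$ with $T^iz_s\in U_{s(i)}$ for all $i\in I$. Since $U_1\cap U_2=\emptyset$ forces $U_1\subset U_2^c$ and $U_2\subset U_1^c$, any element $\bigcap_{i=0}^{n-1}T^{-i}V_i$ of the refinement $\bigvee_{i=0}^{n-1}T^{-i}\mathcal{U}$ that contains $z_s$ must have $V_i=U_2^c$ wherever $s(i)=1$ and $V_i=U_1^c$ wherever $s(i)=2$; distinct names $s$ thus require distinct members to be covered, so every subcover has at least $2^{|I|}$ elements. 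Hence $h(T,\mathcal{U})\geq \tfrac{d}{2}\log 2>0$, and $(x_1,x_2)$ is an entropy pair.

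For the reverse direction, suppose $(x_1,x_2)$ is an entropy pair and fix arbitrary neighborhoods $U_1,U_2$; shrinking them I may assume they are disjoint and closed, so that $h(T,\{U_1^c,U_2^c\})>0$. First I would pass to the measure side: by the local variational principle for covers there is an invariant measure $\mu$ with $h_\mu(T,\{U_1^c,U_2^c\})>0$, and in particular $\mu(U_1),\mu(U_2)>0$. The heart of the matter is then a density version of the Sauer--Shelah lemma (equivalently the Karpovsky--Milman inequality): there are constants $c>0$ and $\delta>0$ so that any family $\mathcal{S}\subseteq\{1,2\}^{[0,n-1]}$ with $|\mathcal{S}|\geq e^{cn}$ \emph{shatters} some $I\subseteq[0,n-1]$ with $|I|\geq\delta n$, where shattering $I$ means that every $s\in\{1,2\}^{I}$ is the restriction of some member of $\mathcal{S}$. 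Applying this to the family $\mathcal{S}_n$ of realized $\{U_1,U_2\}$-names of length $n$ along $\mu$-generic orbits---whose exponential growth rate is bounded below by the positive measure entropy---produces finite sets $I_n\subseteq[0,n-1]$ of size at least $\delta n$ that are shattered, and a shattered set is exactly an independence set for $(U_1,U_2)$.

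The remaining, and most delicate, step is to upgrade these finite independence sets, scattered across windows $[0,n-1]$, into a single infinite $J\subset\Z$ of positive density. Here I would follow the combinatorial machinery behind the cited references: one shows that the \emph{independence density} of the pair $(U_1,U_2)$---the asymptotic normalized size of the largest independence set inside long intervals---is positive, using an averaging/quasi-tiling argument of Ornstein--Weiss type to stabilize the density bound $\delta$ across all scales. A compactness argument in $\{0,1\}^{\Z}$, selecting a cluster point of suitably shifted indicators $\mathbf{1}_{I_n}$ and using that being an independence set is a condition on finite subsets that passes to limits, then yields an infinite independence set $J$ of positive density. I expect this extraction to be the main obstacle: the naive concatenation of finite independence sets need not be independent, because a single orbit must realize the combined pattern, so genuine combinatorial work---rather than a mere superadditivity of interval sizes---is required to manufacture and then stabilize the density lower bound.
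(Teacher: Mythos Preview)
The paper does not supply its own proof of this proposition: it is quoted directly from \cite{HY06,KL07} and used as a black box in the proof of Theorem~\ref{thm:pe-bms}. So there is no ``paper's proof'' to compare against; what you have written is an outline of how the argument runs in those cited references, particularly Kerr--Li.

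As an outline your sketch is faithful to the Kerr--Li approach. The easy direction is correct as written: independence over a positive-density $J$ forces at least $2^{|J\cap[0,n-1]|}$ distinct members in any subcover of $\bigvee_{i=0}^{n-1}T^{-i}\{U_1^c,U_2^c\}$, giving $h(T,\mathcal U)>0$. For the hard direction you have the right ingredients---local variational principle to obtain positive combinatorial growth for $(U_1,U_2)$, then a Sauer--Shelah/Karpovsky--Milman type lemma to convert exponential name-counts into shattered (hence independent) sets of linear size in each window.

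You are also right to flag the extraction of a single infinite positive-density independence set as the genuine crux; this is exactly where the work lies in \cite{KL07}. Two comments sharpen what is needed. First, one does not need to pass through an invariant measure at this stage: Kerr--Li work directly with the combinatorial independence density $I(U_1,U_2)=\lim_n \tfrac{1}{n}\varphi(n)$, where $\varphi(n)$ is the maximal size of an independence set inside $[0,n-1]$, show it is subadditive so the limit exists, and prove it is positive precisely when $(x_1,x_2)$ is an entropy pair. Second, the upgrade from ``$\varphi(n)\ge \delta n$ for all $n$'' to ``there exists an infinite independence set of density $\ge \delta'$'' is not a compactness argument on indicators alone---a weak-$*$ limit of $\mathbf 1_{I_n}$ need not be an indicator of an independence set, because independence is witnessed by points of $X$, not just by the combinatorics of $J$. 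The actual mechanism is a hereditary/tiling argument: one shows that independence sets can be chosen consistently along a tower (or via a pigeonhole on translates) so that a nested union gives an infinite $J$ with the required density. If you want to turn this into a self-contained proof rather than a proof plan, that is the step you would need to fill in explicitly, following \cite[\S3]{KL07}.
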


Now we are ready to give a proof of Theorem~\ref{thm:pe-bms}.
We remark that some idea of the proof is
inspired by \cite{YZ}.

\begin{proof}[Proof of Theorem~\ref{thm:pe-bms}]
Since the topological entropy of $(X,T)$ is positive,
there exists an entropy pair $(x_1,x_2)$ in $(X,T)$ \cite{B2}.
Let $U_i$ be a neighborhood of $x_i$ for $i=1,2$ with $d(U_1,U_2)>2\delta$.
Let $S$ be a positive density subset of $\N$ associated with $(U_1,U_2)$ in Proposition~\ref{thm:entropy-pair}.

Let $x\in X$ be a transitive point and $U$ be a neighborhood of $x$.
By Lemma~\ref{lem:rec-ip}, $N(x,U)$ contains an IP-set.
Then by Lemma~\ref{lem:pubd-ip} there are $q,q_1,q_2\in S$ and
$l_1,l_2\in N(x,U)$ with $l_1=q_1-q$, $l_2=q_2-q$ and
\[BD^*((S-q_1)\cap (S-q_2))= BD^*((S-l_1)\cap(S-l_2))\geq \frac 14(BD^*(S))^4.\]

For each $n\in (S-q_1)\cap (S-q_2)$, there exist  $s_{n,1}<s_{n,2}\in S$ such that
$n=s_{n,1}-q_1=s_{n,2}-q_2$.
If $n_1<n_2$, then $s_{n_1,1}<s_{n_2,1}$ and $s_{n_1,2}<s_{n_2,2}$.
So $\{s_{n_1,1},s_{n_1,2}\}\cap \{ s_{n_2,1},s_{n_2,2}\}\neq\emptyset$ if and only if $s_{n_1,2}=s_{n_2,1}$.
Then for every $n\in (S-q_1)\cap (S-q_2)$, there is at most one $m\in  (S-q_1)\cap (S-q_2)$
such that $\{s_{n,1},s_{n,2}\}\cap \{ s_{m,1},s_{m,2}\}\neq\emptyset$.
Let $H$ be a maximal subset of $(S-q_1)\cap (S-q_2)$ with the property that for every $m\neq n\in H$,
$\{s_{n,1},s_{n,2}\}\cap \{ s_{m,1},s_{m,2}\}=\emptyset.$
Then
\[BD^*(H)\geq \frac{1}{2} BD^*((S-q_1)\cap (S-q_2))\geq \frac{1}{8}(BD^*(S))^4.\]
Let $S_1=\{s_{n,1}: n\in H\}$ and $S_2=\{s_{n,2}: n\in H\}$.
Then $S_1\cup S_2\subset S$ and $S_1\cap S_2=\emptyset$.

Since $BD^*(H)\geq \frac{1}{8}(BD^*(S))^4$, we can find integers $M_j, N_j\in\N$ with $N_j-M_j\rightarrow\infty$
as $j\rightarrow\infty$, and
\[
\frac{|H\cap[M_j,N_j-1]|}{N_j-M_j}\geq \frac{1}{8}(BD^*(S))^4-\frac{1}{j}.
\]

Since $S$ is an independent set for $(U_1,U_2)$, for any $j\geq 1$ there is $y\in X$ with $T^qy\in U_1$,
$T^{q_i} y\in U_i$ for $i=1,2$ and
$T^{s_{n,i}}(y)\in U_i$ for $i=1,2$ and $n\in H\cap[M_j,N_j-1]$.
Then $H\cap [M_j,N_j-1]\subset N(T^{q_1}y,U_1)\cap N(T^{q_2}y,U_2)$.

Since $x$ is a transitive point, there is a sequence $\{m_i\}$
with $\lim_{i\to\infty}T^{m_i}x=T^q y$.
Then $T^{l_1}x, T^{l_2}x\in U$ and
\[\lim_{i\to\infty}T^{m_i}(T^{l_1}x)=T^{q_1}y\in U_1, \lim_{i\to\infty}T^{m_i}(T^{l_2}x)=T^{q_2}y\in U_2.\]

Since $T$ is continuous, we can find $\eta>0$ such that if $u,v\in X$ with $d(u,v)<\eta$ then $d(T^ku,T^kv)
<\frac{\delta}{2}$. Then there exists $N\in\N$, such that if $t\geq N$, then $$d(T^{m_t}(T^{l_1}x),T^{q_1}y)
<\eta,d(T^{m_t}(T^{l_2}x),T^{q_2}y)<\eta.$$ For such $m_t$, we have
\[
d(T^{m_t+k}(T^{l_1}x),T^{q_1+k}y)<\frac{\delta}{2},
d(T^{m_t+k}(T^{l_2}x),T^{q_2+k}y)<\frac{\delta}{2},
\]
if $k\in H\cap [M_j,N_j-1]$ since then $T^{q_1+k}y\in U_1$ and $T^{q_2+k}y\in U_2$. From the fact that $d(U_1,U_2)>2\delta$
we have that
\[
d(T^{m_t+k}(T^{l_1}x),T^{m_t+k}(T^{l_2}x))>\delta
\]
and so
\[
\frac{|\{n\in\Z: d(T^n(T^{l_1}x),T^n(T^{l_2}x))>\delta\}\cap[m_t+M_j,m_t+N_j-1]|}{N_j-M_j}\geq \frac{1}{8}(BD^*(S))^4-\frac{1}{j}.
\]

Since $j$ is arbitrary, we have that the set
\[\{n\in\Z: d(T^n(T^{l_1}x),T^n(T^{l_2}x))>\delta\}\]
has upper Banach density no less than $\frac{1}{8}(BD^*(S))^4$,
which implies that $(X,T)$ is Banach mean sensitive by Proposition \ref{property-6}(5).
\end{proof}

A direct consequence of Theorem \ref{thm:pe-bms} is:

\begin{cor}\label{entropyzero}
If $(X,T)$ is almost Banach mean equicontinuous then the topological entropy of $(X,T)$ is zero.
\end{cor}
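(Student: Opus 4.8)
The plan is to deduce the corollary from Theorem~\ref{thm:pe-bms} by contradiction, exploiting the elementary fact that a system possessing a \textbf{BME} point cannot be Banach mean sensitive. The key structural input is that ``almost Banach mean equicontinuous'' builds in both transitivity and the existence of at least one \textbf{BME} point, which is exactly what is needed to feed Theorem~\ref{thm:pe-bms} and then to obstruct its conclusion.

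First I would set up the contradiction. By definition, an \textbf{ABME} system is transitive and admits a \textbf{BME} point $x_0$. Suppose, toward a contradiction, that the topological entropy of $(X,T)$ is positive. Since $(X,T)$ is transitive, Theorem~\ref{thm:pe-bms} applies and yields that $(X,T)$ is Banach mean sensitive; fix a sensitivity constant $\delta>0$ witnessing this.

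The heart of the argument is then to confront the \textbf{BME} property of $x_0$ with Banach mean sensitivity by matching quantifiers. Applying the definition of \textbf{BME} at $x_0$ with $\ep=\delta$, there is $\rho>0$ such that every $y\in B(x_0,\rho)$ satisfies
\[\limsup_{M-N\to\infty}\frac{1}{M-N}\sum_{i=N}^{M-1}d(T^ix_0,T^iy)<\delta.\]
On the other hand, Banach mean sensitivity with constant $\delta$, applied to the point $x_0$ with $\ep=\rho$, produces some $y\in B(x_0,\rho)$ with
\[\limsup_{M-N\to\infty}\frac{1}{M-N}\sum_{i=N}^{M-1}d(T^ix_0,T^iy)>\delta.\]
These two inequalities are incompatible, so the assumption of positive entropy is untenable; hence the topological entropy of $(X,T)$ is zero.

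I do not expect any serious obstacle here: the only delicate point is the clean incompatibility of a \textbf{BME} point with Banach mean sensitivity, and this becomes immediate once one chooses $\ep=\delta$ in the definition of the \textbf{BME} point so that the sensitivity constant and the equicontinuity threshold coincide. As an alternative packaging, one could instead quote the dichotomy of Proposition~\ref{property-6}(5), which states that a transitive system is either \textbf{ABME} or \textbf{BMS}; combined with Theorem~\ref{thm:pe-bms} (positive entropy forces \textbf{BMS}) this gives the result directly, once the mutual exclusivity of the two alternatives has been recorded via the same quantifier-matching observation.
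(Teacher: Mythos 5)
Your proof is correct and follows exactly the route the paper intends: the corollary is stated there as a direct consequence of Theorem~\ref{thm:pe-bms}, with the (unstated) observation that a transitive system with a \textbf{BME} point cannot be Banach mean sensitive, which you verify correctly by the quantifier-matching argument with $\ep=\delta$ and $\ep=\rho$. Nothing further is needed.
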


To end the paper we discuss minimal examples which are Banach mean equicontinuous. It is easy to check
that the Denjoy example or Sturmian minimal systems are minimal Banach equicontinuous.
In~\cite{A59} Auslander gave an example which is minimal mean equicontinuous by modifying an example of Floyd.
We will refer this as the Auslander-Floyd example.
We can show that the Auslander-Floyd example in~\cite{A59} is also Banach mean equicontinuous.
In fact, we can prove a slightly more general result, and
it is easy to see that the Auslander-Floyd example satisfies the condition of the following proposition.
\begin{prop}
Let $(X,T)$ be a dynamical system. If for every $\ep>0$ there exists an open cover
$\{U_1,\dotsc,U_n\}$ of $X$ such that
\begin{enumerate}
  \item $T(U_i)\subset U_{i+1\pmod n}$ for $i=1,\dotsc,n$, and
  \item $\#\{i: \diam(U_i)\geq \varepsilon\}<\varepsilon n$,
\end{enumerate}
then $(X,T)$ is Banach mean equicontinuous.
\end{prop}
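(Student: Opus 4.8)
The plan is to derive Banach mean equicontinuity from the Banach-density reformulation given in Proposition~\ref{property-6}(1): it suffices to prove that for every $\ep>0$ there is a $\delta>0$ such that $d(x,y)<\delta$ forces $d(T^nx,T^ny)<\ep$ for all $n\in\Z$ outside a set of upper Banach density less than $\ep$. So the whole argument reduces to controlling, for nearby points, the Banach density of the set of times at which their orbits separate.

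First I would fix $\ep>0$ and apply the hypothesis \emph{with this same} $\ep$ to obtain an open cover $\{U_1,\dotsc,U_n\}$ of $X$ satisfying (1) and (2). Since $X$ is compact, this finite open cover has a Lebesgue number $\delta>0$. If $d(x,y)<\delta$, then $\{x,y\}$ has diameter less than $\delta$, so both $x$ and $y$ lie in a common cover element $U_i$.

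Next, let $\sigma$ denote the cyclic permutation of $\{1,\dotsc,n\}$ induced by $i\mapsto i+1\pmod n$. Property (1) gives $T^kU_i\subset U_{\sigma^k(i)}$ for every $k\geq 0$, so $T^kx,T^ky\in U_{\sigma^k(i)}$ and hence $d(T^kx,T^ky)\leq\diam(U_{\sigma^k(i)})$. Writing $B=\{j\colon\diam(U_j)\geq\ep\}$, this yields
\[
\{k\in\Z\colon d(T^kx,T^ky)\geq\ep\}\subset\{k\in\Z\colon \sigma^k(i)\in B\}.
\]
The key point is that the set on the right is periodic of period $n$, because $\sigma^n=\mathrm{id}$, and since $k\mapsto\sigma^k(i)$ is a bijection of $\{0,\dotsc,n-1\}$ onto $\{1,\dotsc,n\}$ it contains exactly $\#B$ residues per period; by property (2) we have $\#B<\ep n$. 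A set that is periodic of period $n$ with $\#B$ elements in each period has upper Banach density equal to $\#B/n$ (in any window of length $L$ each residue class appears between $\lfloor L/n\rfloor$ and $\lceil L/n\rceil$ times, so the frequency converges to $\#B/n$ uniformly in the window position). Therefore the exceptional set above has upper Banach density at most $\#B/n<\ep$, which is exactly the condition required by Proposition~\ref{property-6}(1). Letting $\ep$ run over all positive numbers then shows that $(X,T)$ is Banach mean equicontinuous.

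I do not expect a serious obstacle here: once the statement is routed through Proposition~\ref{property-6}(1), the argument is short. The single point that genuinely requires care—and the only place where the cyclic hypothesis (1) is essential beyond giving $d(T^kx,T^ky)\leq\diam(U_{\sigma^k(i)})$—is that the cyclic structure makes the exceptional index set \emph{periodic}, which is what upgrades the naive fraction $\#B/n$ from a bound on upper density to a bound on upper Banach density. This is precisely the reason the hypothesis yields the Banach version and not merely ordinary mean equicontinuity.
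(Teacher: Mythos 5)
Your proof is correct and rests on the same two ingredients as the paper's: a Lebesgue number $\delta$ for the cover, and the observation that the indices $k$ with $\diam(U_{\sigma^k(i)})\geq\ep$ form a periodic set containing fewer than $\ep n$ residues per period, so that every window of length $L$ meets it in at most about $\ep L$ points. The only difference is one of packaging: you convert this count into a statement about the upper Banach density of the separation times and invoke the equivalence in Proposition~\ref{property-6}(1), while the paper performs the same count inline to bound $\frac{1}{M-N}\sum_{i=N}^{M-1}d(T^ix,T^iy)$ directly by $(1+2\diam(X))\ep$ from the definition; both are complete.
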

\begin{proof}
Let $\delta>0$ be a Lebesgue number of the open cover.
For every $x,y\in X$ with $d(x,y)<\delta$, there is $i_0\in\{1,\dotsc,n\}$ such that $x,y\in U_{i_0}$.
Without loss of generality, we assume that $i_0=1$.
For $0<N<M$ and $M-N>n$
\begin{align*}
  \frac{1}{M-N}\sum_{i=N}^{M-1}d(T^ix,T^iy)&\leq \frac{1}{M-N}\sum_{i=N}^{M-1}\diam(U_{i\pmod n})\\
  &<\varepsilon+\frac{\diam(X)}{M-N}\cdot\frac{M-N+n}{n}\cdot \varepsilon n<(1+2\diam(X))\varepsilon.
\end{align*}
Thus $(X,T)$ is Banach mean equicontinuous.
\end{proof}

\section{Further discussions}
By Corollary~\ref{cor:almost-mean-equi-entropy}, there are many  almost mean equicontinuous systems
which have positive entropy. Then by Theorem~\ref{thm:pe-bms} they are not almost Banach mean equicontinuous.
But the following question is still open.

\begin{ques} Is there a minimal system which is mean equicontinuous but not Banach mean equicontinuous?
\end{ques}

We know that a mean equicontinuous system is a proximal extension of its maximal equicontinuous factor.
It would be interesting to know whether the following question has a positive answer.
\begin{ques}
Is a minimal Banach equicontinuous system an almost 1-1 extension of its maximal equicontinuous factor?
\end{ques}

We remark that the same question is asked for minimal mean equicontinuous systems in \cite{A59}.

\medskip
\noindent {\bf Acknowledgments.}
The first author was supported in part by Scientific Research Fund of Shantou University (YR13001),
Guangdong Natural Science Foundation (S2013040014084) and NNSF of China (11326135).
The second and third authors were supported in part by NNSF of China (11371339).
The authors would like to thank Wen Huang, Song Shao for very useful discussions; and Felipe Garc\'ia-Ramos, Benjy Weiss for very useful comments.
The authors would also like to thank the anonymous referee for his/her helpful suggestions concerning this paper.

\bibliographystyle{amsplain}

\begin{thebibliography}{99}
\bibitem{AAB} E. Akin, J. Auslander and K. Berg, \emph{When is a transitive map chaotic?},
Convergence in ergodic theory and probability (Columbus, OH, 1993), 25--40,
Ohio State Univ. Math. Res. Inst. Publ., 5, de Gruyter, Berlin, 1996.

\bibitem{A59} J. Auslander, \emph{Mean-$L$-stable systems},
Illinois J. Math., {\bf 3} (1959), 566--579.

\bibitem{A} J. Auslander, \emph{Minimal Flows and Their Extensions}, North-Holland Publishing Co., Amsterdam, 1988.

\bibitem{AY80} J. Auslander and J. A. Yorke, \emph{Interval maps, factors of maps, and chaos},
Tohoku Math. J., \textbf{32} (1980), no. 2, 177--188.

\bibitem{B2} F. Blanchard, \emph{A disjointness theorem involving
topological entropy}, Bull. de la Soc. Math. de France, {\bf
121}(1993), 465--478.

\bibitem{EG60} R. Ellis and W. H. Gottschalk, \emph{Homomorphisms of transformation groups}, Trans. Amer.
Math. Soc. \textbf{94} (1960), 258--271.

\bibitem{tomasz} T. Downarowicz, \emph{Positive topological entropy implies chaos DC2},
Proc. Amer. Math. Soc., \textbf{142} (2014),  137--149.

\bibitem{F51} S. Fomin, \emph{On dynamical systems with a purely point spectrum}, Dokl. Akad. Nauk SSSR,
vol. {\bf 77} (1951), 29--32 (In Russian).


\bibitem{F81} H. Furstenberg, \emph{Recurrence in Ergodic Theory and Combinatorial Number Theory},
Princeton Univ. Press, Princeton, NJ, 1981.

\bibitem{Felipe} F. Garc\'ia-Ramos, \emph{Weak forms of topological and measure theoretical equicontinuity:
relationships with discrete spectrum and sequence entropy}, arXiv:1402.7327v2[math.DS].

\bibitem{Gi36}  J. Gillis, \emph{Notes on a property of measurable sets}, J. London Math. Soc., \textbf{11} (1936), 139--141.

\bibitem{G07}E. Glasner, \emph{The structure of tame minimal dynamical systems},
Ergod. Th. \& Dynam. Sys., \textbf{27} (2007), 1819--1837.

\bibitem{GM89} S. Glasner and D. Maon, \emph{Rigidity in topological dynamics},
Ergod. Th. \& Dynam. Sys., \textbf{9} (1989), 309--320.

\bibitem{GMU} E. Glasner, M. Megrelishvili and V. Uspenskij, \emph{On metrizable enveloping semigroups}.
Israel J. Math., {\bf 164} (2008), 317--332.

\bibitem{GW93} E.~Glasner and B.~Weiss, \emph{Sensitive dependence on initial conditions},
Nonlinearity, \textbf{6} (1993), no. 6, 1067--1075.

\bibitem{Ha67} F. Hahn, Y. Katznelson, \emph{On the entropy of uniquely ergodic transformations},
Trans. Amer. Math. Soc., {\bf 126} (1967), 335--360.

\bibitem{HLSY03} W. Huang, S. Li, S. Shao and X. Ye,
\emph{Null systems and sequence entropy pairs}, Ergod. Th. \& Dynam. Sys., \textbf{23} (2003), 1505--1523.

\bibitem{HLY} W. Huang, P. Lu and X. Ye, \emph{Measure-theoretical sensitivity and equicontinuity}, Israel J. of
Math., {\bf 183} (2011),  233--283.

\bibitem{HLY13} W. Huang, J. Li and X. Ye, \emph{Stable sets and mean Li-Yore chaos
in positive entropy systems}, Journal of Functional Analysis, \textbf{266} (2014), 3377--3394.

\bibitem{HY02} W. Huang and X. Ye, \emph{Devaney¡¯s chaos or 2-scattering implies Li-Yorke¡¯s chaos},
Topol. Appl., \textbf{117} (2002), 259--272.

\bibitem{HY} W. Huang  and X. Ye, \emph{Minimal sets in
almost equicontinuous systems}, Proc. of the Steklov Inst. of Math.,
{\bf 244} (2004), 280--287.

\bibitem{HY06} W. Huang and X.~Ye, \emph{A local variational relation and applications},
Israel J. Math., \textbf{151} (2006) 237--280.

\bibitem{KL07} D. Kerr and H. Li, \emph{Independence in topological and C$^*$-dynamics},
Math. Ann., \textbf{338} (2007),  869--926.

\bibitem{LT13} J. Li and S. Tu, \emph{On proximality with Banach density one},
J. Math.Anal.Appl. \textbf{416} (2014), 36--51.

\bibitem{OW} D. Ornstein and B. Weiss, \emph{Mean distality and tightness},
Proc. Steklov Inst. Math., \textbf{244} (2004), no.1, 295--302.

\bibitem{O52} J. C. Oxtoby, \emph{Ergodic sets}, Bull. Amer. Math. Soc., {\bf 58} (1952), 116--136.

\bibitem{S82} B. Scarpellini,
\emph{Stability properties of flows with pure point spectrum},
J. London Math. Soc. (2)  {\bf 26} (1982), no. 3, 451--464.

\bibitem{W82} P. Walters, \emph{An introduction to ergodic theory}.
Graduate Texts in Mathematics, 79. Springer-Verlag,
New York-Berlin, 1982.

\bibitem{YZ} X. Ye and R. Zhang, \emph{On sensitivity sets
in topological dynamics}, Nonlinearity, {\bf 21} (2008), 1601--1620.

\end{thebibliography}

\end{document}